\def\CC{\mathcal C}
\def\F{{\mathbf F}}
\def\HH{\mathcal H}
\def\II{\mathcal I}
\def\llambda{{\pmb\lambda}}
\def\MM{\mathcal M}
\def\PP{\mathcal P}
\def\xx{{\mathbf x}}
\def\sss{{\mathbf s}}
\def\Ii{{\mathcal I}}
\def\Llambda{{\boldsymbol \Lambda}}
\def\GL{\operatorname{GL}}
\def\rank{\operatorname{rank}}   %rank
\def\Sp{\operatorname{Sp}}
\newcommand{\abs}[1]{|#1|}      %absolute value
\numberwithin{equation}{section}
\newtheorem{Theorem} {Theorem} [section]
\newtheorem{Lemma} [Theorem] {Lemma}
\theoremstyle{definition}
\newtheorem{Example}[Theorem]{Example}
\newtheorem{Definition}[Theorem]{Definition}
\newtheorem{Remark}[Theorem]{Remark}
\begin{document}
\title
{Incidence Modules for Symplectic Spaces in Characteristic Two}
\author[Chandler, Sin, and Xiang]{David B. Chandler$^{\dagger}$, Peter Sin, Qing Xiang$^{\ddagger}$}

\thanks{$^{\dagger}$This author worked at Institute of Mathematics, Academia Sinica, Taipei,
Taiwan}
\thanks{$^{\ddagger}$Research supported in part by NSF Grant DMS 0701049.}
\address{%Institute of Mathematics, Academia Sinica,
%Nangang, Taipei 11529,  Taiwan} \email{chandler@math.sinica.edu.tw}
6 Georgian Circle, Newark, DE  19711} \email{davidbchandler@gmail.com}
\address{Department of Mathematics, University of Florida, Gainesville, FL 32611,
USA} \email{sin@math.ufl.edu}
\address{Department of Mathematical Sciences, University of Delaware, Newark, DE 19716, USA} \email{xiang@math.udel.edu}

\keywords{Generalized quadrangle, general linear group, $2$-rank,
partial order, symplectic group, symplectic polar space} %\subjclass{ 05E20 (Primary), 20G05 20C11 (Secondary)}
\begin{abstract}
We study the permutation action of a finite symplectic group
of characteristic $2$ on the set of subspaces
of its standard module which are either totally isotropic
or else complementary to totally isotropic subspaces with respect to the
alternating form.
A general formula is obtained for the $2$-rank of the incidence
matrix for the inclusion of one-dimensional subspaces in
the distinguished subspaces of a fixed dimension.
\end{abstract}

\maketitle

%}}}
\section{Introduction}
Let $V$ be a finite-dimensional vector space over $\F_q$, $q=2^t$.
We assume in addition that $V$ has a nonsingular alternating form
$b(\cdot,\cdot)$. Then the dimension of $V$ must be an even number
$2m$.  We fix a basis $e_1,e_2,\ldots,e_m,f_m,\ldots,f_1$ of $V$, with corresponding coordinates $x_1,x_2,\ldots,x_m,y_m,\ldots,y_1$ so that
$b(e_i,f_j)=\delta_{ij},\ b(e_i,e_j)=0,$ and $b(f_i,f_j)=0$, for all $i$ and $j$.

For any subspace $W$ of $V$, let $W^\perp=\{v\in V\mid b(v,w)=0,
\forall\ w\in W\}$ be its complement.  A subspace $W$ is called {\it totally
isotropic} if $b(\xx,\xx')=0$ for any two vectors, $\xx,\xx'\in W$.
 We will be interested in totally isotropic subspaces
of $V$; these necessarily have dimensions $r\leq m$.
Also, we will consider the complements (with respect to the
alternating form) of totally
isotropic subspaces, which of course will have dimensions $r\geq m$.

For $1\leq r\leq 2m-1$, let $\II_r=\II_r(t)$ denote the set of
totally isotropic subspaces, or complements of such, of dimension
$r$. Let $B_{r,1}=B_{r,1}(t)$ denote the $(0,1)$-incidence matrix of
the natural inclusion relation between $\II_1$ and $\II_r$. Its rows
are indexed by $\II_r$ and its columns by $\II_1$, with the entry
corresponding to $Y\in\II_1$ and $X\in\II_r$ equal to $1$ if and
only if $Y$ is contained in $X$. We now state a theorem giving the
{\it $2$-ranks} of the matrices $B_{r,1}$, that is, their ranks when
the entries are considered as elements of $\F_2$. In order to
simplify this and several other statements we will employ the
notational convention that
 $\delta(P)=1$ if a statement $P$ holds, and $\delta(P)=0$ otherwise.

\begin{Theorem}
\label{rankthm}
Let $m\geq 2$ and $1\leq r\leq 2m-1$.
Let $A$ be the $(2m-r)\times(2m-r)$-matrix whose $(i,j)$-entry is
$$a_{i,j}=
{2m \choose 2j-i}-{2m \choose 2j+i+2r-4m-2- 2(m-r)\delta(r\leq m)}.$$
Then
$$
\rank_2 (B_{r,1}(t))= 1+{\rm Trace}(A^t).
$$
\end{Theorem}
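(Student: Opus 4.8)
The plan is to recast the $2$-rank as the dimension of the image of an incidence map between permutation modules in characteristic $2$, and then to identify that image explicitly as a sum of composition factors whose enumeration is governed by a transfer matrix. Since the rank of a $(0,1)$-matrix over $\F_2$ is unchanged by extending the field of coefficients, I would work over $K=\overline{\F}_2$. A crucial simplifying observation is that in characteristic $2$ the form $b$ is alternating, so $b(\xx,\xx)=0$ for every $\xx$; hence every one-dimensional subspace is totally isotropic, $\II_1$ is the \emph{full} point set of $\PG(2m-1,q)$, and the domain of the incidence map is the complete permutation module $K^{\II_1}=K^{\PG(2m-1,q)}$. Writing $\eta_{r,1}\colon K^{\II_1}\to K^{\II_r}$ for the $K$-linear map sending a point to the sum of the distinguished $r$-subspaces through it, I have $\rank_2(B_{r,1}(t))=\dim_K\Im(\eta_{r,1})$.

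The next step uses the known module structure. The $\GL_{2m}(q)$-module $K^{\PG(2m-1,q)}$ is controlled by the work of Bardoe and Sin: its composition factors occur with multiplicity one and are parametrized by $t$-tuples of integers (the base-$2$ digit/Frobenius-twist data) subject to Hamming-weight admissibility constraints, each factor being a twisted tensor product of fundamental-type modules of computable dimension. Restricting the action to $\Sp_{2m}(q)$, I would first recover the image of the \emph{full} point-to-$r$-subspace incidence, which in the $\GL$ setting is the span of the composition factors lying in a bounded degree range and reproduces Hamada's formula as a baseline. The dimensions $\binom{2m}{k}$ appearing in $A$, and especially the primitive differences $\binom{2m}{k}-\binom{2m}{k-2}$, are precisely the dimensions of the exterior powers $\Lambda^k V$ and of the fundamental symplectic modules $\Lambda^k_0 V$; this difference-of-binomials shape is the fingerprint of restriction from type $A$ to type $C$ and is what I would track to locate the factors surviving in the symplectic image.

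The genuinely new feature, relative to the general linear case, is that $\II_r$ contains only the distinguished subspaces (totally isotropic ones when $r\le m$, complements of totally isotropic ones when $r\ge m$), so I must cut the $\GL$-image down to the $\Sp_{2m}(q)$-submodule actually hit by $\eta_{r,1}$. Once the surviving composition factors are identified, they should be in bijection with admissible $t$-tuples whose consecutive entries $(s_i,s_{i+1})$ obey a fixed transition rule, read \emph{cyclically} because the Frobenius twist permutes the digits cyclically. This cyclic structure turns the enumeration into $\Tr(A^t)$, where $A$ is the $(2m-r)\times(2m-r)$ transfer matrix recording the admissible transitions; the entry $a_{i,j}$ counts the factors compatible with passing from digit level $i$ to $j$ and evaluates to the stated difference of binomials, with the shift $-2(m-r)\delta(r\le m)$ encoding the dichotomy between the totally isotropic and complementary regimes. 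The isolated $+1$ accounts for the trivial factor of constant functions, which always lies in $\Im(\eta_{r,1})$ and sits outside the transfer-matrix count.

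The principal obstacle is the symplectic reduction described in the previous paragraph: pinning down exactly which of the multiplicity-one $\GL$-composition factors of $K^{\PG(2m-1,q)}$ remain in $\Im(\eta_{r,1})$ after the rows are restricted to the distinguished subspaces. This requires control of the restrictions $L(\lambda)|_{\Sp_{2m}(q)}$ and of how the form $b$ constrains the relevant monomial and weight data, together with careful handling of the boundary case $r=m$ and of the self-dual behaviour under $W\mapsto W^\perp$. After the admissible transitions are determined, checking that their cyclic enumeration equals $\Tr(A^t)$ and that $a_{i,j}$ collapses to the closed binomial form is a bookkeeping computation, which I would carry out by matching generating functions for the digit transitions against the binomial differences.
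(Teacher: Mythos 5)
Your overall architecture matches the paper's: the isolated $+1$ for the constants, the cyclic transfer-matrix enumeration over the Bardoe--Sin digit tuples ($\HH$-types), and the recognition that the binomial differences are dimensions of symplectic modules are all the right picture, and the observation that $\II_1$ is the full point set is correct and is used implicitly throughout the paper. But the proposal leaves the central step --- which you yourself flag as ``the principal obstacle'' --- unresolved, and the way you frame it points in a direction that cannot work. You propose to determine ``which of the multiplicity-one $\GL$-composition factors remain'' in the image. The actual answer is not a selection of composition factors: for each $\HH$-type $(s_0,\ldots,s_{t-1})\leq(2m-r,\ldots,2m-r)$, what survives in $\CC_r$ is a \emph{proper part} of the twisted tensor product $S^{\lambda_0}\otimes (S^{\lambda_1})^{(2)}\otimes\cdots$, namely the product of the terms $S^{\lambda_j}_{\ell_j}$ of a filtration of each exterior power $S^{\lambda_j}$ by $\Sp(V)$-submodules (a Weyl filtration), with level $\ell_j=(m-r)\delta(r\leq m)+(2m-r-s_j)$ depending on both $r$ and the digit $s_j$. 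The entry $a_{i,j}$ is $\dim S^{\lambda}_{\ell}=\binom{2m}{\lambda}-\binom{2m}{\lambda-2\ell-2}$ for the appropriate $(\lambda,\ell)$, not the dimension of a composition factor; only the bottom term $\ell=0$ is the fundamental module $\binom{2m}{\lambda}-\binom{2m}{\lambda-2}$ you mention. Since $k[\PP]$ restricted to $\Sp(V)$ is far from semisimple, a list of surviving composition factors would not determine the submodule or its dimension in any case.

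What is missing, concretely, is the machinery that makes this precise and provable: the classes $(\rho,\sigma,\tau,\upsilon)$ of square-free polynomials built from the paired products $W_i=X_iY_i$, the resulting level filtration $S^\lambda_0\subseteq\cdots\subseteq S^\lambda_{\lfloor\lambda/2\rfloor}$ with its dimension count, the basis of $r$-admissible symplectic basis functions, and above all the generation argument (Lemmas~\ref{permutesubscripts}--\ref{HH-down} of the paper) showing by induction on $\HH$-types, using explicit group-ring operators (averaged transvections, the shift operators $g(j)$, and digit projectors), that every $r$-admissible SBF actually lies in $\CC_r$. Your fallback of invoking ``control of the restrictions $L(\lambda)|_{\Sp_{2m}(q)}$'' is precisely the cohomological information the authors point out is unavailable beyond $m=2$; without a substitute for the explicit basis-and-generation argument, the proposal does not close the gap between the plausible transfer-matrix formula and a proof of it.
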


The significance of the entries $a_{i,j}$ is that they are the dimensions of
certain representations of the symplectic group $\Sp(V)$ which
are restrictions of representations of the algebraic group $\Sp(2m,\overline{\F}_q)$,
where $\overline{\F}_q$ is an algebraic closure of $\F_q$.

\begin{Example}
When $m=r=2$, the matrix $A$ is
$$
\left(
\begin{array}{cc}
4 & 4\\
1 & 5 \end{array}\right),
$$
whose eigenvalues are $\frac{9\pm\sqrt{17}}{2}=(\frac{1\pm\sqrt{17}}{2})^2$.
Thus,
$$
\rank_2(B_{2,1}(t))=1+\left(\frac{1+\sqrt{17}}{2}\right)^{2t}+\left(
\frac{1-\sqrt{17}}{2}\right)^{2t}.
$$

This formula was previously proved in \cite{sastry-sin} by using
very detailed information about the extensions of simple modules for
$\Sp(4,q)$; such cohomological information is unavailable when $V$
has higher dimension. Moreover, in \cite{sastry-sin} the numbers
$\frac{1\pm\sqrt{17}}{2}$ arise in a rather mysterious fashion from
the combinatorics involved. Theorem~\ref{rankthm}, whose proof does
not depend on \cite{sastry-sin}, tells us that squares of these
numbers are the eigenvalues of a matrix whose entries are the
dimensions of modules for $\Sp(4,q)$. This explanation is more
natural.
\end{Example}

We denote by $k[\PP]$ the space of functions from $\PP=\II_1$ to $k=\F_q$.
The action of $\Sp(V)$ on $\PP$ makes $k[\PP]$ into a permutation module.
Since $\Sp(V)$ acts transitively on $\II_r$,
the $2$-rank of $B_{r,1}$ is equal to the dimension of the
$k\Sp(V)$-submodule of $k[\PP]$ generated by the characteristic
function of one element of $\II_r$. Let us denote this
submodule by  $\CC_r$.
%added by peter; I think it helps the reader keep track.
Since the sum of all the characteristic functions of totally isotropic
$r$-subspaces contained in a given totally  isotropic $(r+1)$-subspace
is equal to the characteristic function of the latter, with a similar
relation for complements of totally isotropic subspaces, we have

\begin{equation}\label{Cnested}
\mathcal \CC_{2m-1}\subset\cdots\subset \mathcal \CC_{2}\subset \mathcal
\CC_{1}=k[\PP].
\end{equation}

Functions in $k[\PP]$ can be written as polynomials in the coordinates.
For example, the characteristic function of the isotropic subspace $W_0$
defined by $x_1=0$, $x_2=0$, \ldots, $x_m=0$ is
\begin{equation}
\label{fav_subspace}
\chi_{{}_{W_0}}=\prod_{i=1}^m(1-x_i^{q-1}).
\end{equation}
Theorem~\ref{rankthm} is proved by relating this polynomial description
to the actions of the groups $\GL(V)$ and $\Sp(V)$ on $k[\PP]$.
We deduce it as a numerical corollary of  Theorem~\ref{basis_thm}
which gives a basis of $\CC_{r}$ of the right size.

When $q=p^t$ is odd, the $p$-rank of the corresponding incidence
matrices has been computed in all dimensions in \cite{CSX2}. It was
noted in \cite{CSX2} that for $m=2$ and $r=2$, there is a single
formula in $p$ and $t$ which gives the rank for both even and odd
values of $q$, even though there is no uniform proof. This
circumstance is somewhat coincidental, however, since the examples
in Section~\ref{examples} show that for $m=r=3$, the ranks are not
given
 by the same function of $p$ and $t$ for the even characteristic and odd characteristic cases.
The even-odd distinction
reflects some fundamental differences in the $k\Sp(V)$-submodule
structures of $k[\PP]$ between the even and odd cases,
which are apparent in  the action of
$\Sp(V)$ on the graded pieces of the truncated polynomial algebra.
In a sense which will become clear, these modules are the basic
building blocks of $k[\PP]$.
When $p$ is odd, the action of $\Sp(V)$ on each homogeneous piece of
the truncated polynomial algebra is semisimple, but for $p=2$,
the submodule structure is much richer. In characteristic $2$
the truncated polynomial algebra is an exterior algebra.
The scalar extensions of the exterior powers to the algebraic closure
$\overline k$ are examples of {\it tilting modules} for the algebraic
group $\Sp(V\otimes\overline k)$, as described in \cite{donkin};
these have filtrations by  {\it Weyl modules}  and their duals. (See \cite{jantzen_book} for definitions.)
These filtrations are very important in our work; indeed
the entries of the matrix $A$ in Theorem~\ref{rankthm} are the dimensions
of submodules which appear as terms in them.
In order to keep prerequisites to a minimum, our treatment of filtrations
of exterior powers in Section~\ref{wedges} is self-contained,  requiring no general facts
about Weyl modules, but sufficient to define the filtrations
and compute the dimensions of the subquotients.

The exterior powers are related  to $k[\PP]$ through its
$k\GL(V)$-module structure. The $k\GL(V)$-submodule lattice of
$k[\PP]$ has a simple description \cite{bardoe-sin} in terms of a
partially ordered set $\HH$ of certain $t$-tuples of natural
numbers. Each $t$-tuple in $\HH\cup\{(0,0,\ldots ,0)\}$ corresponds
to a $\GL(V)$ composition factor of $k[\PP]$ and each composition
factor is isomorphic to the $t$-fold twisted tensor product of
exterior powers. These items are discussed in detail in
Section~\ref{GL_structure}.

In Section~\ref{sbf} we shall define a set of polynomials which maps
bijectively to a basis of $k[\PP]$. It is from
this special basis, whose elements will be called {\it symplectic
basis  functions} (SBFs),
 that we will find subsets which give bases of the incidence
submodules $\CC_{r}$ for all $r$. Each SBF is a $t$-fold product
\begin{equation}
\label{factorize}
f=f_0f_1^2\cdots f_{t-1}^{2^{t-1}},
\end{equation}
where each ``digit'' $f_j$ is the function from $V$ to $k$ induced
by a homogeneous square-free polynomial. Homogeneous square-free
polynomials correspond bijectively to elements of the exterior power
of the same degree, and the factorization (\ref{factorize}) is
compatible with the twisted tensor product factorization mentioned
above. Thus each of these functions has a well-defined $\HH$-type.
We also introduce the notions of the {\it class} and {\it level} of
each ``digit'' to further subdivide our set of special functions.
Since the  class and level are defined for each digit they apply
also to elements of exterior powers. In Section~\ref{wedges}, we
show that the filtration of the exterior powers by levels consists
of $k\Sp(V)$-submodules, and compute the dimensions of the
subquotients.

The proof of Theorem~\ref{basis_thm} is then given in a series of
lemmas in Section~\ref{admiss_bases}.  A modified version of the
proof is given in Section~\ref{q=2} to handle the $q=2$ case.
Theorem~\ref{basis_thm} tells us that a basis of $\CC_{r}$ consists
of all those SBFs of certain $\HH$-types and whose digits satisfy
certain conditions on their levels.  Thus, using the dimension
computations of Section~\ref{wedges}, we obtain
Theorem~\ref{rankthm} as a corollary of Theorem~\ref{basis_thm}.

%%%%%%%%%%%%%%%%%%%%%%%%%%%%%%%%%%%%%%%%%%%%%%%%%%%%%%%%%%%%%%%%%%%%%%%%%%%%%%%%

\section{$k[\PP]$ as a permutation module for $\GL(V)$}\label{GL_structure}

In this section, $V$ is a $2m$-dimensional vector space over
$k=\F_q$, $q=2^t$. For the time being, we do not equip $V$ with an
alternating form. So we simply consider $k[\PP]$ as a
$k\GL(V)$-module. Let $k[X_1, X_2,\ldots,X_{2m}]$ denote the
polynomial ring, in $2m$ indeterminates. Since every function on $V$
is given by a polynomial in the $2m$ coordinates $x_i$, the map
$X_i\mapsto x_i$ defines a surjective $k$-algebra homomorphism
$k[X_1, X_2,\ldots,X_{2m}]\rightarrow k[V]$, with kernel generated
by the elements $X_i^q-X_i$. Furthermore, this map is simply the
coordinate description  of the following  canonical map. The
polynomial ring is isomorphic  to the symmetric algebra $S(V^*)$ of
the dual space of $V$; so we have a natural evaluation map
$S(V^*)\rightarrow k[V]$. This canonical description makes it clear
that the map is equivariant with respect to the natural actions of
$\GL(V)$ on these spaces. A basis for $k[V]$ is obtained by taking
monomials in $2m$ coordinates $x_i$ such that the degree in each
variable is at most $q-1$. We will call these the {\it basis
monomials} of $k[V]$. Noting that the functions on $V\setminus\{0\}$
which descend to $\PP$ are precisely those which are invariant under
scalar multiplication by $k^*$, we obtain from the monomial basis of
$k[V]$ a basis of $k[\PP]$,
$$
\{\prod_{i=1}^{2m}x_i^{b_i}\mid 0\leq b_i\leq q-1,
\sum_{i}b_i\equiv 0\;({\rm mod}\;q-1),\;(b_1,\ldots ,b_{2m})\neq
(q-1,\ldots ,q-1)\}.
$$
We refer the elements of the above set as
{\it the basis monomials} of $k[\PP]$.

\subsection{Types and $\HH$-types}\label{HHtypes}
We recall the definitions of two $t$-tuples associated
with each basis monomial. Let
\begin{equation}\label{monomial}
f=\prod_{i=1}^{2m}x_i^{b_i}=\prod_{j=0}^{t-1}\prod_{i=1}^{2m}(x_i^{a_{ij}})^{2^j},
\end{equation}
be a basis monomial of $k[\PP]$, where
$b_i=\sum_{j=0}^{t-1}a_{ij}2^j$ and $0\leq a_{ij}\leq 1$. Let
$\lambda_j=\sum_{i=1}^{2m}a_{ij}$. The $t$-tuple
$\llambda=(\lambda_0,\ldots,\lambda_{t-1})$ is called the {\it type}
of $f$. The set of all types of monomials is denoted by $\Llambda$.

In \cite{bardoe-sin}, there is another $t$-tuple associated with
each basis monomial of $k[\PP]$, which we will call its
\emph{$\HH$-type.} This tuple will lie in the set
$\HH\cup\{(0,0,\ldots 0)\}$, where
\begin{equation*}
\HH=\lbrace \sss=(s_0,s_1,\dots,s_{t-1})\mid \forall j, 1\leq
s_j\leq 2m-1,\ 0\leq 2s_{j+1}-s_j \leq 2m \rbrace.
\end{equation*}
The $\HH$-type ${\bf s}$ of $f$ is uniquely determined by the type
via the equations
\begin{equation*}
\lambda_j=2s_{j+1}-s_j, \quad 0\leq j\leq t-1,
\end{equation*}
where the subscripts are taken modulo $t$. Moreover, these equations
determine a bijection between the set $\Llambda$ of types of basis
monomials of $k[\PP]$ and the set $\HH\cup\{(0,0,\ldots 0)\}$. We
will consider $\HH$ as a partially ordered set under the natural
order induced by the product order on $t$-tuples of natural numbers.

\subsection{Composition factors}
The types, or equivalently the $\HH$-types parametrize the
composition factors of $k[\PP]$ in the following sense. The
$k\GL(V)$-module $k[\PP]$ is multiplicity-free. We can associate to
each $\HH$-type $\sss\in\HH\cup\{(0,0,\ldots 0)\}$
a composition factor, which we shall
denote by $L(\sss)$, such that these simple modules are all
nonisomorphic, with $L((0,0,\ldots ,0))\cong k$. The simple modules
$L(\sss)$, $\sss\in\HH$, occur as subquotients of $k[\PP]$ in the
following way. For each $\sss\in\HH$, we let $Y(\sss)$ be the
subspace spanned by monomials of $\HH$-types in
$\HH_\sss=\{\sss'\in\HH\mid \sss'\leq\sss\}$, and $Y(\sss)$ has a
unique simple quotient, isomorphic to $L(\sss)$.

%peter simplified the next para for char 2
The isomorphism type of the simple module $L(\sss)$ is most easily
described in terms of the corresponding type
$(\lambda_0,\ldots,\lambda_{t-1})\in \Llambda$. Let $S^\lambda$ be
the degree $\lambda$ component in the truncated polynomial ring
$k[X_1, X_2,\ldots,X_{2m}]/(X_i^2; 1\leq i\leq 2m)$. Here $\lambda$
ranges from $0$ to $2m$.  The truncated polynomial ring is
isomorphic to the exterior algebra $\wedge(V^*)$; so the dimension
of $S^\lambda$ is ${2m\choose \lambda}$. The simple module $L(\sss)$
is isomorphic to the twisted tensor
 product
 \begin{equation}\label{tensorproduct}
 S^{\lambda_0}\otimes(S^{\lambda_1})^{(2)}\otimes\cdots\otimes
 (S^{\lambda_{t-1}})^{(2^{t-1})}.
 \end{equation}

\subsection{Submodule structure}\label{substruct}
The reason for considering $\HH$-types is that they allow a simple
description of the submodule structure of the $k\GL(V)$-module
$k[\PP]$. The space $k[\PP]$ has a $k\GL(V)$-decomposition
\begin{equation}\label{splittingofk[P]}
k[\PP]=k\oplus Y_{\PP},
\end{equation}
where $Y_{\PP}$ is the kernel of the map $k[\PP]\rightarrow k$,
$f\mapsto |\PP|^{-1}\sum_{Q\in \PP}f(Q)$. The $k\GL(V)$-module
$Y_{\PP}$ is an indecomposable module whose composition factors are
parametrized by $\HH$. Then \cite[Theorem A]{bardoe-sin} states that
given any $k\GL(V)$-submodule of $Y_{\PP}$, the set of $\HH$-types
of its composition factors is an ideal in the partially ordered set
$\HH$ and that this correspondence is an order isomorphism from the
submodule lattice of $Y_{\PP}$ to the lattice of ideals in $\HH$.

The submodules of $Y_{\PP}$ can also be described in terms of basis
monomials \cite[Theorem B]{bardoe-sin}. Any submodule of $Y_{\PP}$
has a basis consisting of the basis monomials which it contains.
Moreover, the $\HH$-types of these basis monomials are precisely the
$\HH$-types of the composition factors of the submodule.
Furthermore, in any composition series, the images of the monomials
of a fixed $\HH$-type form a basis of the composition factor of that
$\HH$-type.

%%%%%%%%%%%%%%%%%%%%%%%%%%%%%%%%%%%%%%%%%%%%%%%%%%%%%%%%%%%%%%%%%%%%%%%%%%%%%%%%%

\section{symplectic basis functions}
\label{sbf}
We now define a set of square-free homogeneous polynomials of degree $\lambda$
which we will use to construct symplectic basis functions (SBFs).
%An SBF
These square-free homogeneous polynomials are in the indeterminates
$X_1,X_2,\ldots,X_m,Y_m,\ldots,Y_1$, and will have the form
%$$f=f_0{f_1}^2{f_2}^{2^2}\cdots{f_{t-1}}^{2^{t-1}},$$
%where the form of each $f_j$ is
given in the following definition.
\begin{Definition}
Suppose the set of integers $\{1,\ldots,m\}$ is partitioned into
five disjoint sets, $R,\ R',\ S,\ T,$ and $U$.  We require
$|R|=|R'|=\rho$, and take the cardinalities of the other sets to be
$ \sigma,\ \tau,$ and $\upsilon,$ respectively, such that
$2\rho+2\sigma+\tau=\lambda$. We further impose a pairing between
$R=\{r_1,\ldots,r_\rho\}$ and $R'=\{r'_1,\ldots,r'_\rho\}$, so that
$r_i$ is paired with $r'_i,\ 1\le i\le \rho$.  We write
$W_i=X_iY_i,\ 1\le i\le m$, and write $Z_i$ to denote either $X_i$
or else $Y_i$. Then we shall say that a square-free homogeneous
polynomial $F$ belongs to the \emph{class}
$(\rho,\sigma,\tau,\upsilon)$ if it can be written as
$$F=\prod_{i=1}^{\rho}(W_{r_i}+W_{r'_i}) \prod_{i\in S}W_i \prod_{i\in T}
Z_i.$$

We denote by $P^\lambda_\ell$ the set of polynomials of those classes
$(\rho,\sigma,\tau,\upsilon)$ with
$2\rho+2\sigma+\tau=\lambda$ and $\sigma\le\ell$.  Then $\sigma$ will be called the
\emph{level} of $F$, and $\ell$ will be called the \emph{level} of $P^\lambda_\ell$.
\end{Definition}

We now construct a basis for $k[\mathcal P]$. We begin with a basis
for the space of square-free homogeneous polynomials of degree
$\lambda$ in $X_1,Y_1,\ldots,X_m,Y_m$.  Note that the linear span of
$P^\lambda_{\ell-1}$ is a subspace of the span of $P^\lambda_\ell$.

\begin{Definition}
Let $\lambda$ be a fixed integer, $0\le\lambda\le2m$.  Choose $B^\lambda_0$ to be a  linearly independent subset of $P^\lambda_0$ of maximum size.  For $1\le\ell\le\lambda/2$, suppose that $B^\lambda_{\ell-1}$ has been chosen.  Then we choose
 ${P'}^\lambda_\ell\subset P^\lambda_\ell$ such that
$B^\lambda_\ell=B^\lambda_{\ell-1}\cup {P'}^\lambda_\ell$ forms a basis for the span of $P^\lambda_\ell$.  Finally, let
$$\mathcal B=\bigcup_{0\le\lambda\le2m}B^\lambda_{\lfloor\lambda/2\rfloor}.$$
\end{Definition}

Note that $B^\lambda_{\lfloor\lambda/2\rfloor}$ is a basis for the
space of all square-free homogeneous polynomials of degree
$\lambda$. Thus $\mathcal B$  forms a basis of the space of
square-free polynomials in $X_1,Y_1,\ldots,X_m,Y_m$. Polynomials of
the form $\mathcal F=F_0{F_1}^2\cdots{F_{t-1}}^{2^{t-1}}$, where
$F_j\in \mathcal B,\ 0\le j\le t-1$ form a basis for polynomials in
these indeterminates, where the exponent of each indeterminate is at
most $q-1$.

The evaluation map $\phi:
k[X_1,\ldots,X_m,Y_m,\ldots,Y_1]\rightarrow k[V]$, sending $X_i$ to
$x_i$ and $Y_i$ to $y_i$, $1\leq i\leq m$, restricts to a bijection
from the space of square-free homogeneous polynomials in
$X_1,\ldots,X_m,Y_m,\ldots,Y_1$ to those in
$x_1,\ldots,x_m,y_m,\ldots,y_1$. If $F$ is a square-free polynomial
belonging to class $(\rho,\sigma,\tau,\upsilon)$, then we shall
refer to the function $\phi(F)$ as belonging to that class also.

\begin{Definition}
Let $(s_0,\ldots,s_{t-1})\in\HH$ and let $f=f_0{f_1}^2\cdots
{f_{t-1}}^{2^{t-1}}\in k[\PP]$ such that the degree of $f_j$ is
$\lambda_j=2s_{j+1}-s_j$ {\rm(}subscripts {\rm modulo $t$),} and
$f_j\in \phi(B^{\lambda_j}_{\lfloor \frac {\lambda_j} {2}
\rfloor})$, for each $j,\ 0\le j\le t-1$.  Then we shall say that
$f$ is a \emph{symplectic basis function} ({SBF}), of the
specified $\HH$-type.
\end{Definition}

\subsection{Linear substitutions in factorizable functions}\label{subs}
We will frequently be dealing with functions, such as basis monomials
or SBFs, which can be factorized as a product
\begin{equation}\label{digits}
f=f_0{f_1}^2\cdots{f_{t-1}}^{2^{t-1}}\in k[\PP]
\end{equation}
of functions $f_j\in k[V]$ which are images under $\phi$ of
homogeneous polynomials. We wish to think of  $f_j$ as the \emph{$j^{\mathrm{th}}$
digit of $f$}.  However the representation (\ref{digits})
is not unique in general; $f=x_1^2$ can be factorized by taking
$f_0=x_1^2$ or by taking $f_1=x_1$.
Thus when we refer to the digit of such a function,
it must be with a particular factorization in mind.
If we impose the additional requirement that the $f_j$ in
(\ref{digits}) be square-free, then the digits are
determined up to scalars and such a function $f$ has a well-defined $\HH$-type,
as in the case of basis monomials and SBFs.
The more general notion of digit is needed
to discuss transformations of functions.
Let $b=b_0{b_1}^2\cdots{b_{t-1}}^{2^{t-1}}$ be a function
which has a well-defined $\HH$-type.
An element $g\in\GL(V)$ acts on $k[V]$ as a linear substitution,
and we have
\begin{equation}\label{linsubs}
gb=(gb_0){(gb_1)}^2\cdots{(gb_{t-1})}^{2^{t-1}}.
\end{equation}
The digits $gb_j$ in (\ref{linsubs}) are the homogeneous polynomials
obtained from the $b_j$ through linear substitution, and may
contain monomials with square factors.
Like any function, $gb$ can be rewritten as a linear combination of
factorizable functions with square-free digits, such as
basis monomials. We want to take a closer look at this
rewriting process, keeping track of the relationship
between the $\HH$-type of $b$  and the $\HH$-types of the
terms in the possible rewritten forms of $gb$.
The rewriting as a combination of basis monomials is done as follows.
The product (\ref{linsubs}) is first distributed into
a sum of products of monomials, each monomial having the factorized form
(\ref{digits}), possibly with square factors in the digits.
Then if the first digit has a factor $x_i^2$, the factor $x_i^2$
is replaced by 1 and the second digit is multiplied
by $x_i$. We will call this a \emph{carry} from the first digit
to the second. Note that this process causes the degree of the first
digit to decrease by $2$ while that of the second increases by one.
We perform all possible carries from the first digit to the second, leaving the first digit square-free. We then repeat the process, performing carries from the second digit to the third, and so on. Carries from the last digit
go to the first digit, and after that a second round of carries
from the first to second digits, second to third, etc. may occur.
This process terminates because each time we have a carry
from the last digit to the first, the total degree decreases by $2^t-1$.
To refine this idea we define, for $0\leq e\leq t-1$, the $e^{\mathrm{th}}$ \emph{twisted degree} of (\ref{digits}) by
$$
\deg_e(f)=\sum_{j=0}^{t-1}2^{[j-e]}\deg(f_j),
$$
where $[j-e]$ means the remainder modulo $t$.
The effect of a carry from the $(j-1)^{\mathrm{th}}$ digit to the $j^{\mathrm{th}}$ digit
is to lower the $j^{\mathrm{th}}$ twisted degree by one, leaving other twisted
degrees unchanged. For those functions $f$ which have a well-defined $\HH$-type
the relation between the $\HH$-type $(s_0,\ldots,s_{t-1})$
and the twisted degrees is simple: $(2^t-1)s_e=\deg_e(f)$.
We conclude that in the process of rewriting $gb$,
whenever a carry is performed on a monomial
the basis monomial which results will have a strictly lower $\HH$-type
than our original function $b$.

%%%%%%%%%%%%%%%%%%%%%%%%%%%%%%%%%%%%%%%%%%%%%%%%%%%%%%%%%%%%%%%%%%%%%%%%%%%%%%%%%

\section{Submodules of the exterior powers}\label{wedges}

The ring $k[X_1,\ldots,X_m,Y_1,\ldots, Y_m]/(X_i^2,Y_i^2)_{i=1}^m$
can be viewed as the exterior algebra $\wedge(V^*)$, with
$X_i$ and $Y_i$ corresponding to the basis elements $x_i$ and $y_i$ of
$V^*$ respectively.
Therefore, we can think of elements of  $\wedge(V^*)$
as square-free polynomials in the $X_i$ and $Y_i$ and
identify $\wedge^\lambda(V^*)$ with $S^\lambda$, for $0\leq \lambda\leq2m$.

For each $\lambda$, we define
$S^\lambda_\ell$ to be the $k$-span of $P^\lambda_\ell$, which is also the $k$-span of $B^\lambda_\ell$.  Then the levels of functions define
a filtration of subspaces
$$
0\subset S^\lambda_0\subseteq\cdots \subseteq
S^\lambda_{\lfloor\frac{\lambda}2\rfloor} =S^\lambda.
$$
Note that if $\lambda\geq m$ then $S^\lambda_\ell=0$ for $\ell<\lambda-m$.

\begin{Lemma}\label{submodule} Each $S^\lambda_\ell$ is a $k\Sp(V)$-submodule of $S^\lambda$.
\end{Lemma}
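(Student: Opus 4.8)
The plan is to verify directly that $S^\lambda_\ell$ is stable under a generating set of $\Sp(V)$. It is classical that $\Sp(V)$ is generated by its symplectic transvections; relative to our fixed hyperbolic basis these, together with the diagonal torus, the permutations of the hyperbolic pairs, and the Levi subgroup $\GL_m$, are realized on $\wedge(V^*)$ by the degree-preserving algebra automorphisms induced by the coordinate substitutions (i) $X_i\mapsto X_i+\alpha Y_i$; (ii) $X_i\mapsto X_i+\alpha X_j$ together with $Y_j\mapsto Y_j+\alpha Y_i$ (for $i\neq j$); (iii) $X_i\mapsto X_i+\alpha Y_j$ together with $X_j\mapsto X_j+\alpha Y_i$ (for $i\neq j$); their transposes, which interchange the roles of the $X$'s and $Y$'s; and the evident monomial automorphisms. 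Since $S^\lambda_\ell$ is the $k$-span of $P^\lambda_\ell$ and each such $g$ is linear, it suffices to show $gF\in S^\lambda_\ell$ for every $F\in P^\lambda_\ell$; because the stabiliser of a subspace is a subgroup, checking this on the generators proves the Lemma.

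Fix $F=\prod_{k=1}^{\rho}(W_{r_k}+W_{r_k'})\prod_{i\in S}W_i\prod_{i\in T}Z_i$ of level $\sigma\le\ell$, and a generator $g$. As $g$ is an algebra automorphism, I would expand $gF$ as a square-free polynomial and recollect the resulting monomials into class polynomials, bounding their levels. What makes this work is the \emph{symplectic symmetry} built into (ii) and (iii): whenever the substitution produces a new pure factor $W_a=X_aY_a$, the matching clause of the same generator simultaneously produces its partner $W_b$ with the \emph{same} coefficient $\alpha$, so the two occur only through the combination $W_a+W_b$. Concretely, substitution (iii) sends $X_iX_j\mapsto X_iX_j+\alpha(W_i+W_j)+\alpha^2 Y_iY_j$, while substitution (ii) sends $X_iY_j\mapsto X_iY_j+\alpha(W_i+W_j)+\alpha^2 X_jY_i$; in each case the dangerous pure factors appear only inside $W_i+W_j$. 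The relevant point is that every symmetric factor $g(W_{r_k}+W_{r_k'})$, and in particular every $W_a+W_b$, lies in $S^2_0$: writing $\partial\colon\wedge^2(V^*)\to k$ for the $\Sp(V)$-equivariant contraction dual to the form $b$, one has $\partial(W_a)=1$ and $\partial(Z_cZ_d)=0$ for $c\neq d$, so $S^2_0=\ker\partial$ and $\partial(W_a+W_b)=1+1=0$. Hence the symmetric combinations never raise the level, and $gF$ is a combination of class polynomials of level $\le\sigma$, i.e. $gF\in S^\lambda_\sigma\subseteq S^\lambda_\ell$.

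The substitutions (i) and their transposes alter a single coordinate and create no new pure factor, so those cases are immediate, and (ii) is only slightly more involved. I expect the genuine work, and the main obstacle, to be the bookkeeping for (iii) and its transpose, where \emph{both} $X_i$ and $X_j$ are changed. One must run through the finitely many ways the indices $i,j$ can meet the partition $R,R',S,T,U$ of $F$ and check in each configuration that the expansion of $gF$ recollects into class polynomials of level $\le\sigma$. The essential phenomenon is already visible in the identity $g\big((W_1+W_2)(W_3+W_4)\big)=(W_1+W_2)(W_3+W_4)+\alpha\,(W_2+W_4)Y_1Y_3$, for generator (iii) with $(i,j)=(1,3)$, where both summands on the right are manifestly of level $0$.

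One subtlety to respect throughout is that the argument cannot be made factor by factor. Although each individual factor $g(W_{r_k}+W_{r_k'})$ lands in $S^2_0$, their product need \emph{not} lie in $S^{2\rho}_0$: for distinct $a,b,c$ one has $(W_a+W_b)(W_a+W_c)=W_aW_b+W_aW_c+W_bW_c$, a ``triangle'' that lies in $S^4_2\setminus S^4_1$. The honest expansion of $gF$ avoids this because the cross terms cancel or are absorbed into genuine symmetric factors, as in the prototype identity above; the proof must therefore track the fully expanded $gF$ rather than the levels of its factors in isolation.
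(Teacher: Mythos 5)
Correct, and essentially the paper's own approach: the paper proves this lemma by referring to the generator-by-generator case analysis in the proof of Lemma~\ref{submodule lemma}, carried out modulo $X_i^2$ and $Y_i^2$ so that no carries arise, and your reduction to root-subgroup generators plus the observation that new pure factors $W_a$ only ever enter through level-zero combinations $W_a+W_b$ (with $W_b$ either also new or a survivor of a broken $R$-pair) is exactly the mechanism that case analysis verifies. The finite bookkeeping you defer over the ways $i,j$ meet the partition $(R,R',S,T,U)$ is precisely the list of calculations the paper writes out there.
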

%\begin{proof}Insert David's proof here.
%\end{proof}
We delay the proof of Lemma~\ref{submodule} until after the similar proof of Lemma~\ref{submodule lemma}.

\begin{Lemma}\label{Weyl} Assume $\lambda\leq m$.
Then $S^\lambda_0$ has dimension $\binom{2m}{\lambda}
-\binom{2m}{\lambda-2}$. A basis of $S^\lambda_0$ consists of all
elements of $P^\lambda_0$ of the form
$\prod_{i=1}^\rho(W_{r_i}+W_{r'_i})\prod_{t\in T}Z_t$, where
$R=\{r_1,r_2,\ldots ,r_{\rho}\}$, $R'=\{r'_1,r'_2,\ldots ,r'_\rho\}$
and $T$ are disjoint subsets of $\{1,\ldots, m\}$ such that
$2\abs{R}+\abs{T}=\lambda$ and the following conditions hold.
\begin{enumerate}
\item $r_1<r_2<\cdots <r_\rho$ and $r'_1<r'_2<\cdots <r'_\rho$.
\item $r_i$ is the smallest element of $\{1,\ldots, m\}\setminus T$
which is not in the set $\{r_j, r'_j \mid\, j<i\}$.
\end{enumerate}
\end{Lemma}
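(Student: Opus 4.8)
The plan is to establish the dimension formula and the explicit basis at the same time, by showing that the listed elements—call them the \emph{reduced} elements of $P^\lambda_0$—are linearly independent, that they span $S^\lambda_0$, and that there are exactly $\binom{2m}{\lambda}-\binom{2m}{\lambda-2}$ of them. The combinatorial heart is that conditions (1) and (2) normalize the matching $\{(r_i,r_i')\}$: the left endpoints $R=\{r_i\}$ are forced to be the greedily smallest available indices, while the right endpoints $R'=\{r_i'\}$ increase with $i$. I would first record the identity
\begin{equation*}
(W_a+W_b)(W_c+W_d)+(W_a+W_c)(W_b+W_d)+(W_a+W_d)(W_b+W_c)=0,
\end{equation*}
valid in characteristic $2$ and checked by expanding and observing that each degree-four monomial $W_iW_j$ occurs exactly twice. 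Multiplying by any square-free monomial $G$ in the remaining variables produces a family of three-term \emph{straightening relations} among the elements of $P^\lambda_0$.

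For linear independence I would use a leading-term argument. Fix a term order on square-free monomials that prefers larger doubled indices, and to each $F=\prod_{i=1}^\rho(W_{r_i}+W_{r_i'})\prod_{i\in T}Z_i$ assign the monomial obtained by taking the larger index $W_{r_i'}$ from each factor, namely $\mathrm{lt}(F)=\prod_{i\in R'}W_i\prod_{i\in T}Z_i$; since the pairs are disjoint, this is the unique maximal term of $F$. The content is that on the reduced elements the assignment $F\mapsto(R',T,(Z_i))$ is injective: given $R'$ together with $T$ and the signs $Z_i$, conditions (1) and (2) reconstruct the matching uniquely, pairing the $i$-th smallest element of $R'$ with the $i$-th greedily chosen left endpoint. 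Distinct reduced elements then have distinct leading monomials, and independence follows in the usual triangular fashion.

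For the count I would show that the leading monomials $\prod_{i\in R'}W_i\prod_{i\in T}Z_i$ that actually occur are exactly those for which $R'$ is \emph{ballot-admissible} in the ordered ground set $\{1,\dots,m\}\setminus T$: reading left to right, no prefix contains more elements of $R'$ than of its complement (so that every right endpoint has an available earlier left endpoint). On a ground set of size $N=m-\tau$ the number of such $R'$ of size $\rho$ is the ballot number $\binom{N}{\rho}-\binom{N}{\rho-1}$. Summing $2^{\tau}\bigl(\binom{N}{\rho}-\binom{N}{\rho-1}\bigr)$ over the decompositions $2\rho+\tau=\lambda$ and over the choices of $T$ and signs, the first terms assemble into the total number $\binom{2m}{\lambda}$ of all square-free degree-$\lambda$ monomials, while the subtracted terms, after reindexing $\rho\mapsto\rho-1$, assemble into the total number $\binom{2m}{\lambda-2}$ of degree-$(\lambda-2)$ monomials. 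This is exactly where $\lambda\le m$ enters, guaranteeing $N=m-\tau\ge 2\rho$, and it yields precisely $\binom{2m}{\lambda}-\binom{2m}{\lambda-2}$ reduced elements.

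The remaining step, spanning, is the one I expect to be the main obstacle. I would argue that every $F\in P^\lambda_0$ lies in the span of the reduced elements by iterating the straightening relations: whenever the matching of $F$ violates (1) or (2)—for example two nested pairs, or a right endpoint preceding the left endpoint it should follow—one of the three-term relations rewrites $F$ as a sum of two products whose leading monomials are strictly smaller in the chosen order. Since the leading monomials lie in a finite set and strictly decrease, the process terminates and the irreducible elements are precisely the reduced ones; the delicate point is verifying that a reducing relation is always available for a non-reduced $F$ and that no reduced element admits one. Combining spanning with the independence and the count shows the reduced elements form a basis of $S^\lambda_0$ of the asserted size. (Alternatively one could obtain the bound $\dim S^\lambda_0\le\binom{2m}{\lambda}-\binom{2m}{\lambda-2}$ by placing $S^\lambda_0$ inside the kernel of the contraction $\partial=\sum_i\partial_{X_i}\partial_{Y_i}\colon S^\lambda\to S^{\lambda-2}$—each generator of $P^\lambda_0$ is annihilated because $W_{r_i}$ and $W_{r_i'}$ contribute cancelling images—and checking $\partial$ is surjective for $\lambda\le m$; but verifying surjectivity over $\F_2$ is itself delicate, so I would favor the straightening route.)
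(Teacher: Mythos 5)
The paper does not actually prove this lemma: its ``proof'' is a citation to Brouwer and to Jantzen's thesis, so your self-contained combinatorial argument is necessarily a different route (the closest thing in the paper is the straightening carried out inside the proof of Lemma~\ref{wfilt}, which uses the relations (\ref{subs1}) and (\ref{subs2}) and then closes the loop with a dimension count that takes the present lemma as input). Two of your three steps are correct and complete. The leading-term argument works: with the order on the doubled-index sets given by $\sum_{i\in A}2^i$, the term $\prod_{i\in R'}W_i\prod_{i\in T}Z_i$ is the strict maximum of $F$ because $r_i<r'_i$, it determines $(R',T,(Z_i))$, and the greedy reconstruction of $R$ from $(R',T)$ shows the assignment is injective on reduced elements. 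The enumeration is also right: the greedy pairing succeeds exactly when $|\{n\in N: n\le r'_i\}|\ge 2i$ for all $i$, which is your ballot condition, and the telescoping $\sum_{2\rho+\tau=\lambda}\binom{m}{\tau}2^{\tau}\bigl[\binom{m-\tau}{\rho}-\binom{m-\tau}{\rho-1}\bigr]=\binom{2m}{\lambda}-\binom{2m}{\lambda-2}$ is valid precisely because $\lambda\le m$ keeps $2\rho\le m-\tau$ in every summand.

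The genuine gap is the spanning step, and it is not merely that you defer the termination argument: the toolkit you set up is insufficient. The three-term identity involves four indices all of which already occur in pair factors, so it can only re-match indices within $R\cup R'$; it can never enlarge the set of indices used by the pairs. But condition (2) forces $r_i$ to be the smallest index of $\{1,\dots,m\}\setminus T$ not yet used, which may be an index currently in $U$. For example, with $m\ge 3$ and $\lambda=2$ the element $(W_2+W_3)\in P^2_0$ violates condition (2), and no three-term relation applies to a single pair factor; the only way to straighten it is the two-term relation $(W_2+W_3)=(W_1+W_2)+(W_1+W_3)$, i.e.\ exactly (\ref{subs2}) in the paper, which imports a fresh index from $U$ into $R$. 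So your rewriting system must include both families of relations, and then one still needs a well-founded measure showing the combined process terminates (applying the two-term relation can disturb the orderings restored by the three-term one). Since linear independence plus the count only give the lower bound $\dim S^\lambda_0\ge\binom{2m}{\lambda}-\binom{2m}{\lambda-2}$, the missing upper bound (spanning, or equivalently the surjectivity of the contraction $\partial$ onto $S^{\lambda-2}$, which you rightly note is itself delicate in characteristic $2$) is exactly the content of the lemma, so as written the proof is incomplete.
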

\begin{proof} A proof can be found in \cite{brouwer}, Theorem 1.1;
an earlier proof appears in the examples of \cite{jantzen_thesis}
p.~39. (We only need the statements in the characteristic 2 case,
but these references treat all characteristics.)
\end{proof}

%added ``interested reader'' to indicate not required.
\begin{Remark} The module  $S^\lambda_0$ or, more precisely, their
scalar extensions to an algebraic closure $\overline k$
are examples of {\it Weyl} modules for the algebraic group
$\Sp(\overline k\otimes_kV)$ and it is in this context that
they were studied in \cite{jantzen_thesis} and \cite{brouwer}.
We refer the interested reader to \cite{jantzen_book} for the definitions
and properties of this important class of modules.
\end{Remark}

\begin{Lemma}\label{wfilt} Assume $\lambda\leq m$.
\begin{enumerate}[\rm 1.]
\item We have an isomorphism
\begin{equation} S^\lambda_\ell/S^\lambda_{\ell-1}\cong S^{\lambda-2\ell}_0.
\end{equation}
\item The dimension of $S^\lambda_\ell$ is $\binom{2m}{\lambda}-\binom{2m}{\lambda-2\ell-2}$.
\end{enumerate}
\end{Lemma}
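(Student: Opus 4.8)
The plan is to reduce the whole statement to the injectivity of a single $k\Sp(V)$-map — multiplication by the invariant form — and then to force bijectivity by a global dimension count. First I observe that part (2) is a formal consequence of part (1): writing $d^\mu_j=\dim\bigl(S^\mu_j/S^\mu_{j-1}\bigr)$ (with $S^\mu_{-1}=0$), part (1) together with Lemma~\ref{Weyl} gives $d^\lambda_j=\binom{2m}{\lambda-2j}-\binom{2m}{\lambda-2j-2}$ for every $j$ (legitimate since $\lambda-2j\le m$), and then $\dim S^\lambda_\ell=\sum_{j=0}^\ell d^\lambda_j$ telescopes to $\binom{2m}{\lambda}-\binom{2m}{\lambda-2\ell-2}$. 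So I would concentrate on part (1).

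The key object is the $\Sp(V)$-invariant element $\omega=\sum_{i=1}^m W_i\in S^2$ attached to the alternating form. I would study multiplication $m_\omega\colon S^{\lambda-2}\to S^\lambda$, which is $k\Sp(V)$-equivariant because $\Sp(V)$ fixes $\omega$. The crucial point, special to characteristic two, is how $m_\omega$ interacts with levels. For a class polynomial $F$ of class $(\rho,\sigma,\tau,\upsilon)$ with unused set $U$, one has $W_iF=0$ for $i\in S\cup T$ (a repeated $X_i$ or $Y_i$ occurs), while for a paired index the two contributions satisfy $W_{r_k}F=W_{r_k}W_{r'_k}(\cdots)=W_{r'_k}F$ and hence cancel in characteristic two. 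Therefore $\omega F=\sum_{i\in U}W_iF$, a sum of class polynomials of level $\sigma+1$. Consequently $m_\omega$ carries $S^{\lambda-2}_{\ell-1}$ into $S^\lambda_\ell$ and $S^{\lambda-2}_{\ell-2}$ into $S^\lambda_{\ell-1}$, so by Lemma~\ref{submodule} it induces a $k\Sp(V)$-map $\bar\omega\colon S^{\lambda-2}_{\ell-1}/S^{\lambda-2}_{\ell-2}\to S^\lambda_\ell/S^\lambda_{\ell-1}$ for each $\ell\ge1$.

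Granting that every such $\bar\omega$ is injective, I would finish as follows. Since the level filtration exhausts $S^\mu$, we have $\sum_j d^\mu_j=\binom{2m}{\mu}$; injectivity gives $d^\lambda_\ell\ge d^{\lambda-2}_{\ell-1}$ for $\ell\ge1$; and Lemma~\ref{Weyl} gives $d^\lambda_0=\binom{2m}{\lambda}-\binom{2m}{\lambda-2}$. Summing the inequalities over $\ell\ge1$ and adding $d^\lambda_0$ yields $\binom{2m}{\lambda}\ge d^\lambda_0+\binom{2m}{\lambda-2}=\binom{2m}{\lambda}$, so every inequality is an equality and each injective $\bar\omega$ is an isomorphism of equal-dimensional spaces. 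Iterating down the chain $S^\lambda_\ell/S^\lambda_{\ell-1}\cong S^{\lambda-2}_{\ell-1}/S^{\lambda-2}_{\ell-2}\cong\cdots\cong S^{\lambda-2\ell}_0$ proves part (1), and part (2) follows from the telescoping above.

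The hard part is the injectivity of $\bar\omega$, and this is exactly where characteristic two bites. The classical Lefschetz / $sl_2$ argument is unavailable: here $\omega^2=0$, so $m_\omega$ cannot be iterated, and the natural ``weight'' scalar obtained by contracting $\omega$ back (which works out to $m-\lambda+2\ell$) vanishes modulo $2$ precisely when $m-\lambda$ is even. I therefore expect injectivity to require a direct argument using the explicit basis of $S^{\lambda-2}_{\ell-1}$ coming from Lemma~\ref{Weyl} and the chosen sets ${P'}^\lambda_\ell$: one must show that the elements $\omega F$, for $F$ running over that basis, stay linearly independent modulo $S^\lambda_{\ell-1}$. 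This should be carried out by tracking leading terms under the carry/rewriting process of Section~\ref{subs}, the same bookkeeping that underlies Lemma~\ref{submodule}.
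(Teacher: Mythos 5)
Your overall architecture --- reduce part (2) to part (1) by telescoping, establish one inequality per level via a degree-raising map, and force equalities by the global count $\sum_j\dim(S^\lambda_j/S^\lambda_{j-1})=\binom{2m}{\lambda}$ --- is exactly the shape of the paper's argument (which runs the count with surjectivity and ``$\leq$'' where you use injectivity and ``$\geq$''). But the step you defer, the injectivity of $\bar\omega$, is not merely hard: it is \emph{false}, so no amount of leading-term bookkeeping will rescue it. Two independent ways to see this. First, since $\omega^2=0$ in characteristic two, the composite
\begin{equation*}
S^{\lambda-4}_{\ell-2}/S^{\lambda-4}_{\ell-3}\xrightarrow{\ \bar\omega\ }S^{\lambda-2}_{\ell-1}/S^{\lambda-2}_{\ell-2}\xrightarrow{\ \bar\omega\ }S^{\lambda}_{\ell}/S^{\lambda}_{\ell-1}
\end{equation*}
is induced by multiplication by $\omega^2$ and hence is zero; if both factors were injective this would force, e.g.\ for $\ell=2$, the vanishing of $S^{\lambda-4}_0$, which is nonzero by Lemma~\ref{Weyl}. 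Second, and more concretely: your own computation gives $\omega F=\sum_{i\in U}W_iF$ for $F$ of class $(\rho,\sigma,\tau,\upsilon)$, but these terms pair up, since $W_iF+W_jF=(W_i+W_j)F$ is a class polynomial of class $(\rho+1,\sigma,\tau,\upsilon-2)$ --- level $\sigma$, not $\sigma+1$, because $i,j$ have moved from $U$ into $R,R'$. Hence $\omega F\in S^\lambda_\sigma$ whenever $\upsilon$ is even, and since every level-$(\ell-1)$ class in degree $\lambda-2$ has the same $\upsilon=m-\lambda+\ell+1$, the map $\bar\omega$ is \emph{identically zero} on the whole subquotient whenever $m-\lambda+\ell$ is odd. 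The smallest instance is $m=2$, $\lambda=2$, $\ell=1$: there $\omega=W_1+W_2$ is itself a class-$(1,0,0,0)$ polynomial lying in $S^2_0$, so $\bar\omega\colon S^0_0\to S^2_1/S^2_0$ is the zero map between two one-dimensional spaces. You noticed that the contraction scalar vanishes mod $2$ and read it as an obstruction to one proof technique; it is in fact a counterexample to the statement you were planning to prove.

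What survives of your idea is the case $\upsilon$ odd, where $\bar\omega(F)\equiv W_{i_0}F$ for a single unused index $i_0$ --- and that is essentially the map the paper actually uses. The paper abandons the equivariant element $\omega$ entirely and defines $\alpha(f)=f\prod_{s\in S}W_s$ on the explicit basis of $S^{\lambda-2\ell}_0$ from Lemma~\ref{Weyl}, for an arbitrary $\ell$-subset $S$ of unused indices; the key lemma is then the \emph{surjectivity} of the induced map $\overline\alpha\colon S^{\lambda-2\ell}_0\to S^\lambda_\ell/S^\lambda_{\ell-1}$, proved by straightening an arbitrary spanning element with the relations (\ref{subs1})--(\ref{subs3}) and by checking that two level-$\ell$ polynomials with the same $R$, $R'$, $T$ agree modulo $S^\lambda_{\ell-1}$. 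The same dimension count, with all inequalities reversed, then forces every $\overline\alpha$ to be an isomorphism; equivariance of the subquotients is supplied separately by Lemma~\ref{submodule} rather than being built into the map. If you want to keep your framework, replace ``injectivity of $\bar\omega$'' by ``surjectivity of $\overline\alpha$'' and the rest of your argument goes through essentially verbatim.
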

\begin{proof} We shall define a linear map
$$
\alpha: S^{\lambda-2\ell}_0 \rightarrow S^\lambda_\ell.
$$
Let $f$ be a basis element of $S^{\lambda-2\ell}_0$ given by
Lemma~\ref{Weyl}, with index sets $(R,R',T)$.
Set $M=\{1,\ldots,m\}$. Then the number of indices
involved in $f$ is $2\rho+\tau=\lambda-2\ell$, and the number of unused indices is $(m-\lambda)+2\ell\ge 2\ell$.
%Since
%$\lambda\leq m$ there
%are at least $2\ell$ indices from $\{1,\ldots,m\}\setminus T$
%which do not appear in $f$.
We choose an arbitrary $\ell$-subset $S$
of these unused indices and define
$$
\alpha(f)=f\prod_{s\in S}W_s.
$$
Composing with the natural projection gives a map
$$\overline\alpha:\quad
S^{\lambda-2\ell}_0 \to S^\lambda_\ell/S^\lambda_{\ell-1}.$$

We claim that the map $\overline\alpha$ is surjective.

Now $S^\lambda_\ell$ is spanned by polynomials of the form

\begin{equation}\label{spanpolys}
\prod_{r_i\in R,r'_i\in R'}(W_{r_i}+W_{r'_i})\prod_{s_i\in S}W_{S_i}\prod_{t_i\in T}Z_{t_i},
\end{equation}
where $R$, $R'$, $S$ and $T$ are disjoint subsets of
$M=\{1,\ldots,m\}$, with $\abs{R}=\abs{R'}$, and
$2\abs{R}+2\abs{S}+\abs{T}=\lambda$, and $\abs{S}\leq\ell$. The
classes of these elements such that $\sigma=\ell$ span the quotient
space $S^\lambda_\ell/S^\lambda_{\ell-1}$. We consider what
restrictions may be placed on the form of such a polynomial and
still have a spanning set. Of course we  may assume that the $r_i$
form an increasing sequence and that $r_i<r'_i$ for all $i$.
Furthermore, the equation
\begin{equation}\label{subs1}
(W_1+W_4)(W_2+W_3)=(W_1+W_3)(W_2+W_4)+(W_1+W_2)(W_3+W_4)
\end{equation}
implies that we can assume that the $r'_i$ also form an increasing
sequence.
Next observe that
\begin{equation}\label{subs2}
(W_2+W_3)=(W_1+W_2)+(W_1+W_3)
\end{equation}
and
\begin{equation}\label{subs3}
(W_2+W_3)W_1=(W_1+W_3)W_2 +(W_1+W_2)W_3.
\end{equation}

Equations (\ref{subs2}) and (\ref{subs3}) respectively allow us to
assume in addition that no element of $U$ or  $S$ precedes any
element of $R$, or in other words that for each $i$, $r_i$ is the
smallest index in $(\{1,\dots,m\}\setminus
T)\setminus\{r_j,r_j'\mid\, j<i\}$. Thus the  sets $R$ and $R'$ can
be assumed to satisfy the same conditions as the corresponding sets
for the basis elements of $S^{\lambda-2\ell}_0$ stated in
Lemma~\ref{Weyl}. We have proved that there is a spanning set of
$S^\lambda_\ell$ such that for each element for which $\sigma=\ell$,
the subsets $R$ and $R'$ and $T$ are exactly the same as those of
some $\alpha(f)$, where $f$ is a basis element of
$S_0^{\lambda-2\ell}$. Finally, we note that two elements of
$P^\lambda_\ell$ with the same sets $R$, $R'$ and $T$ are equal
modulo $S^\lambda_{\ell-1}$. To see this, suppose the index sets of
the two elements are $(R,R',S,T,U)$ and $(R,R',S^*,T,U^*)$ and
assume that the symmetric difference of $S$ and $S^*$ is
$\{s,s^*\}$. Then the sum of the two polynomials has index sets
$(R\cup\{s\},R'\cup\{s^*\},S\cap S^*, T,(U\cup
U^*)\setminus\{s,s^*\})$; so it belongs to $S^\lambda_{\ell-1}$.
Therefore the map $\overline\alpha$ is surjective (and,
incidentally, independent of the choice of $\ell$-subset $S$ in the
definition of $\alpha$).

Thus, since Lemma~\ref{Weyl} gives the dimension of $S^{\lambda-2\ell}_0$,
we have
\begin{equation}
\begin{aligned}
\binom{2m}{\lambda}=\dim S^\lambda&=\sum_{\ell=0}^{\lfloor\frac\lambda 2\rfloor}\dim (S^\lambda_\ell/S^\lambda_{\ell-1})\\
&\leq \sum_{\ell=0}^{\lfloor\frac\lambda 2\rfloor}\dim S^{\lambda-2\ell}_0\\
&=\sum_{\ell=0}^{\lfloor\frac\lambda 2\rfloor}\binom{2m}{\lambda-2\ell}-
\binom{2m}{\lambda-2\ell-2}\\
&=\binom{2m}{\lambda},
\end{aligned}
\end{equation}
so equality holds throughout. Both parts of the lemma now follow.
\end{proof}

The case where  $\lambda\geq m$ will be treated by using various
dualities, which we now proceed to discuss. We do not assume
 $\lambda\geq m$ yet.
We start with the $k\GL(V)$-isomorphism
$\wedge^\lambda(V^*)\cong(\wedge^\lambda(V))^*$.
Let $\lambda^*=2m-\lambda$. Then there is a natural pairing
$$
\wedge^\lambda(V)\times\wedge^{\lambda^*}(V)\rightarrow \wedge^{2m}(V)=k
$$
given by exterior multiplication, which defines a $k\GL(V)$-isomorphism
$(\wedge^{\lambda}(V))^*\cong \wedge^{\lambda^*}(V)$.

Finally, the $k\Sp(V)$-isomorphism $V\cong V^*$ given by $v\mapsto b(v,\cdot)$
induces a $k\Sp(V)$-isomorphism
$\wedge^{\lambda^*}(V)\cong\wedge^{\lambda^*}(V^*)$.
Combining these isomorphisms yields a $k\Sp(V)$-module isomorphism
$$
S^\lambda\rightarrow S^{\lambda^*}, \quad
X_IY_J\mapsto X_{M\setminus J}Y_{M\setminus I},
$$
for each $\lambda$.
Putting these isomorphisms together for all $\lambda$ yields
a $k\Sp(V)$-automorphism $\delta$ of $\wedge(V^*)$ of order 2.
A straightforward calculation shows that a basis polynomial
of degree $\lambda^*$ and class $(\rho,\sigma,\tau,\upsilon)$
is mapped under $\delta$ to a basis polynomial
of degree $\lambda$ and class $(\rho,\upsilon,\tau,\sigma)$.
(Actually, the map $\delta$ is  very simple; a basis polynomial of
$S^{\lambda^*}$ with index sets $(R,R',S,T,U)$ is mapped to  basis polynomial
of $S^\lambda$ with index sets $(R,R',U,T,S)$; $\delta$ just swaps $U$
and $S$.)
Now assume $\lambda\geq m$.
Since $m=2\rho+\sigma+\tau+\upsilon$ and $\lambda^*=2\rho+2\sigma+\tau$,
we have
$$
\upsilon=m-\lambda^*+\sigma=(\lambda-m)+\sigma,
$$
which shows that
$$
\delta(S^{\lambda^*}_{\ell^*})=S^\lambda_{(\lambda-m)+\ell^*},
$$
for $0\leq\ell^*\leq\lfloor\frac{\lambda^*}{2}\rfloor$.

We have already computed the left hand side, so if we set
$\ell=(\lambda-m)+\ell^*$ then
$$
\dim S^\lambda_\ell=
\binom{2m}{\lambda^*}-\binom{2m}{\lambda^*-2\ell^*-2}=
\binom{2m}{\lambda}-\binom{2m}{\lambda-2\ell-2}.
$$

We have proved:
\begin{Theorem}\label{dimensions} Let $0\leq\lambda\leq 2m$
and $\max\{0,\lambda-m\}\leq\ell\leq \lfloor\frac{\lambda}{2}\rfloor$.
Then $S^\lambda_\ell$ has dimension
$$
\binom{2m}{\lambda}-\binom{2m}{\lambda-2\ell-2}.
$$
\end{Theorem}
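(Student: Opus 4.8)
The statement to be proved packages together two regimes, $\lambda\le m$ and $\lambda\ge m$, which overlap at $\lambda=m$, so my plan is to treat these separately and check that the resulting dimension formula agrees. When $\lambda\le m$ the claimed formula is already Lemma~\ref{wfilt}(2): the range $\max\{0,\lambda-m\}\le\ell\le\fl{\lambda/2}$ collapses to $0\le\ell\le\fl{\lambda/2}$, and there is nothing further to do. The entire content of the theorem is therefore the case $\lambda\ge m$, which I would reduce to the already-settled case by a symplectic duality that sends $\lambda$ to $\lambda^*=2m-\lambda\le m$.

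To build the duality I would compose the three natural isomorphisms indicated before the statement. First, $\wedge^\lambda(V^*)\cong(\wedge^\lambda V)^*$ and the exterior-multiplication pairing $\wedge^\lambda V\times\wedge^{\lambda^*}V\to\wedge^{2m}V\cong k$ together give a $\GL(V)$-isomorphism $S^\lambda\cong\wedge^{\lambda^*}(V)$. Then the form $b$ supplies a $\Sp(V)$-isomorphism $V\cong V^*$, hence $\wedge^{\lambda^*}(V)\cong\wedge^{\lambda^*}(V^*)=S^{\lambda^*}$. Composing yields an involutory $\Sp(V)$-isomorphism $\delta\colon S^\lambda\to S^{\lambda^*}$; only the last step requires $\Sp(V)$ rather than $\GL(V)$, so one should verify that $\Sp(V)$-equivariance is not lost along the way. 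On basis monomials $\delta$ is the explicit map $X_IY_J\mapsto X_{M\setminus J}Y_{M\setminus I}$, and I would record this formula first since everything downstream follows from it.

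The crux is to track what $\delta$ does to the class $(\rho,\sigma,\tau,\upsilon)$ and, in particular, to the level. I would expand an explicit basis polynomial $\prod_{i=1}^\rho(W_{r_i}+W_{r'_i})\prod_{i\in S}W_i\prod_{i\in T}Z_i$ into its $2^\rho$ constituent monomials $X_IY_J$ and apply $\delta$ to each. Indexwise one checks that an index carrying a full factor $W_i$ (those in $S$, and the chosen member of each pair) is sent to an unused index, that an unused index (those in $U$, and the unchosen member of each pair) acquires a factor $W_i$, while an index in $T$ keeps its single factor $Z_i$. Because we work in characteristic $2$ no signs intervene, and resumming over the $2^\rho$ choices reassembles the binomials, so $\delta$ carries the polynomial to one of class $(\rho,\upsilon,\tau,\sigma)$; in short, $\delta$ simply interchanges the index sets $S$ and $U$, so the level changes from $\sigma$ to $\upsilon$.

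It then remains to convert this into a statement about the filtration and read off dimensions, and this is where I expect the only real bookkeeping. Since the five sets partition $\{1,\dots,m\}$ we have $m=2\rho+\sigma+\tau+\upsilon$, and combining this with $\lambda^*=2\rho+2\sigma+\tau$ gives $\upsilon=\sigma+(\lambda-m)$. Thus a level-$\ell^*$ polynomial in $S^{\lambda^*}$ is sent by $\delta$ to a level-$(\ell^*+(\lambda-m))$ polynomial in $S^\lambda$, which upgrades to the equality $\delta(S^{\lambda^*}_{\ell^*})=S^\lambda_{(\lambda-m)+\ell^*}$ of filtration terms. Setting $\ell=(\lambda-m)+\ell^*$, so that $0\le\ell^*\le\fl{\lambda^*/2}$ corresponds exactly to $\lambda-m\le\ell\le\fl{\lambda/2}$, the isomorphism $\delta$ gives $\dim S^\lambda_\ell=\dim S^{\lambda^*}_{\ell^*}$. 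Feeding in the value from the small-$\lambda$ case and simplifying $\binom{2m}{\lambda^*}=\binom{2m}{\lambda}$ together with $\lambda^*-2\ell^*-2=\lambda-2\ell-2$ yields the asserted dimension $\binom{2m}{\lambda}-\binom{2m}{\lambda-2\ell-2}$. The main obstacle is thus not any single hard step but the careful verification that $\delta$ really permutes the classes as claimed and respects the level filtration; once that index-chasing is in place, the dimension count is forced.
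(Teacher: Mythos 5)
Your proposal is correct and follows essentially the same route as the paper: the case $\lambda\le m$ is Lemma~\ref{wfilt}(2), and the case $\lambda\ge m$ is handled by the same composite duality $\delta\colon S^{\lambda^*}\to S^\lambda$, $X_IY_J\mapsto X_{M\setminus J}Y_{M\setminus I}$, which swaps the index sets $S$ and $U$ and hence sends class $(\rho,\sigma,\tau,\upsilon)$ to $(\rho,\upsilon,\tau,\sigma)$, giving $\delta(S^{\lambda^*}_{\ell^*})=S^\lambda_{(\lambda-m)+\ell^*}$ and the stated dimension. No substantive differences from the paper's argument.
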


\begin{Remark} The isomorphism in Lemma~\ref{wfilt} is in fact one
of $k\Sp(V)$-modules, and after extending the field  to $\overline k$
it is an isomorphism of rational $\Sp(V_{\overline k})$-modules. In
fact, it can be shown that the filtration given by the submodules
$\overline k\otimes_kS^\lambda_\ell$ is the filtration of
$\wedge^\lambda(V^*_{\overline k})$ by Weyl modules, dual to the
\emph{good filtration} described in \cite{donkin}, Appendix A, p.71
(for all characteristics).
\end{Remark}

%%%%%%%%%%%%%%%%%%%%%%%%%%%%%%%%%%%%%%%%%%%%%%%%%%%%%%%%%%%%%%%%%%%%%%%%%%%%%%%%%

\section{Bases for incidence modules}
\label{admiss_bases}

In this section we assume that $q>2$.  In the next section we show
that Theorem~\ref{basis_thm} below also holds for $q=2$, with a
slightly different proof.
\begin{Definition}
Let $1\leq r\leq 2m-1$ and let  $f=f_0{f_1}^2\cdots
{f_{t-1}}^{2^{t-1}}\in k[\PP]$ with $f_j$ homogeneous
and square-free for all $j$. If the $\HH$-type of $f$ is
$(s_0,s_1,\ldots ,s_{t-1})$, let
\begin{equation}\label{r-admit}
\ell_j=(m-r)\delta(r\leq m) + (2m-r-s_j).
\end{equation}
The function $f$ is called $r$-\emph{admissible} if $f$ is a
constant function, or if
\begin{enumerate}[\rm (a)]
\item for each $j$, $0\leq j\leq t-1$, $f_j\in \phi(P^{\lambda_j}_{\ell_j})$,
and
\item the $\HH$-type  of $f$ is less than or equal to
$(2m-r,2m-r,\ldots ,2m-r)$.
\end{enumerate}
In particular, if $f$ is constant or if
$f_j\in\phi(B^{\lambda_j}_{\ell_j})$ for all $j$,
$0\leq j\leq t-1$, we call $f$ an $r$-\emph{admissible SBF.}
\end{Definition}
Note that by construction, $r$-admissible functions are in the linear span of $r$-admissible SBFs.

\begin{Theorem}\label{basis_thm}
Assume $q>2$. The $r$-admissible symplectic basis
functions form a basis for the
$k\Sp(V)$-submodule $\CC_r$ of $k[\mathcal P]$.
\end{Theorem}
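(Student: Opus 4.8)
The plan is to show that the span $\mathcal{A}_r$ of the $r$-admissible SBFs equals $\CC_r$. Linear independence is automatic, since the $r$-admissible SBFs form a subset of the full SBF basis of $k[\PP]$; the entire content is the equality $\mathcal{A}_r=\CC_r$. I would establish the two inclusions separately. Because $\CC_r=k\Sp(V)\chi_X$ is the cyclic module generated by the characteristic function of a single $X\in\II_r$, the inclusion $\CC_r\subseteq\mathcal{A}_r$ reduces to showing (i) $\chi_X\in\mathcal{A}_r$ and (ii) $\mathcal{A}_r$ is an $\Sp(V)$-submodule, while the reverse inclusion $\mathcal{A}_r\subseteq\CC_r$ amounts to showing that $\chi_X$ generates all of $\mathcal{A}_r$.

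For (i), when $r\le m$ I would take the explicit totally isotropic subspace $X=\{x_1=\dots=x_m=0,\ y_{r+1}=\dots=y_m=0\}$, whose characteristic function is $\chi_X=\prod_{i=1}^m(1-x_i^{q-1})\prod_{i=r+1}^m(1-y_i^{q-1})$, and expand the product. Using $x_i^{q-1}=x_ix_i^2\cdots x_i^{2^{t-1}}$, each term $\prod_{i\in S}x_i^{q-1}\prod_{i\in S'}y_i^{q-1}$, with $S\subseteq\{1,\dots,m\}$ and $S'\subseteq\{r+1,\dots,m\}$, has all of its digits equal to the single square-free function $\prod_{i\in S\cap S'}x_iy_i\prod_{i\in S\setminus S'}x_i\prod_{i\in S'\setminus S}y_i$; hence it has constant $\HH$-type $(\lambda,\dots,\lambda)$ with $\lambda=\abs{S}+\abs{S'}$ and level $\sigma=\abs{S\cap S'}$. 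The bounds $\abs{S}\le m$, $\abs{S'}\le m-r$ and $\abs{S\cap S'}\le\abs{S'}$ then yield $\lambda\le 2m-r$ and $\sigma\le\ell_j$, so every term is an $r$-admissible function and therefore lies in $\mathcal{A}_r$. The case $r\ge m$ follows by transporting this computation through the order-two duality $\delta$ of Section~\ref{wedges}.

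The main obstacle is (ii), the $\Sp(V)$-invariance of $\mathcal{A}_r$. I would verify closure under a generating set of symplectic transvections (such as $x_i\mapsto x_i+\alpha y_i$ and $y_i\mapsto y_i+\alpha x_i$), treating a general $g\in\Sp(V)$ by means of the rewriting/carry process of Section~\ref{subs}. For an $r$-admissible SBF $f=f_0f_1^2\cdots f_{t-1}^{2^{t-1}}$ with digits $f_j\in S^{\lambda_j}_{\ell_j}$ and $\HH$-type $\sss\le(2m-r,\dots,2m-r)$, the transform $gf=(gf_0)(gf_1)^2\cdots(gf_{t-1})^{2^{t-1}}$ has digits of the same degrees, and the square-free part of each $gf_j$ remains inside $S^{\lambda_j}_{\ell_j}$ precisely because $S^{\lambda_j}_{\ell_j}$ is an $\Sp(V)$-submodule by Lemma~\ref{submodule}; thus the top-$\HH$-type component of $gf$ is again $r$-admissible of type $\sss$. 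The remaining square terms trigger carries, which strictly lower the $\HH$-type. The point that makes these carry terms admissible is that squares are produced exactly at the level-$\sigma$ letters (for example $x_iy_i\mapsto x_iy_i+\alpha y_i^2$ under $x_i\mapsto x_i+\alpha y_i$), while a carry that lowers $s_j$ strictly \emph{increases} the admissible bound $\ell_j=(m-r)\delta(r\le m)+(2m-r-s_j)$, so a carry can only move a digit into a more permissive level. Converting this heuristic into a proof --- tracking how the removal of an $x_i^2$ or $y_i^2$ factor alters the class $(\rho,\sigma,\tau,\upsilon)$ of a digit and checking that the new level never exceeds the relaxed bound --- is the technical heart, and I expect it to require the class-and-level bookkeeping of Section~\ref{wedges} together with a downward induction on the $\HH$-type.

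Granting (i) and (ii) gives $\CC_r\subseteq\mathcal{A}_r$, so it remains to prove $\mathcal{A}_r\subseteq\CC_r$, that is, that $\chi_X$ generates $\mathcal{A}_r$ as an $\Sp(V)$-module. I would argue by downward induction on the $\HH$-type. Modulo functions of strictly smaller $\HH$-type, the top stratum of $\mathcal{A}_r$ is a twisted tensor product of copies of $S^{2m-r}_{(m-r)\delta(r\le m)}$, which by Lemma~\ref{Weyl} and Theorem~\ref{dimensions} is a Weyl-type module; the top term of $\chi_X$ found in step (i), with digit $\prod_{i=r+1}^mx_iy_i\prod_{i=1}^rx_i$ in each twist, is a cyclic generator of that module, so the image of $\chi_X$ generates the whole top stratum. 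Applying group elements to elements of $\CC_r$ and subtracting their top-stratum parts then produces, through the same carry mechanism, admissible functions of strictly smaller $\HH$-type; proving that these fill out each lower stratum (the surjectivity of the carry maps between consecutive strata) closes the induction and gives $\mathcal{A}_r=\CC_r$.
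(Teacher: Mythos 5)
Your overall architecture coincides with the paper's: linear independence is free, the inclusion $\CC_r\subseteq\MM_r$ is obtained from the expansion of one characteristic function (Lemma~\ref{Cinside}) together with the $\Sp(V)$-stability of the span of the $r$-admissible SBFs (Lemma~\ref{submodule lemma}), and the reverse inclusion is a generation argument organized by the partial order on $\HH$. Your step (i) is essentially the paper's computation, and your reduction of the no-carry part of step (ii) to Lemma~\ref{submodule} is a clean observation. However, the proposal has genuine gaps precisely where the real work lies. First, in step (ii) the assertion that ``squares are produced exactly at the level-$\sigma$ letters'' is not accurate as stated, and the compensating fact you need --- that under a carry into the $j^{\mathrm{th}}$ digit the level $\sigma_j$ increases by at most as much as the bound $\ell_j$ does --- is not a formal consequence of anything you cite; it is established in the paper only by the exhaustive case analysis of $g_1(\alpha)$ and $g_2(\alpha)$ acting on each local form of a digit (equations (\ref{eq1})--(\ref{eq13}) and the carry table in the proof of Lemma~\ref{submodule lemma}). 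One must in particular check that terms such as $w_1+w_2$ arising from $x_1y_2$ are accounted under $\rho$ rather than $\sigma$. You correctly flag this as the technical heart but do not supply it.

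Second, and more seriously, in the direction $\MM_r\subseteq\CC_r$ you assert that the top term of $\chi_X$ ``is a cyclic generator'' of the top stratum ``by Lemma~\ref{Weyl} and Theorem~\ref{dimensions}.'' Those results give only dimensions; they do not imply that the twisted tensor product of the modules $S^{\lambda_j}_{\ell_j}$ (which are generally not simple, having the nontrivial level filtration of Section~\ref{wedges}) is generated as a $k\Sp(V)$-module by the image of one monomial. Establishing this is the content of Lemmas~\ref{permutesubscripts}--\ref{S-down}, which require the shift operators and projectors of Lemmas~\ref{shifts} and \ref{project} (and hence the hypothesis $q>2$) to modify one digit at a time while freezing the others. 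Likewise ``the surjectivity of the carry maps between consecutive strata'' is exactly Lemma~\ref{HH-down}, again requiring explicit operators, and the induction must additionally handle the exceptional case $r=m$ with a digit of class $(0,0,m,0)$ at maximal admissible level $0$, where Lemma~\ref{T-up} is unavailable and one must instead transport the exceptional digits unchanged from the leading monomial $m_+$. A minor further point: your appeal to the duality $\delta$ for the case $r\geq m$ of step (i) is unjustified ($\delta$ is defined on $\wedge(V^*)$, not on $k[\PP]$) and also unnecessary, since the direct expansion of $\prod_{i=1}^{2m-r}(1-x_i^{q-1})$ yields only monomials of level $0$.
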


The rest of this section is devoted to proving this theorem.

Let $\MM_r$ denote the linear span of the
$r$-admissible  SBFs, for $1\le r\le 2m-1$.

Let $\overline P^\lambda_\ell=\phi(P^{\lambda}_{\ell})$.

 We consider the following group elements, which together  generate $\Sp(V)$.
Let $\alpha\in k^\times$, and let $\pi$ be a permutation of $\{1,\ldots,m\}$.
\begin{gather}
x_1\mapsto \alpha x_1,\quad y_1\mapsto \alpha^{-1}y_1\label{gen1}\\
x_k\mapsto x_{\pi(k)},\;y_k\mapsto y_{\pi(k)},\quad1\le k\le m\label{gen2}\\
x_1\mapsto y_1,\quad y_1\mapsto x_1\label{gen3}\\
g_1(\alpha): x_1\mapsto x_1+\alpha y_1,\quad y_1\mapsto y_1 \label{transvection}\\
g_2(\alpha): x_1\mapsto x_1+\alpha x_2,\ y_1\mapsto y_1,\ x_2\mapsto x_2,\
y_2\mapsto \alpha y_1+y_2.\label{symptransvection}
\end{gather}

\begin{Lemma}
\label{submodule lemma}
$\MM_r$ is a $k\Sp(V)$-submodule of $k[\mathcal P]$.
\end{Lemma}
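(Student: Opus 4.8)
The plan is to use the statement, made just before the list, that the elements (\ref{gen1})--(\ref{symptransvection}) generate $\Sp(V)$; hence it suffices to prove $g\MM_r\subseteq\MM_r$ for each such generator $g$. Since $\MM_r$ is finite-dimensional and each $g$ is invertible, an inclusion $g\MM_r\subseteq\MM_r$ automatically upgrades to equality, so invariance under the generators yields invariance under all of $\Sp(V)$. As $\MM_r$ is the span of the $r$-admissible SBFs, I only need to show $gf\in\MM_r$ for every $r$-admissible SBF $f=f_0f_1^2\cdots f_{t-1}^{2^{t-1}}$. By (\ref{linsubs}) the action is digit-wise, $gf=(gf_0)(gf_1)^2\cdots(gf_{t-1})^{2^{t-1}}$, and each generator alters only the coordinates of index $1$ (or $1$ and $2$), so the whole analysis localizes to the effect on the few relevant factors of each digit.

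For the generators (\ref{gen1}), (\ref{gen2}) and (\ref{gen3}) the check is immediate. Scaling $x_1,y_1$, permuting the index set, or interchanging $x_1$ and $y_1$ each send a square-free class-$(\rho,\sigma,\tau,\upsilon)$ polynomial to another square-free polynomial of the same degree and the same level $\sigma$; note in particular that (\ref{gen3}) fixes $W_1=X_1Y_1$ and maps a $Z_1$-factor to a $Z_1$-factor. No square factors arise, so no carries are needed, the $\HH$-type is unchanged, and each transformed digit again lies in $\phi(P^{\lambda_j}_{\ell_j})$. Thus $gf$ is $r$-admissible, and by the remark that $r$-admissible functions lie in the span of $r$-admissible SBFs we conclude $gf\in\MM_r$.

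The substance lies in the two transvections (\ref{transvection}) and (\ref{symptransvection}), where the substitution can create square factors. For $g_1(\alpha)$ a factor $W_1$, whether standalone in $S$ or inside a pair $W_1+W_{r'}$, becomes $W_1+\alpha Y_1^2$; for $g_2(\alpha)$ the key combination $W_1+W_2$ is preserved because the cross terms $\alpha X_2Y_1$ cancel in characteristic $2$, while the individual factors acquire terms whose products generate squares $X_2^2$ or $Y_1^2$. I would expand $gf_j$ into monomials and separate the outcome into the terms that stay square-free and the terms that carry a square. The square-free terms keep the same degrees, hence the same $\HH$-type, and a direct inspection shows their level does not exceed $\sigma$, so they are again $r$-admissible. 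The terms with a square factor are rewritten by the carry process of Section~\ref{subs}; by the conclusion of that section every carry strictly lowers the $\HH$-type, so these terms acquire an $\HH$-type $\sss'\le\sss$, which in particular preserves condition (b).

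The main obstacle is controlling condition (a), the level bound $\ell_j=(m-r)\delta(r\le m)+(2m-r-s_j)$, through the (possibly cascading) carries. The mechanism that makes this work is a bookkeeping balance: a carry into digit $j$ multiplies that digit by a single variable while lowering $s_j$ by exactly one, thereby relaxing $\ell_j$ by one; on the other hand, multiplying a square-free digit by one variable raises its level by at most one, since it either creates a single new $W_i$, leaves the level unchanged, or triggers a further carry that lowers the level; meanwhile the digit that sheds the square has its level drop while its own bound is unchanged. Tracking the process carry by carry (formally, by downward induction on the $\HH$-type), I would show that the invariant ``level of the $j$th digit $\le\ell_j$'' is maintained at every stage, so each resulting SBF is $r$-admissible and lies in $\MM_r$. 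The delicate points requiring the careful case analysis that forms the bulk of the argument are those in which index $1$ (or $2$) meets a paired factor $W_{r_i}+W_{r_i'}$, so that the pairing is disturbed, and the doubly-squared terms $X_2^2Y_1^2$ produced by $g_2(\alpha)$, which demand two successive carries.
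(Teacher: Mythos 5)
Your proposal follows the same route as the paper's proof: reduce to the listed generators, dispose of (\ref{gen1})--(\ref{gen3}) by noting they permute or scale class polynomials, and for the two transvections track the carries digit by digit using exactly the paper's key balance --- a carry into digit $j$ lowers $s_j$ by one (so relaxes the bound $\ell_j$ by one) while raising the level $\sigma_j$ by at most one, and absent carries $\sigma_j$ never increases. The only difference is that you defer the explicit case-by-case computations that the paper writes out in full, but the invariant you propose to verify is precisely the one those computations establish.
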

\begin{proof}
Since $\Sp(V)$ acts as the identity on a constant function, we only
need to consider the action of $\Sp(V)$ on an SBF of $\HH$-type
$(s_0,\ldots,s_{t-1})$. For each fixed $\lambda,\ 0\le\lambda\le
2m,$ and $\ell,\ 0\le\ell\le\lambda/2$. Since
$m=2\rho+\sigma+\tau+\upsilon$, we note that if $\ell<\lambda-m$,
then $\overline P^\lambda_\ell$ is empty, while if $\ell>\lambda/2$,
then $\overline P^{\lambda}_{\ell}=\overline
P^{\lambda}_{\lfloor\lambda/2\rfloor}$.

We examine one by one the actions of generators of $\Sp(V)$ on
elements of our spanning set of functions.
First, since our set is defined symmetrically with respect to the
subscripts, it is clear that elements (\ref{gen2}) of $\Sp(V)$ which
permute the subscripts of the standard basis of $V^*$ also permute
the members of $\overline P^\lambda_\ell$.  It is also clear that
interchanging $x_i$ and $y_i,\ 1\le i\le m$ (\ref{gen3}), also
interchanges some members of $\overline P^\lambda_\ell$.  The
diagonal group elements (\ref{gen1}) multiply each $z_i$ by some
constant and leave each $w_i$ unchanged.  Thus they only multiply
members of $\overline P^\lambda_\ell$ by finite field elements.  It
remains to consider the actions of the
group elements (\ref{transvection}) and (\ref{symptransvection}).
%\begin{eqnarray}
%&x_1\mapsto x_1+\alpha y_1,\ y_1\mapsto y_1 &\label{transvection}\\
%&\mathrm{and}&\nonumber\\
%&x_1\mapsto x_1+\alpha x_2,\ y_1\mapsto y_1,\ x_2\mapsto x_2,\
%y_2\mapsto \alpha y_1+y_2.&\label{symptransvection}
%\end{eqnarray}

%Let $g_1(\alpha)$ denote the element represented in (\ref{transvection}), and %let $g_2(\alpha)$ denote the element represented in (\ref{symptransvection}).
Let
$f=f_0{f_1}^2\cdots {f_{t-1}}^{2^{t-1}}$ be an $r$-admissible SBF, and let
$f_j\in\overline P^{\lambda_j}_{\ell_j}$.
First we will examine the action of $g_1(\alpha)$ or $g_2(\alpha)$ on the $j^\mathrm{th}$
digit $f_j$ assuming there is no carry from the $(j-1)^\mathrm{th}$ digit
to the $j^\mathrm{th}$ digit.  Then we consider the effect of such a carry.

In calculating $g_1(\alpha)f_j$, it is sufficient to write just the part of $f_j$ involving $x_1$ or $y_1$, except when $1\in R$.  Whenever we get a square term, $y_1^2$ or $x_2^2$, we replace it by $1$, reflecting the carry to $f_{j+1}$.  Thus we compute $g_1(\alpha)f_j$ modulo $(y_1^2-1)$. Recall $w_1=x_1y_1$.  We have
\begin{equation}\label{sigmadec}
g_1(\alpha)w_1=w_1+\alpha y_1^2\equiv w_1+\alpha
\end{equation}
\begin{equation}
\begin{aligned}
g_1(\alpha)(w_1+w_2)&=(w_1+w_2)+\alpha y_1^2\equiv (w_1+w_2)+\alpha\\
g_1(\alpha)x_1&=x_1+\alpha y_1\\
g_1(\alpha)y_1&=y_1\\
g_1(\alpha)(1)&=1.
\end{aligned}
\end{equation}
In each case we get a linear combination of elements of
$\overline P^{\lambda_j}_{\ell_j}$; that is, $\sigma_j$ did decrease in one term of (\ref{sigmadec}), but it never increased.
Similarly, we present the following calculations, exhaustively showing
the action of $g_2(\alpha)$ on the portion of $f_j$ involving
%factor $f$, which does not involve
$x_1,\ y_1,\ x_2,$ and $y_2$.
% nor $x_4$ nor $y_4$ in (\ref{eq3}),
%nor $x_3$ nor $y_3$ in (\ref{eq3})--(\ref{eq7}).
The calculation is modulo $\left((y_1^2-1),(x_2^2-1)\right)$.
\begin{eqnarray}
g_2(\alpha)w_1w_2&=&(x_1+\alpha x_2)y_1x_2(y_2+\alpha y_1)\nonumber\\
&=&w_1w_2+\alpha y_1y_2x_2^2+\alpha x_1x_2y_1^2+\alpha^2y_1^2x_2^2\nonumber\\
&\equiv&w_1w_2+\alpha y_1y_2+\alpha x_1x_2+\alpha^2\label{eq1}
\\%\end{eqnarray}\begin{eqnarray}
g_2(\alpha)(w_1+w_2)&=& (w_1+w_2)+\alpha x_2y_1+\alpha x_2y_1 \nonumber \\
  &=&(w_1+w_2)
\\%\end{eqnarray}\begin{eqnarray}
g_2(\alpha)(w_1+w_3)(w_2+w_4)&=& (w_1+w_3)(w_2+w_4)+\alpha x_2y_1w_4+\nonumber\\
&&\alpha w_3x_2y_1+\alpha y_1y_2x_2^2+\nonumber\\&&\alpha x_1x_2y_1^2+
\alpha^2y_1^2x_2^2\nonumber\\
&\equiv&(w_1+w_3)(w_2+w_4)+\alpha y_1y_2+\nonumber\\
&&\alpha x_1x_2+\alpha x_2y_1(w_3+w_4)+
\alpha^2
  \label{eq3}
\end{eqnarray}\begin{eqnarray}
g_2(\alpha)(w_1+w_3)w_2&\equiv&(w_1+w_3)w_2+ \alpha w_3x_2y_1+\nonumber\\
&&\alpha y_1y_2+\alpha x_1x_2+\alpha^2\label{eq4}
\end{eqnarray}\begin{eqnarray}
g_2(\alpha)(w_1+w_3)&=&(w_1+w_3)+\alpha x_2y_1   \\
g_2(\alpha)(w_1+w_3)x_2&\equiv& (w_1+w_3)x_2 +\alpha y_1\label{eq6}
\end{eqnarray}\begin{eqnarray}
g_2(\alpha)(w_1+w_3)y_2&\equiv& (w_1+w_3)y_2 +\alpha y_1x_2y_2+\nonumber\\
&&\alpha x_1+\alpha y_1w_3+\alpha^2x_2 \nonumber\\
&=&(w_1+w_3)y_2 +\alpha y_1(w_2+w_3)+\nonumber\\
&&\alpha x_1+\alpha^2x_2 \label{eq7}
\end{eqnarray}\begin{eqnarray}
g_2(\alpha)w_1&=& w_1+\alpha x_2y_1   \\
g_2(\alpha)w_1x_2&\equiv& w_1x_2+\alpha y_1\label{eq9}  \\
g_2(\alpha)w_1y_2&=& w_1y_2 +\alpha y_1w_2 +\alpha x_1+\alpha^2 x_2\label{eq10}  \\
g_2(\alpha)x_1x_2&\equiv& x_1x_2 +\alpha\label{eq11}  \\
g_2(\alpha)x_1y_2&=& x_1y_2 +\alpha (w_1+w_2)+\alpha^2y_1x_2\label{eq12}  \\
g_2(\alpha)x_2y_1&=&x_2y_1\label{eq13}.
\end{eqnarray}
We see that in no term does $\sigma_j$ increase. Notice that $s_j$
never increases under the action of $\Sp(V)$, but decreases in the
case of a carry from the $(j-1)^\mathrm{th}$ to the $j^\mathrm{th}$
digit.  Thus, $\ell_j$ never decreases, where
$\ell_j=(m-r)\delta(r\le m)+2m-r-s_j$. Therefore, not considering
any carries into a digit, $\overline{P}^{\lambda_j}_{\ell_j}$ is
stable under the actions of $g_1(\alpha)$ and $g_2(\alpha)$.

We also take into account that in some terms, $y_1^{2^j}$ might be
carried from $f_{j-1}$ to $f_j$, and possibly also $x_2^{2^j}$ in
the case of $g_2(\alpha)$.  The carry is caused by the action of
$g_1(\alpha)$ or $g_2(\alpha)$ on $f_{j-1}$.  In these cases,
$\sigma_j$ might increase.  We consider a carry of the exponent
$b_1$ of $y_1$, as the case for $x_2$ is similar.  The carry reduces
$\lambda_{j-1}$ by $2$ and increases $\lambda_j$ by $1$;  therefore
$s_j$ is decreased by $1$, and $\ell_j$ is increased by $1$. The
following illustrates that $\sigma_j$ increases by at most $1$ when
a factor of $y_1$ is carried from $f_{j-1}$ to $f_j$.
\begin{eqnarray*}
x_1&\to&w_1\\
y_1&\to&1\\
1&\to&y_1\\
w_1&\to&x_1\\
w_1+w_3&\to&x_1+y_1w_3
\end{eqnarray*}
In summary, $\ell_j$ increased by $1$ in each case, while $\sigma_j$
increased, only in the first case and the second term of the last
case. Thus $\sigma_j\le\ell_j$ is still true.
%\qed
\end{proof}

\noindent{\bf Proof of Lemma~\ref{submodule}.} The calculations here
are similar to those in the proof of Lemma~\ref{submodule lemma}. In
fact the calculations are easier since they are done modulo $X_i^2$
and $Y_i^2$, which means that there are no carries to
consider. We will not repeat the detailed computations here.\qed
\vspace{0.1in}

\begin{Lemma}\label{Cinside} $\CC_r\subseteq \MM_r$.
\end{Lemma}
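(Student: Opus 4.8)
The goal is to show $\CC_r\subseteq\MM_r$. Recall that $\CC_r$ is defined as the $k\Sp(V)$-submodule of $k[\PP]$ generated by the characteristic function of a single element of $\II_r$, and by Lemma~\ref{submodule lemma} we already know $\MM_r$ is a $k\Sp(V)$-submodule. Since $\Sp(V)$ acts transitively on $\II_r$, it therefore suffices to exhibit one convenient element $X\in\II_r$ whose characteristic function $\chi_X$ lies in $\MM_r$; the whole orbit, and hence the submodule it generates, will then be contained in $\MM_r$ automatically.

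The plan is to choose $X$ to be the most symmetric coordinate subspace available, and to write down $\chi_X$ explicitly as a polynomial, as was done for $W_0$ in equation (\ref{fav_subspace}). For $r\le m$ I would take the totally isotropic subspace cut out by setting a suitable collection of $x_i$'s equal to zero (so that $\chi_X$ is a product of factors $1-x_i^{q-1}$), and for $r\ge m$ I would take the complement of such a subspace, using the duality $\delta$ from Section~\ref{wedges} to reduce to the isotropic case if convenient. The next step is to expand $\chi_X=\prod(1-x_i^{q-1})$ into its monomial form and then rewrite each monomial in terms of the digit factorization (\ref{factorize}): since $q-1=2^t-1$, each factor $x_i^{q-1}=x_i^{2^t-1}=x_i x_i^2 x_i^4\cdots x_i^{2^{t-1}}$ contributes the same linear square-free factor $x_i$ to every digit $f_0,\dots,f_{t-1}$. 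This makes the $\HH$-type of $\chi_X$, and more importantly the levels $\ell_j$ of its digits, transparent to compute.

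The heart of the verification is then to check that $\chi_X$ is $r$-admissible, i.e. that condition (a), $f_j\in\phi(P^{\lambda_j}_{\ell_j})$ with $\ell_j=(m-r)\delta(r\le m)+(2m-r-s_j)$, and condition (b), that the $\HH$-type of $\chi_X$ is $\le(2m-r,\dots,2m-r)$, both hold. Because $\chi_X$ is a single monomial (a product of $x_i$'s and their powers), its class is $(\rho,\sigma,\tau,\upsilon)=(0,0,\tau,\upsilon)$ with $\sigma=0$, so the level of each digit is $0$; one then only needs the inequality $0\le\ell_j$, which follows directly from the formula for $\ell_j$ once $s_j$ is computed from the chosen subspace. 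Condition (b) becomes a comparison of the constant $\HH$-type of $\chi_X$ against $(2m-r,\dots,2m-r)$, again a direct arithmetic check. Since $r$-admissible functions lie in the span of $r$-admissible SBFs (noted after the definition), membership of $\chi_X$ in $\MM_r$ follows.

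The main obstacle I anticipate is bookkeeping rather than conceptual: carefully identifying the right subspace $X$ so that the exponent vector of $\chi_X$ yields exactly the type $\llambda$ whose associated $\HH$-type $\sss$ makes both (a) and (b) hold with equality or the correct inequality, and handling the two regimes $r\le m$ and $r\ge m$ uniformly (the $\delta(r\le m)$ term in $\ell_j$ is precisely what the duality $\delta$ is there to reconcile). In particular one must confirm that the chosen $X$ really is totally isotropic (for $r\le m$) or a complement of such (for $r\ge m$) with respect to $b$, which constrains which coordinates may be set to zero; getting the symplectic-form condition and the admissibility arithmetic to align is the delicate point.
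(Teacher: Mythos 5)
Your overall strategy is the same as the paper's: pick one element of $\II_r$, expand its characteristic function into monomials, check each monomial is $r$-admissible, and conclude using transitivity of $\Sp(V)$ on $\II_r$ together with the fact that $\MM_r$ is a $k\Sp(V)$-submodule (Lemma~\ref{submodule lemma}). For $r\geq m$ your plan works essentially verbatim: the subspace $x_1=\cdots=x_{2m-r}=0$ has codimension $2m-r\leq m$, its characteristic function is $\prod_{i=1}^{2m-r}(1-x_i^{q-1})$, every digit of every monomial in the expansion involves only distinct $x_i$'s, hence has class $(0,0,\tau,\upsilon)$ and level $0$, and admissibility reduces to $\ell_j\geq 0$ plus the obvious $\HH$-type bound.

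However, there is a genuine gap in the case $r<m$. A totally isotropic $r$-subspace with $r<m$ has codimension $2m-r>m$, so it cannot be cut out by setting only $x_i$'s to zero --- there are only $m$ of them. The subspace the paper uses is $x_1=\cdots=x_m=y_1=\cdots=y_{m-r}=0$, whose characteristic function (\ref{charfunctioneq}) necessarily contains factors $(1-y_i^{q-1})$ for $1\leq i\leq m-r$. Consequently a monomial in the expansion may contain both $x_i^{q-1}$ and $y_i^{q-1}$ for the same index $i$, so each of its digits contains the factor $w_i=x_iy_i$; such a digit has $\sigma_j=|A\cap B|$ where $A,B$ are the index sets of the $x$- and $y$-factors, and this can be as large as $m-r$, not $0$. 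Your claim that ``the level of each digit is $0$'' therefore fails exactly in this regime, and with it the reduction of condition (a) to the trivial inequality $0\leq\ell_j$. The correct verification is the bound $\sigma_j\leq m-r$ (which holds because $B\subseteq\{1,\ldots,m-r\}$) combined with $\ell_j=(m-r)+(2m-r-s_j)\geq m-r$; this is precisely why the term $(m-r)\delta(r\leq m)$ appears in the definition of $\ell_j$, a fact your argument would leave unexplained. The duality $\delta$ of Section~\ref{wedges} does not rescue the plan here, since it relates exterior powers in degrees $\lambda$ and $2m-\lambda$ rather than producing an isotropic $r$-space defined by $x$-coordinates alone.
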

\begin{proof}
 Since
$\CC_r$ is generated as a $k\Sp(V)$-submodule of $k[\mathcal P]$ by
the characteristic function of any element of $\Ii_r$, we pick one
such element and write its characteristic function as
\begin{eqnarray}%\label{charfunction}
\nonumber (1-{x_1}^{q-1})\cdots(1-{x_m}^{q-1})
(1-{y_1}^{q-1})\cdots(1-{y_{m-r}}^{q-1}) &\mathrm{if}&r<m\\
(1-{x_1}^{q-1})\cdots(1-{x_{2m-r}}^{q-1})\hspace{1 in} &\mathrm{if}&r\ge
m.\label{charfunctioneq}
\end{eqnarray}
Each monomial in the expansion of this function is in $\MM_r$.
Indeed, take any nonconstant monomial $f$ in the expansion of either
characteristic function above, and assume that its $\HH$-type is
$(s_0,s_1,\ldots,s_{t-1})$.  Then
$(s_0,s_1\ldots,s_{t-1})\le(2m-r,2m-r,\ldots,2m-r)$.  Also, for each
digit of $f$, $\sigma_j=0$ if $r\ge m$, and $\sigma_j\le m-r$ if
$r\le m$.  Since $2m-r-s_j$ is nonnegative, it is clear that
$f_j\in\overline P^{\lambda_j}_{\ell_j}$, where
$\ell_j={(m-r)\delta(r\le m)+(2m-r-s_j)}$.  Since also the
$\HH$-type is at most $(2m-r,\ldots,2m-r)$, the monomial is
$r$-admissible. Therefore $\CC_r\subseteq \MM_r$.
\end{proof}

In order to prove Theorem~\ref{basis_thm}, we will show that every
$r$-admissible SBF can be obtained from an element of $\CC_r$ by
applying operators from the group ring $k\Sp(V)$.

The next two lemmas provide us with operators from the group
ring which allow us to control the shape of a polynomial.
The utility of these operators lies in the fact that they modify
factorizable functions in only one digit.

\begin{Lemma}
\label{shifts}
For $0\leq j\leq t-1$, let
\begin{equation}\label{shiftoper}
g(j) = \sum_{\mu\in k^{\times}} \mu^{2^j}g_1(\mu^{-1})\in k[{\rm Sp}(V)].
\end{equation}
Given any basis monomial $f=x_1^{a_1}y_1^{b_1}\cdots
x_m^{a_m}y_m^{b_m}$ of $k[V]$, we have
$$g(j) f= \left\{\begin{array}{ll}
    0, & \textrm{\emph{if the $j^\mathrm{th}$ digit of $a_1$ is 0,}} \\
    x_1^{a_1- 2^j}y_1^{b_1+  2^j}x_2^{a_2}y_2^{b_2}\cdots x_m^{a_m}y_m^{b_m}, & \textrm{\emph{if the
$j^\mathrm{th}$ digit of $a_1$ is 1.}}
            \end{array}\right. $$
\end{Lemma}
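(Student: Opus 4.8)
The plan is to use the fact that $g_1(\mu^{-1})$ substitutes $x_1\mapsto x_1+\mu^{-1}y_1$ and fixes every other coordinate, so the tail $x_2^{a_2}y_2^{b_2}\cdots x_m^{a_m}y_m^{b_m}$ is inert and the whole computation reduces to the bivariate factor $x_1^{a_1}y_1^{b_1}$. First I would binomially expand
$$
g_1(\mu^{-1})\bigl(x_1^{a_1}y_1^{b_1}\bigr)=(x_1+\mu^{-1}y_1)^{a_1}y_1^{b_1}
=\sum_{c=0}^{a_1}\binom{a_1}{c}\mu^{-c}\,x_1^{a_1-c}y_1^{b_1+c},
$$
multiply by $\mu^{2^j}$, and sum over $\mu\in k^\times$. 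Interchanging the two summations confines all dependence on $\mu$ to the inner factor $\sum_{\mu\in k^\times}\mu^{2^j-c}$, leaving $g(j)f$ as a $k$-linear combination of the monomials $x_1^{a_1-c}y_1^{b_1+c}x_2^{a_2}\cdots y_m^{b_m}$ with coefficients $\binom{a_1}{c}\bigl(\sum_{\mu}\mu^{2^j-c}\bigr)$.

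The crucial step is the evaluation of the character sum: $\sum_{\mu\in k^\times}\mu^{2^j-c}$ equals $1$ when $2^j-c\equiv 0\pmod{q-1}$ and $0$ otherwise, using that $q-1$ is odd so $|k^\times|\equiv 1\pmod 2$. Since $q>2$ is in force in this section, $2^j$ is a \emph{nonzero} residue modulo $q-1$ (indeed $1\le 2^j\le 2^{t-1}<q-1$ and $\gcd(2^j,q-1)=1$), so the only $c$ in the range $0\le c\le a_1$ satisfying the congruence is $c=2^j$, and then only if $2^j\le a_1$. This collapses the entire sum to the single term $\binom{a_1}{2^j}\,x_1^{a_1-2^j}y_1^{b_1+2^j}$ times the tail, or to $0$ when $2^j>a_1$ (in which case the $j^{\mathrm{th}}$ binary digit of $a_1$ is automatically $0$).

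Finally I would read off the coefficient modulo $2$ by Lucas' theorem: as $2^j$ has a single nonzero binary digit, $\binom{a_1}{2^j}\equiv(a_1)_j\pmod 2$, where $(a_1)_j$ denotes the $j^{\mathrm{th}}$ binary digit of $a_1$. Hence the surviving term vanishes precisely when $(a_1)_j=0$ and equals $x_1^{a_1-2^j}y_1^{b_1+2^j}x_2^{a_2}\cdots y_m^{b_m}$ when $(a_1)_j=1$, which is exactly the claimed dichotomy.

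I expect the main obstacle to be purely one of bookkeeping around the character sum rather than anything deep: one must verify that no index $c\ne 2^j$ in the admissible range contributes, and this is exactly the point where the hypothesis $q>2$ is indispensable, since for $q=2$ the group $k^\times$ is trivial and $g(j)$ degenerates to the single transvection $g_1(1)$ (the reason that case is handled separately). A secondary point to keep honest is that $y_1^{b_1+2^j}$ is to be read as an element of $k[V]$, reduced via $y_1^q=y_1$ should $b_1+2^j$ exceed $q-1$; when $g(j)$ is applied to the functions arising in our argument the relevant digit of $b_1$ is $0$, so $b_1+2^j\le q-1$ and the displayed monomial requires no reduction.
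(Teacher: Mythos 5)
Your proof is correct and follows essentially the same route as the paper: expand $(x_1+\mu^{-1}y_1)^{a_1}$ binomially and use orthogonality of the character sum $\sum_{\mu\in k^\times}\mu^{2^j-c}$ to isolate the single term $c=2^j$. The only cosmetic difference is that the paper computes the case $j=0$ (where the surviving coefficient is just $a_1 \bmod 2$) and obtains general $j$ by applying the Frobenius automorphism, whereas you treat general $j$ directly and invoke Lucas' theorem for $\binom{a_1}{2^j}$; both are valid, and your closing remarks about the role of $q>2$ and the possible reduction of $y_1^{b_1+2^j}$ are accurate.
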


\begin{proof} We first prove the lemma for the $j=0$ case. If $a_1=0$, then
clearly we have $g (0)f=(\sum_{\mu\in k^{\times}}\mu)f=0$. So we
assume that $a_1 > 0$.
\begin{eqnarray*}
g(0) f &= & \sum_{\mu\in k^{\times}} \mu (x_1 + \mu^{-1}
y_1)^{a_1}y_1^{b_1}x_2^{a_2}y_2^{b_2}\cdots
x_m^{a_m}y_m^{b_m}\\
&= &\sum_{\mu\in k^{\times}} \mu \left(x_1^{a_1} + {a_1 \choose 1}\mu^{-1}x_1^{a_1
-1}y_1+{a_1\choose 2}\mu^{-2} x_1^{a_1 -2}y_1^2 +\cdots\right)\\
&\cdot& y_1^{b_1}x_2^{a_2}y_2^{b_2}\cdots
x_m^{a_m}y_m^{b_m}\\
&= & a_1x_1^{a_1-1}y_1^{b_1
+1}x_2^{a_2}y_2^{b_2}\cdots x_m^{a_m}y_m^{b_m}.
\end{eqnarray*}
Therefore the lemma is proved in the case where $j=0$. The general
case follows from the $j=0$ case by applying the Frobenius
automorphism.
\end{proof}

\begin{Lemma}\label{project}
Let $1\le i\le m$, let $0\le j\le t-1$, and let
$f={x_1}^{a_1}{y_1}^{b_1}\ldots {x_m}^{a_m}{y_m}^{b_m}$ be a basis monomial of
$k[V]$.  Then there exist projectors
$p_{i,j}^{(1)},\ p_{i,j}^{(2)}$, and $p_{i,j}^{(3)}$ such that
\begin{eqnarray*}
p_{i,j}^{(1)}(f)&=&\left\{
\begin{array}{ll}
f,&\mathrm{if\ the\ } j^\mathrm{th}\ \mathrm{digits\ of}\ a_i\ \mathrm{and}\ b_i\ \mathrm{are} \ 1\ \mathrm{and}\ 0\ \mathrm{respectively,}\\
0,&\mathrm{otherwise.}
\end{array}\right.
\\p_{i,j}^{(2)}(f)&=&\left\{
\begin{array}{ll}
f,&\mathrm{if\ the\ } j^\mathrm{th}\ \mathrm{digits\ of}\ a_i\ \mathrm{and}\ b_i\ \mathrm{are} \ 0\ \mathrm{and}\ 1\ \mathrm{respectively,}\\
0,&\mathrm{otherwise.}
\end{array}\right. \\
p_{i,j}^{(3)}(f)&=&\left\{
\begin{array}{ll}
f,&\mathrm{if\ the\ } j^\mathrm{th}\ \mathrm{digits\ of}\ a_i\ \mathrm{and}\ b_i\ \mathrm{are\ both} \ 0\ \mathrm{or\ both}\ 1,\\
0,&\mathrm{otherwise.}
\end{array}\right.
\end{eqnarray*}
\end{Lemma}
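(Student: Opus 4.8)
The plan is to reduce to a two-variable problem on the $i$th hyperbolic plane and to assemble the projectors from the shift operator of Lemma~\ref{shifts} together with the coordinate swap. Since each of the three desired maps alters only the exponents $a_i,b_i$ of $x_i,y_i$ and fixes the remaining coordinates, I would work inside the subgroup of $\Sp(V)$ acting as $\Sp_2(\F_q)$ on $\langle e_i,f_i\rangle$ and trivially elsewhere; it is generated by the diagonal elements (\ref{gen1}), the swap (\ref{gen3}), and the transvection (\ref{transvection}), all transported to index $i$ by a permutation (\ref{gen2}). Conjugating $g(j)$ of Lemma~\ref{shifts} by such a permutation yields an operator $g_i(j)\in k\Sp(V)$ that annihilates a basis monomial when the $j^{\mathrm{th}}$ digit of $a_i$ is $0$ and otherwise moves a $2^j$ from the $x_i$-exponent into the $y_i$-exponent; conjugating further by the swap $\sigma_i$ of (\ref{gen3}) gives the mirror operator $h_i(j)=\sigma_i\,g_i(j)\,\sigma_i$, which detects the $j^{\mathrm{th}}$ digit of $b_i$ and moves a $2^j$ from the $y_i$-exponent back into the $x_i$-exponent.

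I would then set $p_{i,j}^{(1)}=h_i(j)\,g_i(j)$ and $p_{i,j}^{(2)}=g_i(j)\,h_i(j)$ and verify the claimed action by running through the four possibilities for the pair of $j^{\mathrm{th}}$ digits of $(a_i,b_i)$ using Lemma~\ref{shifts}. If the $j^{\mathrm{th}}$ digit of $a_i$ is $0$ then $g_i(j)$ already kills $f$; if the pair is $(1,0)$ then $g_i(j)$ transfers the bit cleanly and $h_i(j)$ transfers it straight back, returning $f$; if the pair is $(1,1)$ then adding $2^j$ to the $y_i$-exponent triggers a carry that clears its $j^{\mathrm{th}}$ digit (except in the top-degree situation discussed below), so that $h_i(j)$ annihilates the result. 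Thus $p_{i,j}^{(1)}$ retains exactly the monomials with digit pair $(1,0)$ and kills the rest, and symmetrically $p_{i,j}^{(2)}$ retains exactly those with pair $(0,1)$; idempotence is immediate once the action on monomials is known.

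Because the four digit pairs exhaust all cases, the three target subspaces partition the basis monomials, so I would simply define $p_{i,j}^{(3)}=\mathrm{id}-p_{i,j}^{(1)}-p_{i,j}^{(2)}$, where $\mathrm{id}$ is the identity element of $\Sp(V)$. This lies in $k\Sp(V)$, and, granting the first two, it is precisely the projector onto the monomials whose $j^{\mathrm{th}}$ digits of $a_i$ and $b_i$ agree, as required. The three elements are then orthogonal idempotents summing to the identity, which is exactly the structure the later lemmas will exploit.

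The delicate point, and the step I expect to be the main obstacle, is the carry bookkeeping in the $(1,1)$ case, specifically the top-degree wraparound forced by the functional identity $z^q=z$. When the $y_i$-exponent equals $q-1$, adding $2^j$ gives $y_i^{\,q-1+2^j}=y_i^{\,2^j}$, which \emph{restores} the $j^{\mathrm{th}}$ digit rather than clearing it, so the naive composite $h_i(j)\,g_i(j)$ fails to annihilate precisely the full-exponent monomials (and symmetrically for $p_{i,j}^{(2)}$ when $a_i=q-1$). Making the construction exactly correct on \emph{every} basis monomial therefore requires controlling these wraparound terms, either by isolating and treating separately the monomials carrying a full exponent $q-1$ in position $i$, or by invoking the principle from Section~\ref{subs} that such a carry strictly lowers the $\HH$-type, so that the spurious contributions are of strictly lower order and can be absorbed. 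I would expect the cleanest route to be a direct case check that singles out the full-exponent monomials and confirms the stated equalities there by hand.
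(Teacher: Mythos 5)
Your construction is the same as the paper's: set $p^{(1)}_{i,j}=g'(j)g(j)$ and $p^{(2)}_{i,j}=g(j)g'(j)$ (transported to index $i$ by a permutation of subscripts), where $g'(j)$ is the mirror of the shift operator of Lemma~\ref{shifts}, and then $p^{(3)}_{i,j}=1-p^{(1)}_{i,j}-p^{(2)}_{i,j}$. The paper's own proof consists of exactly these formulas together with the words ``it is easy to see,'' so in that respect you have reproduced the intended argument.

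The wraparound problem you raise in your final paragraph is, however, genuine, and the resolution you lean towards --- ``a direct case check that singles out the full-exponent monomials and confirms the stated equalities there by hand'' --- cannot succeed, because on those monomials the stated equalities are simply false. Take $q=4$ and $f=x_1y_1^3x_2^2$, so that the $0^{\mathrm{th}}$ digits of $(a_1,b_1)=(1,3)$ are $(1,1)$ and the lemma predicts $p^{(1)}_{1,0}f=0$. But $g(0)f=y_1^4x_2^2=y_1x_2^2$ as a function on $V$, and $g'(0)$ applied to this gives $x_1x_2^2\neq 0$. In general the composite misbehaves precisely when $b_i=q-1$ (for $p^{(1)}_{i,j}$) or $a_i=q-1$ (for $p^{(2)}_{i,j}$) and the $j^{\mathrm{th}}$ digit of the other exponent is $1$: the binary carry cascades past the top digit and the reduction $z^q=z$ restores the $j^{\mathrm{th}}$ digit instead of clearing it, exactly as you feared. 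The correct repair is the second alternative you mention: the offending output (e.g.\ $x_1^{a_1}M$ in place of $0$) arises from a carry from the $(t-1)^{\mathrm{th}}$ digit back to the $0^{\mathrm{th}}$, hence by the discussion in subsection~\ref{subs} it is a basis monomial of strictly lower $\HH$-type ($(1,1)$ versus $(2,2)$ in the example above). So the lemma is true, and is only ever used in the sequel, as an identity modulo the span of basis monomials of strictly lower $\HH$-type; an exact statement requires either excluding the monomials with a full exponent $q-1$ in position $i$ or carrying the lower-order correction terms explicitly. Your write-up should commit to that weaker formulation rather than promising a by-hand verification that would in fact refute the displayed formulas.
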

\begin{proof}
For $i=1$, we use the operator $g(j)$ from Lemma~\ref{shifts} and let $g'(j)$ denote the analogous operator which shifts $2^j$ from $b_1 $ to $a_1$.  Then it is easy to see that
\begin{eqnarray*}
p_{1,j}^{(1)}&=&g'(j)g(j)\\
p_{1,j}^{(2)}&=&g(j)g'(j)\\
p_{1,j}^{(3)}&=&1-p_{1,j}^{(1)}-p_{1,j}^{(2)}.
\end{eqnarray*}
The same proof works for the cases where $i>1$.
\end{proof}
We will sometimes drop the second subscript when it is clear from the context that it is $j$.

The next five lemmas describe in detail the generation of new
functions from existing ones in $k\Sp(V)$-submodules of $k[\PP]$. In
the proofs of some of these lemmas, we will use the
element $g_2=g_2(1)$ of $\Sp(V)$ defined in (\ref{symptransvection})
and the element
\begin{equation*}
g'_2: x_1\mapsto x_1,\quad x_2\mapsto x_1+ x_2,\quad y_1\mapsto y_1+y_2,\quad
y_2\mapsto y_2.
\end{equation*}

\begin{Lemma}\label{permutesubscripts}
Let $$f=f_0 f_1^2 f_2^{2^2}\ldots f_j^{2^j}\ldots  f_{t-1}^{2^{t-1}}$$ be an SBF of
$\HH$-type $(s_0,\ldots,s_j,\ldots,s_{t-1})$, where $f_j$ is of the class
$(\rho,\sigma,\tau,\upsilon)$.  Let $f'_j$ be any other function of the class
$(\rho,\sigma,\tau,\upsilon)$.  Then there is a group ring element $g\in k\Sp(V)$ such that
$$gf\equiv f_0 {f_1}^2 {f_2}^{2^2}\ldots {f'_j}^{2^j}\ldots  {f_{t-1}}^{2^{t-1}}$$
modulo the span  of SBFs of lower $\HH$-types.
\end{Lemma}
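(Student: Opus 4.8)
The plan is to connect the two digits $f_j$ and $f'_j$, which belong to the same class $(\rho,\sigma,\tau,\upsilon)$ and hence share the same degree $\lambda_j$ and the same level $\sigma$, by a chain of elementary moves on the index data $(R,R',S,T,U)$, the pairing $R\leftrightarrow R'$, and the choices $Z_i\in\{X_i,Y_i\}$ for $i\in T$, and to realize each move by a group ring element that alters only the $j^{\mathrm{th}}$ digit, modulo SBFs of strictly lower $\HH$-type. Since the symmetric group on $\{1,\dots,m\}$ together with the interchanges $x_i\leftrightarrow y_i$ acts transitively on the configurations of a fixed class, it suffices to produce, for each such generator, an operator in $k\Sp(V)$ whose leading effect on $f$ is to apply that generator in the $j^{\mathrm{th}}$ digit alone. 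All same-$\HH$-type discrepancies will then be controlled using the level-preserving identities \eqref{subs1}, \eqref{subs2} and \eqref{subs3}, while every carry produced in the rewriting lowers the $\HH$-type by the analysis of Section~\ref{subs}.

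The key device for confining an action to the $j^{\mathrm{th}}$ digit is the Frobenius-weighted average, of which $g(j)=\sum_{\mu\in k^\times}\mu^{2^j}g_1(\mu^{-1})$ from Lemma~\ref{shifts} is the prototype: because $\sum_{\mu}\mu^{k}=0$ unless $k\equiv0\pmod{q-1}$, the only surviving contribution shifts the $2^j$-part of an exponent, that is, it acts on digit $j$ and leaves the square-free digits $j'\neq j$ intact. First I would observe that $g(j)$ and its mirror $g'(j)$ effect the interchange $x_1\leftrightarrow y_1$ in digit $j$, and that conjugating these by a global subscript permutation \eqref{gen2} sending $1\mapsto i$ transports them to the interchange $x_i\leftrightarrow y_i$ in digit $j$; crucially the outer permutation and its inverse cancel on every digit $j'\neq j$, so the net operator changes only digit $j$. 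This already realizes the flips $Z_i\mapsto$ (the other choice) for $i\in T$, and, applied to one index of a pair, lets us adjust the pairing data.

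Next I would treat the subscript transpositions. Here I would form the analogous averages $\sum_{\mu}\mu^{2^j}g_2(\mu^{-1})$ and $\sum_\mu\mu^{2^j}g'_2(\mu^{-1})$ of the symplectic transvections \eqref{symptransvection}; their action on the relevant class monomials is exactly what is tabulated in \eqref{eq1}--\eqref{eq13} in the proof of Lemma~\ref{submodule lemma}, now with only the $2^j$-graded part retained. Composing three such transvection averages, as in the expression of a Weyl group element as a product of transvections, produces an operator whose leading term swaps the subscripts $1$ and $2$ in digit $j$ only. Together with the flips of the previous paragraph, these localized transpositions generate the full configuration-changing action in digit $j$, so that some product $g$ of our operators carries $f_j$ to $f'_j$ in the leading term.

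The main obstacle is the bookkeeping of error terms in this last step. Unlike a global permutation, a localized transvection average does not act as a clean permutation: besides the desired leading monomial it produces further class functions of the same level $\sigma$, hence of the same $\HH$-type, together with lower-level and carried terms. I would argue that the carried terms are harmless because they have strictly smaller $\HH$-type by Section~\ref{subs}, and that the remaining same-level discrepancies can be reorganized, via the identities \eqref{subs1}--\eqref{subs3} and the relations between different pairings, so that they either cancel in the triple product or are absorbed into the target $f'_j$. The delicate point, which requires the most care, is to verify that after this reorganization exactly one leading term of the correct $\HH$-type survives and equals the digit-$j$-modified function, with everything else of strictly lower $\HH$-type; this is precisely what makes the asserted congruence hold.
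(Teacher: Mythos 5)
Your outline has the right shape---reduce to elementary moves (flips $x_i\leftrightarrow y_i$, transpositions of subscripts, repairings), realize each by an operator that touches only digit $j$, and discard carried terms as having lower $\HH$-type---and your use of the average $g(j)$ of Lemma~\ref{shifts} for the flips is exactly what the paper does. The gap is in your central device for the transpositions: the weighted average $\sum_{\mu\in k^\times}\mu^{2^j}g_2(\mu^{-1})$ does \emph{not} localize to digit $j$. The $g_1$-average works because $g_1(\mu^{-1})$ contains a single $\mu$-dependent substitution, so the character sum kills everything except the terms in which exactly $2^j$ is moved from the exponent of $x_1$ to that of $y_1$. But $g_2(\mu^{-1})$ contains two $\mu$-dependent substitutions ($x_1\mapsto x_1+\mu^{-1}x_2$ and $y_2\mapsto \mu^{-1}y_1+y_2$), so the surviving terms are those whose \emph{total} $\mu$-degree over all digits is congruent to $2^j$ modulo $q-1$. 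Besides the intended ``one substitution in digit $j$'' terms, this includes, for instance, the terms performing one substitution of each kind in digit $j-1$ (total degree $2\cdot 2^{j-1}=2^j$), which replace $x_1^{2^{j-1}}y_2^{2^{j-1}}$ by $x_2^{2^{j-1}}y_1^{2^{j-1}}$; generically no carry occurs, so these terms have the \emph{same} $\HH$-type as $f$ but a corrupted $(j-1)^{\mathrm{th}}$ digit, and they are not absorbed by the ``modulo lower $\HH$-type'' clause. (Terms with total degree $2^j+(q-1)$ survive as well.)

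The paper avoids this by applying the un-averaged $g_2$ and $g_2'$---which act on all digits---and then interleaving the projectors $p^{(\alpha)}_{i,k}$ of Lemma~\ref{project}, which are built from the cleanly localizing $g_1$-averages. These projectors do two jobs your proposal has no substitute for: inside digit $j$ they isolate the desired monomial from the other same-$\HH$-type terms produced by a transvection (e.g.\ $g_2(x_1y_1)=x_1y_1+y_1x_2$ already contains the unwanted same-type term $x_1y_1$, which must be projected away before $g_2'$ can be applied, as in $p_1^{(3)}g_2'p_1^{(2)}g_2-1$), and for each $k\neq j$ they kill every term whose $k^{\mathrm{th}}$ digit differs from $f_k$. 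Your hope that the residual same-type discrepancies ``cancel in the triple product or are absorbed via (\ref{subs1})--(\ref{subs3})'' cannot be realized: those identities only rewrite a class function as a sum of other functions of the same $\HH$-type (and the same or lower level), so they cannot dispose of a same-$\HH$-type residue, which is precisely what the asserted congruence forbids. You flag this verification as the delicate point but do not carry it out, and with the operators as you have set them up it cannot be completed; the projector mechanism (or an equivalent) is the missing idea.
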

\begin{proof}
Clearly if $i\in T$ for $f_j$, then we can use a shift operator to
exchange  $x_i$ and $y_i$ in that digit, leave all other digits
unchanged. We need to show further that we can permute the indices
${1,\ldots,m}$ of the variables in $f_j$, while keeping the
functions for all the other digits unchanged.  It suffices to show
that we can interchange the subscripts $1$ and $2$, if they belong
to any two of $\{R,S,T,U\}$, or if they both belong to $R$. In the
calculations below there are several notational points to keep in
mind. First, we use the projectors $p_{i,j}^{(\alpha)},\
\alpha=1,2,3,$ from Lemma~\ref{project}, dropping the second
subscript when it is understood to be $j$. Next, since we are
calculating modulo the span of SBFs of lower $\HH$-types, we can
ignore any term in which there is a carry from one digit to another.
Most importantly,  we shall write only the part of the digit $f_j$
involving $x_1,\ y_1,\ x_2,$ and $y_2,$ except that when $1\in R$ we
assume it is paired with $3\in R'$ and include $x_3$ and $y_3$. We
calculate:
\begin{eqnarray*}
(1\in S,\ 2\in U)\quad g_2(x_1y_1)&=&x_1y_1+y_1x_2\\
p_1^{(2)}g_2(x_1y_1)&=&y_1x_2\\
g'_2p_1^{(2)}g_2(x_1y_1)&=&(x_1+x_2)(y_1+y_2)\\
(p_1^{(3)}g'_2p^{(2)}_1g_2-1)(x_1y_1)&=&x_2y_2
\end{eqnarray*}\begin{eqnarray*}
(1\in T,\ 2\in U)\quad g_2(x_1)&=&x_1+x_2\\
(g_2-1)(x_1)&=&x_2\\
\end{eqnarray*}\begin{eqnarray*}
(1\in S,\ 2\in T)\quad g_2(x_1y_1y_2)&=&(x_1+x_2)y_1(y_1+y_2)\\
p_1^{(2)}g_2(x_1y_1y_2)&=&y_1x_2y_2
\end{eqnarray*}\begin{eqnarray*}
(1\in R,\ 2\in U,\ 3\in R')\\ g_2(x_1y_1+x_3y_3)&=&(x_1+x_2)y_1+x_3y_3\\
p_1^{(2)}g_2(x_1y_1+x_3y_3)&=&y_1x_2\\
g'_2p_1^{(2)}g_2(x_1y_1+x_3y_3)&=&(x_1+x_2)(y_1+y_2)\\
p_1^{(3)}g'_2p_1^{(2)}g_2(x_1y_1+x_3y_3)&=&x_1y_1+x_2y_2\\
\textrm{We interchanged $1$ and $2$.}&&
\end{eqnarray*}\begin{eqnarray*}
(1\in R,\ 2\in T,\ 3\in R')\\
g_2((x_1y_1+x_3y_3)y_2)&=&((x_1+x_2)y_1+x_3y_3)(y_1+y_2)\\
&=&y_1x_3y_3+(x_1+x_2)y_1y_2+y_2x_3y_3\\
p_2^{(3)}g_2((x_1y_1+x_3y_3)y_2)&=&y_1x_3y_3+y_1x_2y_2=y_1(w_2+w_3)\\
%p_1^{(2)}p_2^{(3)}g_2((x_1y_1+x_3y_3)y_2)&=&(x_3y_3)y_1+x_2y_1y_2\\
%&=&y_1(x_2y_2+x_3y_3)
\end{eqnarray*}\begin{eqnarray*}
(1\in R,\ 2\in S,\ 3\in R')\\ g_2(x_2y_2(x_1y_1+x_3y_3))&=&
x_2(y_1+y_2)
%\\&\cdot&
((x_1+x_2)y_1+x_3y_3))
\\
&=&x_1y_1x_2y_2+
y_1x_2x_3y_3+x_2y_2x_3y_3\\
p_1^{(2)}g_2(x_2y_2(x_1y_1+x_3y_3))&=&y_1x_2x_3y_3\\
g'_2p_1^{(2)}g_2(x_2y_2(x_1y_1+x_3y_3))&=&(y_1+y_2)(x_1+x_2)x_3y_3\\
p_1^{(3)}g'_2p_1^{(2)}g_2(x_2y_2(x_1y_1+x_3y_3))&=&(x_1y_1+x_2y_2)x_3y_3.\\
\textrm{We interchanged 2 and 3.}&&
\end{eqnarray*}

We can also interchange two subscripts in $R$. Let
$$f_j=(x_1y_1+x_3y_3)(x_2y_2+x_4y_4).$$ Then
\begin{eqnarray*}
g_2(f_j)&=&((x_1+x_2)y_1+x_3y_3)(x_2(y_1+y_2)+x_4y_4)\\
p_2^{(1)}g_2(f_j)&=&y_1x_2(x_3y_3+x_4y_4)\\
p_1^{(3)}g'_2p_2^{(1)}g_2(f_j)&=&(x_1y_1+x_2y_2)(x_3y_3+x_4y_4).
\end{eqnarray*}

It is important to remember that when $g_2$ or $g'_2$ is applied,
substitutions take place in all digits, as in (\ref{linsubs}), not
just in $f_j$, and carries must be performed. In the above we have
only shown the effect of substitution on $f_j$ and, since we are
calculating modulo the span of SBFs of lower $\HH$-type, we have set
equal to zero the terms of the resulting $j^\mathrm{th}$ digit in
which there is a carry to the $(j+1)^\mathrm{th}$ digit. To see that
this calculation gives the correct $j^\mathrm{th}$ digit of the
function obtained by the substitution in all digits, as in
(\ref{linsubs}), we recall the rewriting process described in
subsection \ref{subs}. After distributing the product into a sum of
monomials there are possibly carries to be performed. The result of
a carry will be a monomial of lower $\HH$-type, which we may
disregard since we are working modulo functions  of lower
$\HH$-type. Therefore, when computing the $j^\mathrm{th}$ digit of
the function obtained by substitition modulo functions of lower
$\HH$-types, we can treat as zero any contributions from carries
between the $(j-1)^\mathrm{th}$ and $j^\mathrm{th}$ digits.

We also use the projectors to remove those terms for which the
$k^\mathrm{th}$ digit, $k\ne j$, is not equal to $f_k$. Let $f'$ be
a factorizable function of the same $\HH$-type as $f$ and with
$k^\mathrm{th}$ digit equal to $f_k$. After application of $g_2$ to
$f'$  we end up, modulo functions of lower $\HH$-type, with a
certain sum of factorizable functions of the same $\HH$-type as $f$,
including one term equal to $f'$. If neither $x_1$ nor $y_2$ occurs
in $f_k$  then the $k^\mathrm{th}$ digit of every term will be equal
to $f_k$.  Suppose either $x_1$ or $y_2$ or $x_1y_2$ is a factor of
$f_k$. We treat as zero any term involving a carry from the
$(k-1)^{\rm th}$ to the $k^{\rm th}$ digit or from the $k^{\rm th}$
to the $(k+1)^{\rm th}$ digit. Then there is some choice of
$\alpha_k$, $1\leq \alpha_k\leq 3$, such that $p_{1k}^{(\alpha_k)}$
acts as the identity on $f'$ and kills any function in the above sum
whose $k$-th digit differs from $f_k$ in the variables $x_1$ and
$y_1$. After applying these projectors for all $k\neq j$, we end up
with a sum of factorizable functions whose $k^\mathrm{th}$  digits
are the same as those of $f$, for all $k\neq j$. Similarly, every
function arising from the application of $g'_2$ whose
$k^\mathrm{th}$ digit is unequal to $f_k$, for some $k\neq j$, can
be removed by a suitable projector.
%%%%%%%%%%%%%%%%%%%%%%%%%%%%%%%%%%%%%%%%%%%%%%%%%%%%
%  We get the original function back by taking $x_1$ from
%$x_1\mapsto x_1+x_2$ and $y_2$ from $y_2\mapsto y_1+y_2$.  We apply the projector
%$p_{1,k}^{(1)},\ p_{1,k}^{(2)},$ or $p_{1,k}^{(3)}$, according to which one preserves $f_k$.  If the term is produced from the $x_2$ of $x_1\mapsto x_1+x_2$, since there is no carry, there must be $x_2$ but no $x_1$ in the new function for the $k^\mathrm{th}$ digit, while there must have been $x_1$ but no $x_2$ in the original $f_k$.  Therefore $p_{1,k}^{(1)}$ acts as the identity on $f_k$ but kills the new term.  We similarly make sure that no new terms occur in $f_k$ with respect to $y_1$ and $y_2$.
We have shown that we can change $f_j$ to any other SBF of the same class, while keeping every other digit the same.
\end{proof}

\begin{Lemma}\label{R-up}
As in Lemma~\ref{permutesubscripts}, let $f_j$ be the $j^\mathrm{th}$ digit of an SBF $f$, and let the class of $f_j$ be $(\rho,\sigma,\tau,\upsilon),\ \tau\ge2$.  Then there exists a group ring element $g$ such that $gf$ is an SBF identical to $f$ (modulo the span of SBFs of lower $\HH$-types), except that $f_j$ is replaced by a function of class $(\rho+1,\sigma,\tau-2,\upsilon)$.
\end{Lemma}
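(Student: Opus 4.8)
The plan is to effect the class change by a single averaged application of the symplectic transvection $g_2(\alpha)$ from (\ref{symptransvection}), using the identity (\ref{eq12}) to trade two $T$-indices for one $R$-pair, and then to restore all the other digits by exactly the projector-and-carry bookkeeping already used in the proof of Lemma~\ref{permutesubscripts}.

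First I would invoke Lemma~\ref{permutesubscripts} to replace $f_j$, modulo the span of SBFs of lower $\HH$-type, by a function of the same class $(\rho,\sigma,\tau,\upsilon)$ in which two of the $\tau\geq2$ members of $T$ are the indices $1$ and $2$, occurring in $f_j$ as the factor $x_1y_2$; this uses the freedom to permute the indices $\{1,\dots,m\}$ and to select each $Z_i$ to be $x_i$ or $y_i$, and is available precisely because $\tau\geq2$. After this reduction indices $1$ and $2$ appear in $f_j$ only through the factor $x_1y_2$, so the rest of $f_j$ involves only indices $\geq3$. The governing identity is then (\ref{eq12}),
$$
g_2(\alpha)\,x_1y_2 = x_1y_2 + \alpha(w_1+w_2) + \alpha^2 y_1x_2 ,
$$
whose three terms carry the pair $\{1,2\}$ respectively as two $T$-indices, as a single $R$-pair $(w_1+w_2)$, and again as two $T$-indices; so only the middle term lies in the target class $(\rho+1,\sigma,\tau-2,\upsilon)$, and it leaves $\sigma$ and $\upsilon$ untouched.

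Next I would form the averaging operator
$$
D_j=\sum_{\alpha\in k^{\times}}\alpha^{-2^j}g_2(\alpha)\in k\Sp(V),
$$
the exact analogue of the operator $g(j)$ of Lemma~\ref{shifts} with $g_1$ replaced by $g_2$. Since the factor $x_1y_2$ enters $f$ as $(x_1y_2)^{2^j}$, the Frobenius turns (\ref{eq12}) into $(x_1y_2)^{2^j}+\alpha^{2^j}(w_1+w_2)^{2^j}+\alpha^{2^{j+1}}(y_1x_2)^{2^j}$, and the three character sums
$$
\sum_{\alpha\in k^{\times}}\alpha^{-2^j}=0,\qquad
\sum_{\alpha\in k^{\times}}1=q-1=1,\qquad
\sum_{\alpha\in k^{\times}}\alpha^{2^j}=0
$$
(all valid because $1\leq 2^j\leq 2^{t-1}<q-1$ when $q>2$) kill the two $T$-class terms and keep the $R$-pair term with coefficient $1$. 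Thus on the $j$-th digit $D_j$ replaces $x_1y_2$ by $(w_1+w_2)$ and, as $g_2$ involves only indices $1$ and $2$, leaves the remaining indices $\geq3$ of $f_j$ alone.

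The hard part will be to confirm that every digit $f_k$ with $k\neq j$ is returned unchanged, since $g_2(\alpha)$ substitutes simultaneously in all digits. Here I would reason exactly as in Lemma~\ref{permutesubscripts}: first discard all terms containing a carry, which have strictly lower $\HH$-type by subsection~\ref{subs}, and then apply the projectors $p_{1,k}^{(\alpha_k)}$ and $p_{2,k}^{(\beta_k)}$ of Lemma~\ref{project}, for each $k\neq j$, choosing $\alpha_k,\beta_k$ to fix the index-$1$ and index-$2$ patterns of $f_k$. The key observation that makes this routine is that, once carries are dropped, the $\alpha$-independent part of each substitution in a digit $k\neq j$ is exactly the part in which the index-$1$ and index-$2$ patterns are unchanged; the chosen projectors keep only that part, which equals $f_k$ and factors out of the sum over $\alpha$, so that the surviving average acts purely on the $j$-th digit as computed above. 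The resulting group-ring element $g$ is $D_j$ followed by these projectors, and $gf$ is, modulo SBFs of lower $\HH$-type, an SBF agreeing with $f$ in every digit except the $j$-th, whose class is now $(\rho+1,\sigma,\tau-2,\upsilon)$; a final application of Lemma~\ref{permutesubscripts} adjusts this digit to any prescribed function of that class.
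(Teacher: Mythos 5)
Your proposal is correct for $q>2$ (the standing hypothesis of Section~\ref{admiss_bases}), and its skeleton coincides with the paper's: normalize via Lemma~\ref{permutesubscripts} so that $f_j=x_1y_2\cdots$ with $1,2\in T$, apply the transvection $g_2$ to exploit $g_2(x_1y_2)=x_1y_2+(w_1+w_2)+y_1x_2$, and restore the digits $k\neq j$ with the projectors of Lemma~\ref{project} after discarding carry terms of lower $\HH$-type. Where you genuinely diverge is in how the middle term $(w_1+w_2)$ is isolated. The paper does it by \emph{subtraction}: it uses Lemma~\ref{permutesubscripts} a second time to produce the auxiliary SBF $f'$ whose $j^{\mathrm{th}}$ digit is $y_1x_2\cdots$, and then takes $g_2f-f-f'$, so only the single group element $g_2=g_2(1)$ is ever needed. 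You do it by \emph{orthogonality}, averaging $\sum_{\alpha\in k^\times}\alpha^{-2^j}g_2(\alpha)$ in the style of the shift operators of Lemma~\ref{shifts}, so the $\alpha^0$ and $\alpha^{2^{j+1}}$ terms vanish by character sums. Each route buys something: yours avoids constructing $f'$ and kills the unwanted digit-$j$ terms in one stroke, while the paper's subtraction argument is precisely what survives at $q=2$ (cf.\ Lemma~\ref{permuteq2}), where your character sums degenerate. One point you should make explicit: the averaging alone does not isolate the desired term, because cross-terms of total $\alpha$-degree congruent to $2^j$ mod $q-1$ also survive the sum --- for instance the term taking the $\alpha^2$-part in digit $j-1$ and the $\alpha^0$-part everywhere else. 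Your argument does dispose of these, but only via the subsequent projectors and the discarding of carries (any $\alpha$-dependent part of a digit $k\neq j$ either alters the index-$1$ or index-$2$ exponent pattern there or produces a carry), so the projection step is doing more work than the phrase ``factors out of the sum over $\alpha$'' suggests; spelling this out would close the only soft spot in an otherwise sound proof.
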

\begin{proof} By Lemma~\ref{permutesubscripts}, we may assume that $\{1,2\}\subseteq T$ and
$f_j=x_1y_2\cdots$. Also by Lemma~\ref{permutesubscripts}, we can
apply a group ring element of $k{\rm Sp}(V)$ to $f$ to obtain a
function $f'$ which is identical to $f$ except that $f_j$ is
replaced by $f'_j=y_1x_2\cdots$, where $f_j$ and $f'_j$ differ only
in the first two subscripts.  Then we have
$$g_2f_j-f_j-f'_j=[(x_1+x_2)(y_1+y_2)-f_j-f'_j]\cdots=(x_1y_1+x_2y_2)\cdots.$$  We apply projectors to the other digits, as in the proof of Lemma~\ref{permutesubscripts}, to kill any functions which are not the same as the original ones in those digits.
\end{proof}

\begin{Lemma}\label{T-up}
Let $f_j$ be the $j^\mathrm{th}$ digit of an SBF $f$, and let the class of $f_j$ be $(\rho,\sigma,\tau,\upsilon)\ne (1,0,m-2,0),\ \rho>0$.  Then there exists a group ring element $g$ such that $gf$ is an SBF identical to $f$ modulo the span of SBFs of lower $\HH$-types, except that $f_j$ is replaced by a function of class
$(\rho-1,\sigma,\tau+2,\upsilon)$.
\end{Lemma}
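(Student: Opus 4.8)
Lemma~\ref{T-up} is the "downward" counterpart of Lemma~\ref{R-up}: it decreases $\rho$ by one and increases $\tau$ by two, thereby converting a "paired" block $(W_{r_i}+W_{r'_i})$ into two single-variable factors $Z_sZ_{s'}$. My plan is to mirror the structure of the proof of Lemma~\ref{R-up}, using Lemma~\ref{permutesubscripts} to normalize the variables and then exhibiting an explicit group-ring element that performs the required transition on the $j^{\mathrm{th}}$ digit modulo SBFs of lower $\HH$-type.

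First I would use Lemma~\ref{permutesubscripts} to reduce to a convenient configuration. Since $\rho>0$, the digit $f_j$ contains at least one factor $(W_1+W_3)$ with $1\in R$, $3\in R'$ (after permuting subscripts). The goal is to replace this factor by a product of two single variables, say $x_1\,y_3$ or similar, landing in class $(\rho-1,\sigma,\tau+2,\upsilon)$. The natural device is the transvection $g_2(1)=g_2$ of (\ref{symptransvection}) together with the projectors $p_{i,j}^{(\alpha)}$ of Lemma~\ref{project}. Looking at the computation (\ref{eq6}), we have $g_2(w_1+w_3)x_2\equiv(w_1+w_3)x_2+\alpha y_1$ (with $\alpha=1$), and at (\ref{eq7}) a similar identity producing single-variable terms; these show that applying $g_2$ to a digit containing $(W_1+W_3)$ and suitable single factors yields, among its terms, one of the desired lower class $(\rho-1,\sigma,\tau+2,\upsilon)$. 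I would isolate that term by composing with an appropriate projector $p_{i,j}^{(\alpha_i)}$ and subtracting off $f_j$ itself, exactly as in the last step of Lemma~\ref{R-up}.

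The key steps, in order: (i) invoke Lemma~\ref{permutesubscripts} to place the paired index in position $1,3$ and arrange the companion single index (which must exist since we need a ``home'' for the newly created single variables) in position $2\in T$ or $2\in U$; (ii) apply $g_2$ to $f$, recording that the substitution acts on every digit (\ref{linsubs}) but that carries between digits produce only lower $\HH$-type terms, which we discard; (iii) read off from the relevant line among (\ref{eq6})--(\ref{eq13}) the term whose $j^{\mathrm{th}}$ digit has class $(\rho-1,\sigma,\tau+2,\upsilon)$; (iv) apply projectors $p_{k}^{(\alpha_k)}$ for each $k\neq j$ to force the other digits to remain equal to $f_k$, and a projector in digit $j$ to kill the unwanted $\sigma$-changing or $(w_1+w_3)$-retaining terms; (v) subtract the residual copy of the original function to obtain exactly the target SBF modulo the span of SBFs of lower $\HH$-type.

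The main obstacle I anticipate is the excluded class $(1,0,m-2,0)$, which is precisely where the construction breaks down: with $\rho=1$, $\sigma=\upsilon=0$, and $\tau=m-2$, every index of $\{1,\ldots,m\}$ is used up, so there is no free single-variable slot of type $U$ to receive the fragment produced by the transvection, and the term we would isolate from (\ref{eq6}) or (\ref{eq7}) fails to have the required form inside the available indices. Handling this correctly means verifying that in \emph{all other} classes with $\rho>0$ there is enough room---either an index in $U$ or an extra index in $T$ beyond the one being converted---so that the single variables $x_1,y_3$ (or their analogues) land in legitimate $T$-positions of a genuine class-$(\rho-1,\sigma,\tau+2,\upsilon)$ function; once that bookkeeping is confirmed, the projector argument from Lemma~\ref{permutesubscripts} removes all extraneous terms and the proof closes.
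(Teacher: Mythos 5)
Your overall skeleton (normalize with Lemma~\ref{permutesubscripts}, hit the digit with $g_2$, clean up with projectors, subtract the residual copy of $f$) is the same as the paper's, but the computation you point to does not do the job, and this is a genuine gap. In (\ref{eq6}) the extra term $\alpha y_1$ arises from replacing $x_2^2$ by $1$, i.e.\ it is a carry term: its degree has dropped by two, so it lies in the span of SBFs of \emph{lower} $\HH$-type and is exactly the kind of term one discards; it is not a function of class $(\rho-1,\sigma,\tau+2,\upsilon)$ at the same $\HH$-type. Likewise the non-carry extra term in (\ref{eq7}) is $\alpha y_1(w_2+w_3)$, which is of the \emph{same} class as $(w_1+w_3)y_2$ (one pair, one single variable), so it produces no class change at all. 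More generally, a $T$-index cannot serve as the ``home'' for the fragment created by the transvection: $g_2(w_1+w_3)x_2$ yields only a carry, and $g_2(w_1+w_3)y_2$ yields only a same-class term plus carries. So your fallback ``either an index in $U$ or an extra index in $T$'' is wrong in its second alternative, and your argument fails outright for classes such as $(2,0,m-4,0)$, where $\upsilon=0$, $\sigma=0$, and there are plenty of $T$-indices.

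What actually makes the lemma work is a trichotomy you never isolate: since $2\rho+\sigma+\tau+\upsilon=m$ and the class is not $(1,0,m-2,0)$ with $\rho>0$, at least one of $\rho>1$, $\sigma>0$, $\upsilon>0$ holds, and each of these provides a different receiving structure for the cross-term of $g_2$. The relevant identities are the ones you did \emph{not} cite: the term $\alpha x_2y_1(w_3+w_4)$ in (\ref{eq3}) handles $\rho>1$ (so $(g_2-1)(w_1+w_3)(w_2+w_4)\equiv y_1x_2(w_3+w_4)$ modulo carries); the term $\alpha w_3x_2y_1$ in (\ref{eq4}), in the form $(g_2-1)\bigl(w_1(w_2+w_3)\bigr)\equiv y_1x_2w_3$, handles $\sigma>0$; and $(g_2-1)(w_1+w_3)=x_2y_1$ with $2\in U$ handles $\upsilon>0$. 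The excluded class $(1,0,m-2,0)$ is precisely the unique class with $\rho>0$ in which all three receiving structures are absent. Without this case division and the correct three computations, the proof does not close.
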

\begin{proof}
Since $(\rho,\sigma,\tau,\upsilon)\ne (1,0,m-2,0)$, and
$2\rho+\sigma+\tau+\upsilon=m$, we will consider three cases:
$\rho>1,\ \sigma>0$, and $\upsilon>0$.  We omit the part of $f_j$
which is not acted upon by $g$. Note that by
Lemma~\ref{permutesubscripts}, we may take $f_j$ to have some
special form, as long as it belongs to the class
$(\rho,\sigma,\tau,\upsilon)$.

If $\rho>1$, we take $f_j=(x_1y_1+x_3y_3)(x_2y_2+x_4y_4)$ and we get (keeping only the square-free terms):

\begin{eqnarray*}
(g_2-1)f_j&=&((x_1+x_2)y_1+x_3y_3)(x_2(y_1+y_2)+x_4y_4)-f_j\\
&\equiv&y_1x_2(x_3y_3+x_4y_4).
\end{eqnarray*}

If $\rho>0,\ \sigma>0$, then we take $f_j=x_1y_1(x_2y_2+x_3y_3)$ and compute
\begin{eqnarray*}
(g_2-1)f_j&=&(x_1+x_2)y_1(x_2(y_1+y_2)+x_3y_3)-f_j\\
&\equiv&(x_1y_1x_2y_2+(x_1+x_2)y_1x_3y_3)-f_j\\
&=&y_1x_2x_3y_3.
\end{eqnarray*}

If $\rho>0,\ \upsilon>0$, we take $f_j=(x_1y_1+x_3y_3)$ and similarly get
\begin{eqnarray*}
(g_2-1)f_j&=&((x_1+x_2)y_1+x_3y_3)-f_j\\
&=&y_1x_2.
\end{eqnarray*}

We again keep the functions for the other digits from changing as in the proof of
Lemma~\ref{permutesubscripts}.
\end{proof}

\begin{Lemma}\label{get-m}
Let $f$ be an SBF whose $j^\mathrm{th}$ digit is $f_j$ of class
$(0,1,m-2,1)$.  Then there is a group ring element $g$ such that $gf$ is identical to $f$, modulo the span of SBFs of lower $\HH$-types, except that $f_j$ is replaced by a function of class $(0,0,m,0)$.
\end{Lemma}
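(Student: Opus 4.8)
The plan is to prove Lemma~\ref{get-m} by the same strategy used in the preceding three lemmas: exhibit an explicit group ring element, built from the symplectic transvection $g_2$ and the projectors of Lemma~\ref{project}, which acts on the chosen digit $f_j$ and produces a digit of class $(0,0,m,0)$, while Lemma~\ref{permutesubscripts} lets us kill all the unwanted terms in the other digits. By Lemma~\ref{permutesubscripts} I may first replace $f_j$ by any convenient representative of its class $(0,1,m-2,1)$; so I would take $f_j=W_1\,Z_3Z_4\cdots$ with the single $S$-index equal to $1$ and the single $U$-index equal to $2$ (the unused index), writing only the part $x_1y_1$ acted on by $g_2$ and leaving the $\tau=m-2$ many $Z_i$ factors implicit. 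The target class $(0,0,m,0)$ has $\rho=\sigma=\upsilon=0$ and $\tau=m$, i.e. a pure product of $m$ distinct $Z_i$'s; concretely I want to reach a digit of the form $Z_1Z_2\,Z_3\cdots$, converting the single $W_1=x_1y_1$ together with the unused index $2$ into two linear factors.

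The key computation is already recorded in the proof of Lemma~\ref{submodule lemma}: by \eqref{eq6} and \eqref{eq9}, applying $g_2(\alpha)$ (with $\alpha=1$) to a digit containing $w_1 x_2$ produces, modulo carries, a term $\alpha y_1$, and more to the point the action on $x_1y_1$ in the presence of the unused index $2$ yields $g_2(w_1)=w_1+\alpha x_2y_1$ by the display just above \eqref{eq9}. So I would compute $g_2 f_j$, which adds a term $x_2 y_1\cdots$ of class $(0,0,m,0)$ to $f_j$ of class $(0,1,m-2,1)$; subtracting off the original via $(g_2-1)$ isolates the new term $x_2y_1\cdots$, exactly as in the three cases of Lemma~\ref{T-up}. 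Then, following the recipe at the end of the proof of Lemma~\ref{permutesubscripts}, I apply the projectors $p_{i,k}^{(\alpha_k)}$ for all $k\neq j$ to remove any factorizable function whose $k^{\mathrm{th}}$ digit has been altered by the substitution or by a carry, and I treat as zero every term arising from a carry between the $(j-1)^{\mathrm{th}}$, $j^{\mathrm{th}}$ and $(j+1)^{\mathrm{th}}$ digits, since all such terms have strictly lower $\HH$-type by the analysis of subsection~\ref{subs}.

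The main obstacle, as in the earlier lemmas, is purely bookkeeping rather than conceptual: I must make sure that after $(g_2-1)$ the surviving square-free $j^{\mathrm{th}}$ digit is genuinely of class $(0,0,m,0)$ and not of some intermediate class, and that the projectors can simultaneously restore every other digit to its original value $f_k$ without disturbing the $j^{\mathrm{th}}$ digit. This is handled exactly as before: since the only variables $g_2$ introduces into other digits are $x_1$ and $y_2$ (through the substitution $x_1\mapsto x_1+x_2$, $y_2\mapsto y_1+y_2$), for each $k\neq j$ there is a choice of $\alpha_k$ for which $p_{1,k}^{(\alpha_k)}$ fixes $f_k$ and kills any competitor digit differing from $f_k$ in $x_1,y_1$, so the composite $\bigl(\prod_{k\neq j}p_{1,k}^{(\alpha_k)}\bigr)(g_2-1)$ is the desired group ring element $g$. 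Hence $gf$ agrees with $f$ in all digits except that $f_j$ is replaced by a function of class $(0,0,m,0)$, modulo the span of SBFs of lower $\HH$-type, which is precisely the assertion.
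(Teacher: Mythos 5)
Your proof is correct and follows essentially the same route as the paper: choose the representative with $1\in S$ and $2\in U$, apply $(g_2-1)$ so that $w_1$ becomes $y_1x_2$ (turning the class $(0,1,m-2,1)$ digit into one of class $(0,0,m,0)$), and then restore the other digits with the projectors of Lemma~\ref{project} while discarding carry terms of lower $\HH$-type. The paper's own proof is just a one-line version of exactly this computation.
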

\begin{proof} By Lemma~\ref{permutesubscripts}, we may take $1\in S,\ 2\in U$, so that
$f_j=x_1y_1(x_2y_2)^0\cdots$.  Then $(g_2-1)f_j=y_1x_2\cdots$, and
we proceed as before.
\end{proof}

\begin{Lemma}\label{S-down}
Let $f$ be an SBF whose $j^\mathrm{th}$ digit is $f_j$ of class
$(\rho,\sigma,\tau,\upsilon)$, $\sigma>0,\ \upsilon>0$.  Then there is a group ring element $g$ such that $gf$ is identical to $f$, modulo the span of SBFs of lower $\HH$-types, except that $f_j$ is replaced by a function of class $(\rho+1,\sigma-1,\tau,\upsilon-1)$.
\end{Lemma}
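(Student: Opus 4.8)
The plan is to imitate the proofs of Lemmas~\ref{R-up} and~\ref{T-up}: first normalize $f_j$ by means of Lemma~\ref{permutesubscripts}, then apply to $f$ a single word in the generators $g_2$, $g'_2$ and the projectors of Lemma~\ref{project} which changes only the $j^{\mathrm{th}}$ digit and in the desired manner, all computations being understood modulo the span of SBFs of lower $\HH$-types. Because $\sigma>0$ and $\upsilon>0$, Lemma~\ref{permutesubscripts} allows me to assume that $1\in S$ and $2\in U$; thus $f_j=x_1y_1\,h$, where the remaining factor $h$ involves only the indices $3,\dots,m$ and the index $2$ does not occur at all.

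The key point is that the required operator already appears inside the proof of Lemma~\ref{permutesubscripts}, in the line treating the case $(1\in S,\,2\in U)$, where it is computed that
$$
p_{1,j}^{(3)}\,g'_2\,p_{1,j}^{(2)}\,g_2\,(x_1y_1)=x_1y_1+x_2y_2=w_1+w_2.
$$
The only difference is that there one subtracts the identity to isolate $x_2y_2$ (an interchange of subscripts), whereas here I retain the full value $w_1+w_2$. Since $g_2$ and $g'_2$ substitute only among $x_1,y_1,x_2,y_2$, and the projectors $p_{1,j}^{(\cdot)}$ only inspect the exponents of $x_1$ and $y_1$ in the $j^{\mathrm{th}}$ digit, the factor $h$ is carried along untouched, so on the $j^{\mathrm{th}}$ digit
$$
p_{1,j}^{(3)}\,g'_2\,p_{1,j}^{(2)}\,g_2\,(f_j)\equiv (w_1+w_2)\,h.
$$
This replaces the factor $w_1$ and the unused index $2$ by the single paired factor $(w_1+w_2)$, i.e.\ it moves $1$ and $2$ into $R$ and $R'$; hence $\rho$ increases by one while $\sigma$ and $\upsilon$ each decrease by one, giving class $(\rho+1,\sigma-1,\tau,\upsilon-1)$. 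The degree $2\rho+2\sigma+\tau$ of the digit, and therefore the $\HH$-type of $f$, is unchanged.

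Finally I must arrange that the other digits are left intact. This is identical to the situation in Lemma~\ref{permutesubscripts}: applying $g_2$ and $g'_2$ to the whole product $f=f_0f_1^2\cdots f_{t-1}^{2^{t-1}}$ carries out the substitution simultaneously in every digit, and any square factor thereby produced is carried forward to give a monomial of strictly lower $\HH$-type, which is discarded; for each $k\neq j$ one then appends a suitable projector $p_{1,k}^{(\alpha_k)}$ which fixes $f_k$ and annihilates every term whose $k^{\mathrm{th}}$ digit has been disturbed in the variables $x_1,y_1$, this being enough because each such substitution routes through index $1$. I would simply invoke that argument. The only genuine obstacle is the bookkeeping: one must check that these clean-up projectors, applied after the digit-$j$ word above, leave the computed $j^{\mathrm{th}}$ digit untouched once the lower-$\HH$-type carry terms have been thrown away. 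This is routine, since each projector reads only the exponents within a single digit, and it completes the proof.
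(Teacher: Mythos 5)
Your proposal is correct and is essentially the paper's own proof: the paper takes $g=1+h$ where $h$ is the element of Lemma~\ref{permutesubscripts} interchanging the indices $1\in S$ and $2\in U$, and since $h=p_{1,j}^{(3)}g'_2p_{1,j}^{(2)}g_2-1$ (up to the clean-up projectors on the other digits), your operator $p_{1,j}^{(3)}g'_2p_{1,j}^{(2)}g_2$ is literally the same group ring element. The only difference is that you unwind the definition of $h$ and recompute $w_1\mapsto w_1+w_2$ explicitly, which is fine.
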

\begin{proof} By Lemma~\ref{permutesubscripts}, we may assume $1\in S$ and $2\in U$.  Let $h$ be the element from
Lemma~\ref{permutesubscripts} which interchanges $1$ and $2$ in
$f_j$, while keeping the rest of $f$ the same.  Then $g=1+h$ is the
desired element. \end{proof}

The next lemma, whose precise statement is rather complicated,
relates SBFs of a given $\HH$-type with SBFs whose $\HH$-types
are one step down in the partial order on $\HH$. A fuller
discussion of its meaning will be given after the proof.

\begin{Lemma}\label{HH-down}
Let  $(s_0,s_1,\ldots,s_j,\ldots,s_{t-1})$ and
$(s_0,s_1,\ldots,s_j-1,\ldots,s_{t-1})$ be a pair of $\HH$-types,
with $s_k\le 2m-r,\ 0\le k\le t-1$, and let $\lambda_j=2s_{j+1}-s_j$
for $0\le j\le t-1$. Suppose $f$ is an SBF of type
$(s_0,s_1,\ldots,s_j,\ldots,s_{t-1})$. Assume that the
$j^{\mathrm{th}}$ digit $f_j$ of $f$ is of class
$(0,\sigma_j,\tau_j,\upsilon_j)$, where
$$
\begin{aligned}
\sigma_j&=\min\{2m-r-s_j + \delta(r\le m)(m-r),\lfloor{\lambda_j}/2\rfloor\},\\
\tau_j&=\lambda_j-2\sigma_j, \text{and}\\
\upsilon_j&=m-\sigma_j-\tau_j.
\end{aligned}
$$
Case 1. $\sigma_j\neq\lambda_j/2$ (so that $\tau_j>0$).
\begin{enumerate}[{\rm(}a{\rm)}]
\item If  $\sigma_{j-1}<\lfloor\frac{\lambda_{j-1}}2\rfloor$, let $f_{j-1}$ be of class
$(1,\sigma_{j-1},\tau_{j-1},\upsilon_{j-1})$, where
\begin{eqnarray}\label{sigmatauupsilon}
\sigma_{j-1}&=&2m-r-s_{j-1} + \delta(r\leq m)(m-r),\notag\\
\tau_{j-1}&=&\lambda_{j-1}-2\sigma_{j-1}-2,\notag\\
\upsilon_{j-1}&=&m-\sigma_{j-1}-\tau_{j-1}-2.
\end{eqnarray}

\noindent Then the following is true modulo the span of SBFs of
lower $\HH$-type. There is a group ring element $g$ such that the
$j^{\mathrm{th}}$ digit of $gf$ is of class
$(0,\sigma_j+1,\tau_j-1,\upsilon_j)$ and the $(j-1)^{\mathrm{th}}$
digit of $gf$ is of class
$(0,\sigma_{j-1},\tau_{j-1},\upsilon_{j-1}+2)$, while all the other
digits remain the same.

\item If  $\sigma_{j-1}=\lfloor\frac{\lambda_{j-1}}2\rfloor$, let
$f_{j-1}$ be of class $(0,\lambda_{j-1}/2,0,m-\lambda_{j-1}/2)$ or
$(0,(\lambda_{j-1}-1)/2,1,m-(\lambda_{j-1}+1)/2)$, depending on
whether $\lambda_{j-1}$ is  even or odd. Then the following is true
modulo the span of SBFs of lower $\HH$-type. There is a group ring
element $g$ such that $gf$ has $j^{\mathrm{th}}$ digit as in (a) and
$(j-1)^{\mathrm{th}}$ digit of class
$(0,\sigma_{j-1}-1,\tau_{j-1},\upsilon_{j-1}+1)$, while all the
other digits remain the same.

\end{enumerate}

\noindent Case 2. $\sigma_j=\lambda_j/2$.

\begin{enumerate}[{\rm(}a{\rm)}]
\item If  $\sigma_{j-1}<\lfloor\frac{\lambda_{j-1}}2\rfloor$, let $f_{j-1}$ be of class
$(1,\sigma_{j-1},\tau_{j-1},\upsilon_{j-1})$, where
$\sigma_{j-1},\tau_{j-1},\upsilon_{j-1}$ are defined as in
(\ref{sigmatauupsilon}). Then the following is true modulo the span
of SBFs of lower $\HH$-type. There is a group ring element $g$ such
that the $j^{\mathrm{th}}$ digit of $gf$ is of class
$(0,\frac{\lambda_j}{2},1,m-\frac{\lambda_j}{2}-1)$ and the
$(j-1)^{\mathrm{th}}$ digit of $gf$ is of class
$(0,\sigma_{j-1},\tau_{j-1},\upsilon_{j-1}+2)$, while all the other
digits remain the same.

\item If  $\sigma_{j-1}=\lfloor\frac{\lambda_{j-1}}2\rfloor$, let
$f_{j-1}$ be of class $(0,\lambda_{j-1}/2,0,m-\lambda_{j-1}/2)$ or
$(0,(\lambda_{j-1}-1)/2,1,m-(\lambda_{j-1}+1)/2)$, depending on
whether $\lambda_{j-1}$ is  even or odd. Then the following is true
modulo the span of SBFs of lower $\HH$-type. There is a group ring
element $g$ such that $gf$ has $j^{\mathrm{th}}$ digit as in (a) and
$(j-1)^{\mathrm{th}}$ digit of class
$(0,\sigma_{j-1}-1,\tau_{j-1},\upsilon_{j-1}+1)$, while all the
other digits remain the same.
\end{enumerate}

\noindent In both Case 1 and Case 2, the elements $g$ can be chosen
so that if $f'$ is a monomial of $\HH$-type lower than that of $f$,
then $gf'$ is either zero or has $\HH$-type lower than that of $gf$.

\end{Lemma}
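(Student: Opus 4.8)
The plan is to realize the one-step drop in $\HH$-type as a single \emph{carry} from the $(j-1)^{\mathrm{th}}$ digit to the $j^{\mathrm{th}}$ digit. Indeed, lowering $s_j$ by one lowers $\lambda_{j-1}=2s_j-s_{j-1}$ by two and raises $\lambda_j=2s_{j+1}-s_j$ by one, which is exactly the change in digit degrees produced by carrying a single variable, as recorded in subsection~\ref{subs}. Accordingly I would assemble $g$ from three kinds of operators: first the rearrangement operators of Lemma~\ref{permutesubscripts}, used to move the factor of $f_{j-1}$ that is to be broken to the index-$1$ position (and, in the cases with $\rho=1$, to place the mate of the paired factor at index $3$) and to put the slot of $f_j$ that will absorb the carried variable also at index $1$; second a single symplectic transvection $g_1(\alpha)$ of (\ref{transvection}); and third the projectors $p_{1,k}^{(\cdot)}$ of Lemma~\ref{project}. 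Since all of these are $\HH$-type-non-increasing (group elements never raise the $\HH$-type, by subsection~\ref{subs}, and the projectors are built from the shift operators of Lemma~\ref{shifts}), the composite $g$ will automatically have the non-increasing property demanded by the closing assertion.

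The heart of the computation is the effect of $g_1(\alpha)\colon x_1\mapsto x_1+\alpha y_1$ on $f_{j-1}$. Because $g_1(\alpha)w_1=w_1+\alpha y_1^2$ as in (\ref{sigmadec}), raising the substituted digit to the $2^{j-1}$ power splits it, in characteristic $2$, into a no-carry part of the same $\HH$-type as $f$ and a carry part in which $y_1^{2^j}$ appears and sends a factor $y_1$ into the $j^{\mathrm{th}}$ digit, leaving index $1$ (and, when the $R$--$R'$ pair $(w_1+w_3)$ is broken, also index $3$) unused in $f_{j-1}$. The carry part is precisely the term of the desired lower $\HH$-type. To isolate it I would apply a projector $p_{1,j}^{(\cdot)}$ in the $j^{\mathrm{th}}$ digit, chosen case by case to fix the carry term and annihilate the no-carry terms, which differ exactly in the index-$1$ data of the $j^{\mathrm{th}}$ digit. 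Any further carry out of the $j^{\mathrm{th}}$ digit—such as the residual $\alpha y_1^2$ generated there—also lowers $s_{j+1}$ and may be discarded modulo SBFs of lower $\HH$-type.

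The four cases differ only in which factor of $f_{j-1}$ is broken and how the carried variable is absorbed in $f_j$, and each reduces to a short bookkeeping check on the class $(\rho,\sigma,\tau,\upsilon)$. In cases (a) the broken factor is the pair $(w_1+w_3)$ with $\rho=1$, so $\rho$ drops to $0$ and $\upsilon_{j-1}$ rises by $2$; in cases (b) no pair is present, so instead an $S$-factor $w_1$ is broken, whence $\sigma_{j-1}$ drops by $1$ and $\upsilon_{j-1}$ rises by $1$. In Case~1 the digit $f_j$ has $\tau_j>0$, so I would place a $T$-variable $x_1$ at index $1$; the carried $y_1$ forms $w_1=x_1y_1$, raising $\sigma_j$ by $1$ and lowering $\tau_j$ by $1$, and the isolating projector is $p_{1,j}^{(3)}$, which keeps the term in which both $x_1$ and $y_1$ occur. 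In Case~2 we have $\tau_j=0$, so index $1$ is unused in $f_j$ and the carried $y_1$ becomes a new $T$-variable, raising $\tau_j$ from $0$ to $1$ and lowering $\upsilon_j$ by $1$; here the isolating projector is $p_{1,j}^{(2)}$, which keeps the term carrying $y_1$ but not $x_1$. In each case the resulting classes of $f_{j-1}$ and $f_j$ are exactly those asserted.

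Finally, I would restore the remaining digits: since $g_1(\alpha)$ substitutes index $1$ in every digit, for each $k\neq j-1,j$ I would append a projector $p_{1,k}^{(\alpha_k)}$, with the superscript determined by whether index $1$ of $f_k$ carries $x_1$, $y_1$, $w_1$, or neither, so as to delete every term whose $k^{\mathrm{th}}$ digit was altered, exactly as in the final paragraph of the proof of Lemma~\ref{permutesubscripts}. The closing statement about a monomial $f'$ of strictly lower $\HH$-type then follows by tracking types through $g$: since $g$ cannot raise the $\HH$-type and reaches the target $(s_0,\ldots,s_j-1,\ldots,s_{t-1})$ only via the carry it performs in the $j^{\mathrm{th}}$ digit, any $f'$ already below $f$ is either killed by the projectors or driven strictly below that target. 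The principal obstacle is not any isolated step but the simultaneous bookkeeping: one must verify in all four cases that the chosen projector fixes the carry term, annihilates the equal-$\HH$-type no-carry terms, and leaves every other digit unchanged, and that the strictness needed in the final $f'$ statement genuinely holds.
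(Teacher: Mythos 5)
Your core idea is the same as the paper's: realize the one-step drop in $\HH$-type as a single carry from the $(j-1)^{\mathrm{th}}$ to the $j^{\mathrm{th}}$ digit, after using Lemma~\ref{permutesubscripts} to place the factor to be broken and the receiving slot at index $1$; your class bookkeeping in all four cases agrees with the statement. The difference lies in the operator that performs the carry. The paper applies the averaged shift operator $g(j-1)=\sum_{\mu\in k^{\times}}\mu^{2^{j-1}}g_1(\mu^{-1})$ of Lemma~\ref{shifts}, which on a basis monomial either annihilates it (when the $(j-1)^{\mathrm{th}}$ bit of the exponent of $x_1$ is $0$) or replaces that $x_1$ by $y_1$ while leaving every other digit untouched; since $f_{j-1}$ contains $w_1$, the shift forces the carry, the $x_3y_3$-branch of $(w_1+w_3)$ is killed outright, and no isolating projector on digit $j$ nor cleanup on the other digits is needed. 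Your single transvection $g_1(\alpha)$ followed by projectors can be made to produce the correct $gf$ (the no-carry branches do differ from the carry branch in the index-$1$ data of digit $j$, so $p_{1,j}^{(3)}$ in Case 1 and $p_{1,j}^{(2)}$ in Case 2 isolate it), but at the cost of the extra bookkeeping you acknowledge.

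The genuine gap is in the closing statement. A single transvection has a nonzero ``identity branch'': for a monomial $f'$ of $\HH$-type $\sss'$ strictly below that of $f$, the expansion of $g_1(\alpha)f'$ contains $f'$ itself, of unchanged type $\sss'$. If $\sss'$ equals the target type $(s_0,\ldots,s_j-1,\ldots,s_{t-1})$ (or is simply not below it), and $f'$ happens to have digit-$j$ bits of $(a_1,b_1)$ equal to $(1,1)$ or $(0,0)$ and to match $f_k$ at index $1$ in the remaining digits, then this copy of $f'$ survives all of your projectors, so $gf'$ has a nonzero component whose type is not lower than that of $gf$. Your assertion that $g$ ``reaches the target only via the carry it performs'' is therefore not true of the $g$ you build. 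The paper closes this hole by (i) pre-composing with $p_{1,j-1}^{(3)}$, so that any surviving monomial has digit-$(j-1)$ bits $(0,0)$ or $(1,1)$, and (ii) using $g(j-1)$, which then either kills the monomial or forces a carry that strictly lowers the type; strictness below the target then follows because the entry-sum of the resulting type is at least $2$ less than $\sum_{k}s_k$, whereas the target's entry-sum is only $1$ less. Replacing your $g_1(\alpha)$ by $g(j-1)p_{1,j-1}^{(3)}$ repairs the argument (and, as a bonus, removes the need for the digit-$j$ isolating projector and the cleanup projectors on the other digits).
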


\begin{proof} For $\HH$-type $(s_0,\ldots ,s_{j-1},s_j-1,\ldots,s_{t-1})$, we let
$(\lambda'_0,\lambda'_1,\ldots ,\lambda'_{t-1})$ be the
corresponding type in $\Llambda$. Since $(s_0,\ldots
,s_{j-1},s_j-1,\ldots,s_{t-1})$ is the $\HH$-type that results when
there is a carry from the $(j-1)^{\rm th}$ digit to the $j^{\rm th}$
digit of $f$, we have $\lambda'_{j-1}=\lambda_{j-1}-2\geq 0$,
$\lambda'_j=\lambda_j+1$, and $\lambda'_k=\lambda_k$ for all
$k\not\in\{j-1, j\}$.

We deal with Case 1 first. We may assume that $1\in R$ for $f_{j-1}$
in Case 1(a), or that $1\in S$ for $f_{j-1}$ in Case 1(b). In either
case, we may assume that $1\in T$ for $f_j=x_1{y_1}^0\ldots$, using
Lemma~\ref{permutesubscripts}, at the cost of adding some terms in
the span of SBFs of lower $\HH$-type than $f$.  The shift operator
$g(j-1)$ shifts $2^{j-1}$ from $a_1$, the exponent of $x_1$, to
$b_1$, the exponent of $y_1$, or returns $0$ if a monomial has a $0$
in the $(j-1)^\mathrm{th}$ digit of $a_1$.  It is clear that
$g(j-1)f$ has the desired properties.

For Case 2, we may make the same assumptions on $f_{j-1}$ as in Case
1. Note that $\sigma_j=\frac{\lambda_j}{2}$ and
$m=\sigma_j+\upsilon_j$, we have $\upsilon_j>0$ (otherwise
$\lambda_j=2m$, which implies $\lambda'_j=2m+1$, impossible).
Therefore for $f_j$, we can assume that $1\in U$. Let $g(j-1)$ be
the same shift operator as above. We see that $g(j-1)f$ has the
desired properties.

Suppose $f'$ is a monomial of $\HH$-type $\sss'$ strictly below
$\sss=(s_0,\ldots,s_{t-1})$. Then $p_{1,j-1}^{(3)}f'$ is equal to
$0$, if the $(j-1)^{\mathrm{th}}$ digits of the exponents of $x_1$
and $y_1$ are different, or to $f'$ if these exponents are the same.
In the latter case either $g(j-1)f'$ is  zero or else its $\HH$-type
$\sss''$ is strictly less than $\sss'$ due to carry in its
computation. Since the ordering on $\HH$ reflects the
$k\Sp(V)$-module structure (\S\ref{substruct}), we also know
$\sss''\leq (s_0,\ldots,s_j-1,\ldots,s_{t-1})$. We see that this
inequality must actually be strict, because the sum of the entries
of $\sss''$ is at least $2$ less than $\sum_{j=0}^{t-1}s_j$. Thus,
since $p_{1,j-1}^{(3)}f=f$, the group ring element
$g(j-1)p_{1,j-1}^{(3)}$ has the required properties in the last
statement of the lemma.
\end{proof} \vspace{0.1in}

We would like to explain the meaning of this rather technical lemma.
We first focus on the $(j-1)^{\rm th}$ digit of $f$ and $gf$. Note
that the maximum level for the $(j-1)^{\rm th}$ digit of an
$r$-admissible function of $\HH$-type $(s_0,\ldots
,s_{j-1},s_j,\ldots,s_{t-1})$ is $L_{j-1}:={\rm min}\{\ell_{j-1},
\lfloor\frac{\lambda_{j-1}}{2}\rfloor\},$ where
$\ell_{j-1}=2m-r-s_{j-1}+\delta(r\leq m)(m-r)$. In the lemma, we
have set $\sigma_{j-1}$ equal $L_{j-1}$. So the level $\sigma_{j-1}$
of $f_{j-1}$ has its maximum value for the given $\HH$-type. The
maximum level for the $(j-1)^{\rm th}$ digit of an $r$-admissible
function of $\HH$-type $(s_0,\ldots ,s_{j-1},s_j-1,\ldots,s_{t-1})$
is $L'_{j-1}:={\rm min}\{\ell_{j-1},
\lfloor\frac{\lambda'_{j-1}}{2}\rfloor\}$. Since
$\lambda'_{j-1}=\lambda_{j-1}-2$, we have
$$L'_{j-1}= \left\{\begin{array}{ll}
    \sigma_{j-1},\; & {\rm if}\; \sigma_{j-1}<\lfloor\frac{\lambda_{j-1}}{2}\rfloor, \\
    \sigma_{j-1}-1,\; & {\rm if}\;
    \sigma_{j-1}=\lfloor\frac{\lambda_{j-1}}{2}\rfloor.
            \end{array}\right. $$
So the level of the $(j-1)^\mathrm{th}$ digit of $gf$ also has its
maximum value for that given $\HH$-type.

In the lemma, we also specified $\sigma_j$ to have its maximum
value. The maximum level for the $j^{\rm th}$ digit of an
$r$-admissible function of $\HH$-type $(s_0,\ldots
,s_{j-1},s_j-1,\ldots,s_{t-1})$ is $$L'_{j}:={\rm
min}\{2m-r-s_{j}+1+\delta(r\leq m)(m-r),
\lfloor\frac{\lambda'_{j}}{2}\rfloor\}.$$ In Case 1 of the lemma,
since $2\sigma_j<\lambda_j$ and $\lambda'_j=\lambda_j+1$, we have
$L'_j=\sigma_j+1$. So the level $\sigma_j+1$ of the $j^\mathrm{th}$
digit of $gf$ in Case 1 of the lemma is maximum for that given
$\HH$-type. In Case 2 of the lemma, since $\lambda_j=2\sigma_j$ and
$\lambda'_j=\lambda_j+1$, we have $L'_j=\sigma_j$. So the level
$\sigma_j$ of the $j^\mathrm{th}$ digit of $gf$ in Case 2 of the
lemma is also maximum for that given $\HH$-type.

Thus, assuming we picked $\sigma_k$ also to be maximum for each of
the other digits, $0\le k\le t-1$, $gf$ will be an SBF of $\HH$-type
$(s_0,\ldots,s_j-1,\ldots,s_{t-1})$ which has the maximum
$r$-admissible level for each digit, for that $\HH$-type.

\begin{Lemma}
\label{generation2} %Main generation lemma.
 The set of $r$-admissible SBFs forms a basis for $\CC_r$.
\end{Lemma}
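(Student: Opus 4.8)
The plan is to reduce the statement to a single inclusion and then establish it by a triangular generation argument. Since the $r$-admissible SBFs are a subset of the SBF basis of $k[\PP]$, they are automatically linearly independent and hence form a basis of their span $\MM_r$; so it suffices to prove $\MM_r=\CC_r$. By Lemma~\ref{Cinside} we have $\CC_r\subseteq\MM_r$, and by Lemma~\ref{submodule lemma} $\MM_r$ is a $k\Sp(V)$-submodule. The real content is therefore the reverse inclusion $\MM_r\subseteq\CC_r$, i.e. that \emph{every} $r$-admissible SBF lies in $\CC_r$. I would prove this by producing, for each $r$-admissible SBF $f'$ of $\HH$-type $\sss$, an element $h_{f'}\in\CC_r$ of the form $h_{f'}=f'+\sum_{\sss(f'')<\sss}c_{f''}f''$, so that $f'$ is the unique component of $h_{f'}$ of maximal $\HH$-type. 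Grouping the basis by $\HH$-type, the transition matrix from $\{f'\}$ to $\{h_{f'}\}$ is then block unitriangular with identity diagonal blocks, hence invertible, so the $h_{f'}$ span $\MM_r$; as they lie in $\CC_r$, this gives $\MM_r\subseteq\CC_r$.

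The elements $h_{f'}$ are built from the characteristic function $\chi$ of a chosen $X\in\II_r$, which lies in $\CC_r$ by definition. Expanding $\chi$ from (\ref{charfunctioneq}), its unique monomial of highest $\HH$-type is $\prod_{i=1}^m x_i^{q-1}\prod_{i=1}^{m-r}y_i^{q-1}$ when $r<m$ (resp. $\prod_{i=1}^{2m-r}x_i^{q-1}$ when $r\ge m$); each digit of this monomial has class $(0,m-r,r,0)$ (resp. $(0,0,2m-r,r-m)$), which is exactly the maximal $r$-admissible level at the top $\HH$-type $(2m-r,\ldots,2m-r)$. This gives a starting element of $\CC_r$ whose leading term is a maximal-level SBF of the top $\HH$-type. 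From here I would argue by downward induction on $\HH$-type, maintaining at each step an element of $\CC_r$ whose leading term is a maximal-level SBF of the current $\HH$-type: Lemma~\ref{HH-down} carries such an element to one for an $\HH$-type a single step lower (again at maximal level), while within a fixed $\HH$-type Lemma~\ref{permutesubscripts} realizes any basis SBF of a prescribed class, Lemmas~\ref{R-up} and~\ref{T-up} move between classes at fixed level, and Lemmas~\ref{S-down} and~\ref{get-m} lower the level. Sweeping through all classes and all admissible levels at each $\HH$-type reaches every $r$-admissible SBF.

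The bookkeeping that makes this succeed is the triangularity built into the cited lemmas. Each of Lemmas~\ref{permutesubscripts}--\ref{HH-down} outputs its prescribed target SBF modulo SBFs of \emph{strictly lower} $\HH$-type, and applying an operator from $k\Sp(V)$ never raises the $\HH$-type of any term; the last sentence of Lemma~\ref{HH-down} is precisely what guarantees that the error terms of strictly lower $\HH$-type are sent to zero or to something lower than the new leading term even across an $\HH$-type-lowering step. Consequently, applying the appropriate operator to $h_{f_1}=f_1+(\text{lower})$ yields $c\,f_2+(\text{lower})$ with $c\ne0$, so the required shape $h_{f_2}=f_2+(\text{lower})$ is preserved throughout the chain. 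Because $\CC_r\subseteq\MM_r$, every intermediate element automatically expands in $r$-admissible SBFs, so no terms outside $\MM_r$ ever intrude.

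The main obstacle I anticipate is not any single computation but the global orchestration of the intra-$\HH$-type moves: verifying that Lemmas~\ref{permutesubscripts}--\ref{S-down} together genuinely reach \emph{all} classes $(\rho,\sigma,\tau,\upsilon)$ of every admissible level, subject to $2\rho+2\sigma+\tau=\lambda_j$, $2\rho+\sigma+\tau+\upsilon=m$, and $\sigma\le\ell_j$, and correctly handling the boundary and exceptional cases --- notably the class $(1,0,m-2,0)$ that Lemma~\ref{T-up} excludes and the special transition supplied by Lemma~\ref{get-m} --- as well as keeping the $r<m$ and $r\ge m$ cases in step. Maintaining control of the leading term along chains of operators of different types, so that the downward induction on $\HH$-type never loses the highest term, is the delicate part.
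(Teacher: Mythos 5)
Your proposal is correct and follows essentially the same route as the paper: reduce to showing every $r$-admissible SBF lies in $\CC_r$, seed with the leading monomial $m_+$ of the characteristic function of an element of $\II_r$, descend through $\HH$-types via Lemma~\ref{HH-down} while keeping maximal admissible levels, sweep the classes within a fixed $\HH$-type via Lemmas~\ref{permutesubscripts}--\ref{S-down}, and control the error terms by triangularity in the $\HH$-order (the paper phrases this as induction on the poset $\HH$ rather than as a block-unitriangular transition matrix, but the content is identical, and it likewise uses Lemmas~\ref{Cinside} and~\ref{submodule lemma} to ensure the lower-order terms are themselves $r$-admissible). The one obstacle you flag --- digits of class $(0,0,m,0)$ whose maximal admissible level is $0$, which forces $r=m$ --- is resolved in the paper exactly as your setup suggests: such ``exceptional'' digits already coincide in class with the corresponding digits of $m_+$, so one modifies only the remaining digits of $m_+$ and never needs Lemma~\ref{get-m} or Lemma~\ref{T-up} there.
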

\begin{proof}
The set of SBFs is linearly independent by construction.
By Lemma~\ref{Cinside}, it is sufficient to prove that
every $r$-admissible SBF lies in $\CC_r$.
We shall proceed by induction on the partial order on $\HH$.
To start the induction, we observe that
$\CC_r\supseteq\CC_{2m-1}$, by  (\ref{Cnested}), and that
$\CC_{2m-1}$ contains the constant functions
and all $r$-admissible SBFs of the lowest
$\HH$-type  $(1,1,\dots,1)$, since it contains all basis monomials of this type.
Next we wish to explain how Lemmas~\ref{permutesubscripts}--\ref{HH-down}
will be applied in our inductive proof. They are all of the same form:
from a given function $f$ and a target function $f'$ of a certain $\HH$-type,
each lemma yields a group ring element $g$ such that $f'-gf$ lies in the span
of SBFs whose $\HH$-types are lower than that of $f'$. If the functions
$f$ and $f'$ belong to $\MM_r$, then by Lemma~\ref{submodule lemma}
the conclusions of Lemmas~\ref{permutesubscripts}--\ref{HH-down} can be strengthened to  say that the error term $f'-gf$
is in the span of $r$-admissible  SBFs of lower $\HH$-type than $f'$.
With this in mind, we can now continue with the proof.

Let $f'$ be an $r$-admissible SBF. We will show that $f'\in\CC_r$.
We can assume that $f'$ is not constant and has $\HH$-type strictly
higher than $(1,1,\dots, 1)$. The inductive hypothesis states
that every $r$-admissible $SBF$ of strictly lower $\HH$-type
belongs to $\CC_r$.

Suppose  $\CC_r$ contains any SBF whose digits belong to the same classes
as the corresponding digits of $f'$. By Lemma~\ref{Cinside}
such an SBF belongs to $\MM_r$. Then
Lemma~\ref{permutesubscripts} shows, together with the inductive
hypothesis, that $f'\in\CC_r$.
So it suffices to show that $\CC_r$ contains some
SBF with the same digit classes as $f'$.

Next, let $f$ be an SBF of the same $\HH$-type as $f'$, but having
maximum $r$-admissible levels in all digits. We will show that the
$f'$ lies in the $k\Sp(V)$-submodule generated by  $f$ and $\CC_r$.
This will go smoothly unless very special conditions hold, in which
case more work is required to obtain the result. By the inductive
hypothesis, it is enough to show  that the above submodule contains
$f'$ modulo the span of $r$-admissible SBFs of lower $\HH$-type. We
do this by applying the group ring operators given by
Lemmas~\ref{R-up}--\ref{S-down}. These operators allow us to modify
the digits of $f$ one at a time while leaving the other digits
unchanged until we obtain, modulo the span of $r$-admissible of
lower $\HH$-type, an SBF with the same digit classes as $f'$. Then
we apply the preceding paragraph.

Of course, we only need to modify a digit of $f$ if it belongs to a
different class from the corresponding digit of $f'$. If the
$j^{\mathrm{th}}$ digit of $f'$ is not of class $(0,0,m,0)$, then
using Lemmas~\ref{R-up}, \ref{T-up} and \ref{S-down} we can modify
the $j^{\mathrm{th}}$ digit of $f$ successfully. If the $j^{\mathrm{th}}$
digit of $f'$ is $(0,0,m,0)$ but the maximum $r$-admissible
level for the $j^{\mathrm{th}}$ digit in an SBF of this $\HH$-type is
nonzero, then the same lemmas, together with Lemma~\ref{get-m} allow
us to modify the $j^{\mathrm{th}}$ digit $f$ to the desired form.
Finally, suppose the maximum $r$-admissible level $\ell_j$ for the
$j^{\mathrm{th}}$ digit is equal to $0$. Let $\HH$-type of $f'$ and
$f$ be $(s_0,\ldots ,s_{t-1})$. Then the conditions (\ref{r-admit})
$$
s_j\le 2m-r, \quad \ell_j=(m-r)\delta(r\leq m) + (2m-r-s_j)
$$
force $r\ge m$ and $s_j=2m-r$. Then
\begin{equation}\label{meq}
m=\lambda_j=2s_{j+1}-s_j\leq 2(2m-r)-(2m-r)=2m-r\leq m;
\end{equation}
so equality holds throughout, and $s_j=s_{j+1}=2m-r=m$, which means
$r=m$. If $r=m$ and some digits of $f'$ have class $(0,0,m,0)$ and
$0$ is the maximum $r$-admissible level for those digits, we shall
refer to $f'$ as \emph{exceptional}. Thus, unless $f'$ is
exceptional, the desired conclusion holds.

Suppose the $\HH$-type $\sss'$ of $f'$ is not
$(2m-r,2m-r,\ldots,2m-r)$, the highest possible $\HH$-type for
$r$-admissible SBFs.  Then Lemma~\ref{HH-down} says that for any
$\HH$-type $\sss''$ which is one step higher than that of $f'$ in
the the partial order, and any $r$-admissible SBF $f''$ of type
$\sss''$ which has maximum $r$-admissible levels in all digits,
there exists a group ring element $g''$, such  $g''f''$ is an SBF of
type $\sss'$ and maximum $r$-admissible levels in all digits, modulo
the span of $r$-admissible SBFs of $\HH$-types lower than $\sss'$.
Moreover, $g''$  sends monomials of $\HH$-type lower than $\sss''$
to monomials of $\HH$-type lower than $\sss'$.

Given two $\HH$-types, ${\bf s}=(s_0,\ldots,s_{t-1})$ and ${\bf
s}'=(s'_0,\ldots,s'_{t-1}),\ 2m-r\ge s_j\ge s'_j,\
0\le j\le t-1,$ it is easy to see that there exists a sequence of
$\HH$-types connecting $\sss$ and $\sss'$, such that two
consecutive $\HH$-types differ in only one component, with that
component of the second being reduced by $1$ (see
\cite{bardoe-sin}).

Consider the characteristic function (\ref{charfunctioneq}), which belongs
to $\CC_r$ by definition. It is the sum of its
leading monomial
$$
m_+=\begin{cases}x_1^{q-1}\cdots x_m^{q-1}y_1^{q-1}\cdots y_{m-r}^{q-1},\quad
\text{if $r<m$,}\\
x_1^{q-1}\cdots x_{2m-r}^{q-1},\quad\text{if $r\geq m$}\end{cases}
$$
and some other $r$-admissible SBFs of lower $\HH$-type. Now $m_+$ is
an SBF of type $(2m-r,\ldots,2m-r)$ and has maximum $r$-admissible
levels in all of its digits. Thus, by repeated application of
Lemma~\ref{HH-down} down a sequence from $(2m-r,\ldots,2m-r)$ to
$\sss'$, we see that $\CC_r$ contains an $r$-admissible SBF $f$ of
type $\sss'$, modulo the span of $r$-admissible SBFs of $\HH$-types
lower than $\sss'$. If $f'$ is not exceptional, then we have seen
that $\CC_r$ also contains $f'$, modulo the span of $r$-admissible
SBFs of lower $\HH$-type.

If $f'$ is exceptional, then $r=m$ and each exceptional digit
of $f'$ belongs to the same class $(0,0,m,0)$ as the same digit of $m_+$.
We can use Lemmas~\ref{R-up}, \ref{T-up}, \ref{S-down} and
\ref{HH-down} to modify the other digits of $m_+$, without
doing anything to the exceptional digits, until we obtain
$f'$ modulo the span of $r$-admissible SBFs of lower $\HH$-type.

In both cases, we apply the inductive hypothesis to deduce
$f'\in\CC_r$, which completes the proof.
\end{proof}

%%%%%%%%%%%%%%%%%%%%%%%%%%%%%%%%%%%%%%%%%%%%%%%%%%%%%%%%%%%%%%%%%%%%%%%

\section{The case $q=2$}\label{q=2}

In this section we modify the arguments of Section~\ref{admiss_bases} to show that
Theorem~\ref{basis_thm} also holds when $q=2$. We will use the
$\Sp(V)$ elements  $g_1=g_1(1)$ and  $g_2=g_2(1)$ defined
in (\ref{transvection}) and (\ref{symptransvection}).
\begin{Lemma}
Lemma~\ref{submodule lemma} holds when $q=2$.
\end{Lemma}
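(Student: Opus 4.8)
When $q=2$ we have $t=1$, so every SBF reduces to a single square-free digit $f_0$ and we work in the function ring $k[V]$, where the field relations $x_i^2=x_i,\ y_i^2=y_i$ replace the truncation. Thus $\MM_r$ is the span of the constants together with the spaces $\overline P^{\lambda}_{\ell(\lambda)}$, $1\le\lambda\le 2m-r$, where $\ell(\lambda)=(m-r)\delta(r\le m)+(2m-r-\lambda)$. The plan is to show $\MM_r$ is stable under a generating set of $\Sp(V)$. Since $k^\times=\{1\}$, the diagonal element (\ref{gen1}) is trivial and the transvection families collapse to the single elements $g_1=g_1(1)$ and $g_2=g_2(1)$; the surviving members of (\ref{gen1})--(\ref{symptransvection}) continue to generate $\Sp(V)$. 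As in the $q>2$ proof, (\ref{gen2}) and (\ref{gen3}) permute classes and hence preserve each $\overline P^\lambda_\ell$, so everything reduces to the action of $g_1$ and $g_2$.

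The one structural change from the $q>2$ case is that, with a single digit, there are no carries: a square $z^2$ created by a substitution is instead replaced by $z$, lowering the degree by one. I would split $g\cdot f_0$ (for $g\in\{g_1,g_2\}$ and $f_0\in\overline P^\lambda_{\ell(\lambda)}$) by total degree. The degree-$\lambda$ part consists of the square-free monomials of the raw expansion, which is exactly the action of $g$ on $f_0$ computed in the exterior algebra $\wedge(V^*)$, where squares vanish. By Lemma~\ref{submodule} this lies in $S^\lambda_{\ell(\lambda)}$, so its image under $\phi$, the degree-$\lambda$ part of $g\cdot f_0$, lies in $\overline P^\lambda_{\ell(\lambda)}\subseteq\MM_r$ and needs no further argument.

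It remains to handle the lower-degree terms, the genuinely new feature when $q=2$. A summand of degree $\lambda-d$ comes from exactly $d$ reductions $z^2\mapsto z$; its $\HH$-type is $\lambda-d$, so admissibility requires only that its level be at most $\ell(\lambda-d)=\ell(\lambda)+d$. These summands are precisely those produced in the $q>2$ computations (\ref{sigmadec})--(\ref{eq13}) by carrying a squared variable onward, the only difference being that for $q=2$ the variable stays in the single digit. Such a retained variable can pair with at most one existing factor to create a new $W_i=x_iy_i$, so each reduction raises the level by at most one; after $d$ reductions the level has grown by at most $d$, exactly offset by the $d$-unit relaxation of the ceiling. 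This mirrors the carry bookkeeping of the $q>2$ proof, where a carry lowers $s_j$ by one, raises $\ell_j$ by one, and raises $\sigma_j$ by at most one.

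The main obstacle is this level bookkeeping. One must confirm, going through (\ref{sigmadec})--(\ref{eq13}) with $\alpha=1$ and all squares reduced by $z^2=z$, that no single reduction manufactures more than one new $W_i$ and that each lower-degree homogeneous component genuinely lies in the span $\overline P^{\lambda-d}_{\ell(\lambda)+d}$, rather than merely among the monomials of that apparent level (recall that a class such as $(W_i+W_j)$ has level $0$ even though its monomials look like level-$1$ terms). Because the square-free parts are already settled by Lemma~\ref{submodule}, this is a finite and completely explicit check; once it is done, stability of $\MM_r$ under $g_1$ and $g_2$, and hence under all of $\Sp(V)$, follows.
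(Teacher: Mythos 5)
Your argument is correct and follows essentially the same route as the paper: the paper likewise disposes of the generators other than $g_1$ and $g_2$ trivially and then observes that the $q=2$ reduction $z^2\mapsto z$ is exactly the composite of a carry out of the single digit followed by a carry back into it, so the degree and $\HH$-type drop by one, the ceiling $\ell$ relaxes by one, and $\sigma$ grows by at most one per reduction, all of which is already verified in the explicit $q>2$ computations. Your degree-graded splitting and appeal to Lemma~\ref{submodule} for the square-free part is only a mild repackaging of that same bookkeeping.
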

\begin{proof}
We again examine the actions of $g_1$ and $g_2$, since the result is trivial for the other generators of $\Sp(2m,2)$. We now must do the calculations modulo
$(x_2^2-x_2, y_1^2-y_1)$, instead of modulo $(x_2^2-1, y_1^2-1)$.  Instead of redoing the calculations, we make the following observation:  when $q=2$, replacing the squared variable in a function by the unsquared one is equivalent to the situation, in the $q=2^t>2$ case, of a carry from the $0^\mathrm{th}$ digit to the $1^\mathrm{st}$ digit, replacing the squared variable in the $0^\mathrm{th}$ digit by $1$, followed by a carry from the $(t-1)^\mathrm{th}$ digit into the  $0^\mathrm{th}$ digit.  Therefore, the calculations in the proof of
Lemma~\ref{submodule lemma} also prove this lemma for $q=2$.
\end{proof}
%\begin{Remark}
%Notice that we could regard the above calculations as combining the cancellation of square terms, as
%before, combined with a carry and reduction of $s$.  Maybe the explicit calculations need not be repeated.
%\end{Remark}

Here we comment that Lemma~\ref{shifts} does not hold if $q=2$, because the elements of $\F_2$ sum to $1$,
while the elements of any other finite field sum to $0$.  Thus the projectors developed in Lemma~\ref{project} are not available, and we have the need for this section.
\begin{Lemma}\label{permuteq2} The following lemmas from Section~\ref{admiss_bases} hold when $q=2$: (a) Lemma~\ref{permutesubscripts},
(b) Lemma~\ref{R-up}, (c) Lemma~\ref{get-m}, (d) Lemma~\ref{S-down}.
 Furthermore, when $q=2$, we may drop the condition, ``modulo the span of SBFs of lower $\HH$-type'' in all cases.
\end{Lemma}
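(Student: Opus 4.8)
The plan is to exploit the fact that $q=2$ forces $t=1$, so that every SBF is a single square-free homogeneous polynomial $f=f_0$ with no higher digits. The two features that made the characteristic-two arguments delicate for $t>1$ then both disappear: there are no other digits whose shape must be preserved while we adjust $f_0$, and there are no inter-digit carries, so the only mechanism that can lower the $\HH$-type (which for $t=1$ is just the degree $\lambda=s_0$) is an internal reduction $x_i^2\mapsto x_i$ or $y_i^2\mapsto y_i$. I would therefore re-establish each of (a)--(d) using elements of $\Sp(V)$ whose action on the relevant monomials never creates a repeated variable; the degree is then preserved exactly, no lower-degree tail appears, and the clause ``modulo the span of SBFs of lower $\HH$-type'' may be dropped.

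First I would treat (a), Lemma~\ref{permutesubscripts}. Since $t=1$ there is nothing to protect in other digits, and two square-free functions of the same class $(\rho,\sigma,\tau,\upsilon)$ differ only by a relabelling of the index sets $R,R',S,T,U$ (respecting the unordered pairing of $R$ with $R'$) together with, for each $i\in T$, a choice of $Z_i\in\{x_i,y_i\}$. Every such move is realized by an element of the subgroup generated by the coordinate permutations (\ref{gen2}) and the involution $x_1\leftrightarrow y_1$ of (\ref{gen3}); these lie in $\Sp(V)$ and send square-free monomials to square-free monomials of the same degree. A single such element carries $f$ to the target exactly, which is already where the ``modulo lower $\HH$-type'' clause becomes unnecessary.

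Next I would handle (b), (c), (d) by running the computations of Lemmas~\ref{R-up}, \ref{get-m}, \ref{S-down} with $g_2=g_2(1)$ of (\ref{symptransvection}) in place of the transvection and with the index-interchanges supplied by part (a) in place of the projector steps. The one thing to verify is that $g_2$ introduces no square. In (c) one has $(g_2+1)(x_1y_1)=x_2y_1$, since $g_2$ brings in $x_2$ while fixing $y_1$ and the index $2$ is unused. In (b), after using (a) to bring the two free $T$-indices to the form $x_1y_2$, the substituted variables fall in disjoint $x$- and $y$-slots, so $g_2(x_1y_2)=(x_1+x_2)(y_1+y_2)$ is square-free; taking $h$ from (a) with $hf=y_1x_2\cdots$, the operator $g_2+1+h$ sends $f$ to $(x_1y_1+x_2y_2)\cdots$ exactly. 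In (d) one takes $g=1+h$ with $h$ the interchange of $1$ and $2$ from (a), giving $(1+h)(x_1y_1\cdots)=(x_1y_1+x_2y_2)\cdots$. In every case $g_2$ and $h$ fix all variables of index $\ge3$, so the factor of $f$ in those variables is untouched and can be pulled outside the group-ring operator, and the asserted class-change holds with no remainder.

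The main obstacle I anticipate is the case-by-case check that a collision-free substitution is always possible in (b)--(d). This is precisely the point that fails for Lemmas~\ref{T-up} and \ref{HH-down}, whose proofs apply $g_2$ to products such as $(x_1y_1+x_3y_3)(x_2y_2+x_4y_4)$ in which the substitution $x_1\mapsto x_1+x_2$, $y_2\mapsto y_1+y_2$ meets variables $x_2$, $y_1$ already present, forcing genuine squares $x_2^2$, $y_1^2$; over $\F_2$ these reduce to $x_2$, $y_1$ and leave lower-degree terms that cannot be cleared without the projectors. Pinning down that the monomials occurring in (a)--(d) avoid such collisions, and confirming for (a) that permutations and swaps generate all same-class relabellings (including arbitrary re-pairings of $R$ and $R'$), is the only real content; granting these, exactness and the removal of the ``modulo lower $\HH$-type'' qualifier follow at once from the absence of carries when $t=1$.
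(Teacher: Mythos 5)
Your proposal is correct and follows essentially the same route as the paper: handle (a) by direct coordinate permutations and $x_i\leftrightarrow y_i$ swaps (possible since $t=1$ leaves only one digit), and observe that the original proofs of Lemmas~\ref{R-up}, \ref{get-m}, \ref{S-down} use neither shift operators nor projectors in an essential way, with exactness following because the relevant substitutions create no squares and hence no carries. Your explicit collision-free check is a slightly more detailed justification of the paper's terser ``the proofs are still valid'' remark, but the argument is the same.
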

\begin{proof}
 For Lemma~\ref{permutesubscripts}, since there is only one
digit, we can just permute the subscripts directly using group elements of the form (\ref{gen2}), and we can interchange $x_i$ and $y_i,\ 1\le i\le m$ by combining elements of the form (\ref{gen2}) with (\ref{gen3}).
In the cases of Lemmas~\ref{R-up},  \ref{get-m}, and \ref{S-down}, we do not use any shift operators in the proofs.  Also, the projectors
are used only to keep digits other than the $j^\mathrm{th}$ digit from changing, but now we have only one digit.
Thus the proofs are still valid for the case $q=2$.
\end{proof}

The $q=2$ version of Lemma~\ref{T-up} requires a little more work to prove.
\begin{Lemma}\label{2T-up}
Let  $f$ be  an SBF, and let the class of $f$ be $(\rho,\sigma,\tau,\upsilon)\ne (1,0,m-2,0),\ \rho>0$.  Then there exists a group ring element $g$ such that $gf$ is  of class
$(\rho-1,\sigma,\tau+2,\upsilon)$.
\end{Lemma}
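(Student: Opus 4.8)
The plan is to follow the case analysis in the proof of Lemma~\ref{T-up}, splitting according to which of $\rho>1$, $\sigma>0$, or $\upsilon>0$ holds; at least one does, since the excluded class $(1,0,m-2,0)$ is the unique one with $\rho=1$ and $\sigma=\upsilon=0$. In each case I would first invoke the $q=2$ form of Lemma~\ref{permutesubscripts} (Lemma~\ref{permuteq2}(a)) to move the relevant indices into a standard position, and then apply the operator $g_2-1$ exactly as before. The essential new feature is that over $\F_2$ a repeated variable is reduced by $z^2=z$, rather than being carried to a higher digit (as when $q>2$) or killed by $z^2=0$ (as in the exterior algebra of Section~\ref{wedges}). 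Consequently $(g_2-1)f$ no longer equals a single homogeneous function: it produces the desired function of class $(\rho-1,\sigma,\tau+2,\upsilon)$ together with extra terms of strictly smaller degree.

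When an unused index is available these extra terms can be avoided entirely. Using Lemma~\ref{permuteq2}(a), arrange the pair to be broken so that $g_2$ couples a pair index to an index lying in $U$; concretely, take $f=(w_1+w_3)\cdots$ with $2\in U$. Then $(g_2-1)(w_1+w_3)=x_2y_1$, and since the freed index $3$ and the variables $x_2,y_1$ share no variable with the rest of $f$, no square is ever formed and no reduction occurs. Hence $(g_2-1)f$ is exactly a function in which the pair $\{1,3\}$ has been replaced by the singletons $y_1,x_2$, that is, a function of class $(\rho-1,\sigma,\tau+2,\upsilon)$. This disposes of every case with $\upsilon>0$.

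The remaining case $\upsilon=0$ is the main obstacle. Now no unused index exists, so the lower-degree terms genuinely occur; a direct computation shows that after $g_2-1$ they all share the factor produced by the transvection and take the shapes $x_2w_1+y_1w_2$ (one degree below the target) and $x_2y_1$ (two degrees below). To remove them I would use the filtration of $k[V]$ by the subspaces $F_{\le d}$ spanned by functions of degree at most $d$: these are $k\Sp(V)$-submodules, because a linear substitution followed by the reductions $z^2=z$ can never raise the total degree, and the action induced on each quotient $\mathrm{gr}_d$ is the degree-preserving exterior action of Section~\ref{wedges}. Modulo $F_{\le\lambda-1}$ the element $(g_2-1)f$ is precisely the target, so some member of $k\Sp(V)f$ has the target as its top-degree part; one then descends through the filtration, at each stage exhibiting the lower-degree correction term as an explicit image of $f$ under a further product of transvections (for example $g_1-1$ strips off the lowest term $x_2y_1$) and subtracting it, until the target is obtained on the nose. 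The delicate point, and the reason this case needs separate treatment, is verifying at each degree $d<\lambda$ that the correction term's image in $\mathrm{gr}_d$ lies in the submodule generated there by the degree-$d$ parts of elements of $k\Sp(V)f$; this char-$2$ bookkeeping has no counterpart in the $q>2$ argument and constitutes the ``little more work.''
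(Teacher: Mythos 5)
Your treatment of the case $\upsilon>0$ coincides with the paper's: with $2\in U$ one has $(g_2-1)(w_1+w_3)=y_1x_2$ with no reduction occurring, so the target function is obtained exactly. The gap lies in the case $\upsilon=0$, at precisely the step you flag as ``the delicate point'' and then defer as bookkeeping: showing that the lower-degree correction terms can actually be subtracted, i.e.\ that they lie in $k\Sp(V)f$. This is not routine, and the plan you state --- exhibit each correction term as an explicit image of $f$ under further transvections and strip it off degree by degree --- fails as written for the degree-$(\lambda-1)$ corrections when $\rho>1$. Writing $f=(w_1+w_3)(w_2+w_4)\cdot h$, those corrections are $w_1x_2\cdot h$ and $y_1w_2\cdot h$, each of class $(\rho-2,\sigma+1,\tau+1,\upsilon+2)$. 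The only degree-lowering operations available before Lemma~\ref{2T-up} itself is established are $(g_1-1)(w_1+w_i)=y_1$, taking $(\rho,\sigma,\tau,\upsilon)$ to $(\rho-1,\sigma,\tau+1,\upsilon+1)$, and $(g_1-1)w_1=y_1$, taking it to $(\rho,\sigma-1,\tau+1,\upsilon)$; a single such move (which is all a one-degree drop allows) followed by the degree-preserving moves of Lemma~\ref{permuteq2} can never lower $\rho$ by $2$, so that class is unreachable from the class of $f$ by these tools, and no other argument is offered.

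The paper's proof sidesteps this by cancelling, rather than subtracting, the degree-$(\lambda-1)$ junk: starting from $(g_2-1)f=\bigl(y_1x_2(w_3+w_4)+y_1x_2+y_1w_2+w_1x_2\bigr)\cdot h$, one interchanges $x_1$ and $y_1$ and applies $g_1-1$, then interchanges the subscripts $1$ and $2$ and applies $g_1-1$ again; in characteristic $2$ the intermediate terms cancel, leaving $\bigl(y_1y_2(w_3+w_4)+y_1y_2\bigr)\cdot h$. The single surviving error term $y_1y_2\cdot h$ has class $(\rho-2,\sigma,\tau+2,\upsilon+2)$, which \emph{is} reachable from $f$ by applying $(g_1-1)(w_1+w_i)=y_1$ twice, so it can be subtracted. (In the sub-case $\sigma>0$, with $f=w_1(w_2+w_3)\cdot h$, the corrections $w_1x_2\cdot h$, $y_1w_2\cdot h$, $y_1x_2\cdot h$ all have reachable classes and your direct-subtraction plan does work, exactly as in the paper.) To close the gap you need either this cancellation device or some other proof that $w_1x_2\cdot h+y_1w_2\cdot h\in k\Sp(V)f$; as it stands the sub-case $\rho>1$, $\upsilon=0$ is unproved.
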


\begin{proof}
Since $(\rho,\sigma,\tau,\upsilon)\ne (1,0,m-2,0)$, and
$2\rho+\sigma+\tau+\upsilon=m$, we will consider three cases:
$\rho>1,\ \sigma>0$, and $\upsilon>0$.  In each case, by Lemma~\ref{permuteq2}(a), we may take $f$ to have the form indicated.  We omit the part of $f$ which is not acted on by $g$.

If $\rho>1$, we may assume $f=(x_1y_1+x_3y_3)(x_2y_2+x_4y_4)$ and we get:

\begin{eqnarray*}
(g_2-1)f&=&((x_1+x_2)y_1+x_3y_3)(x_2(y_1+y_2)+x_4y_4)-f\\
&=&y_1x_2(w_3+w_4)+y_1x_2+y_1w_2+w_1x_2.
\end{eqnarray*}
Next we interchange $x_1$ and $y_1$ and apply $(g_1-1)$ to get
\begin{eqnarray*}
(g_1-1)(x_1x_2(w_3+w_4)+x_1x_2+x_1w_2+w_1x_2)&=&y_1x_2(w_3+w_4)+y_1w_2.
\end{eqnarray*}
Then, interchanging subscripts $1$ and $2$ and again applying $(g_1-1)$, we get
\begin{eqnarray*}
(g_1-1)(x_1y_2(w_3+w_4)+w_1y_2)&=&y_1y_2(w_3+w_4)+y_1y_2.
\end{eqnarray*}

If $\rho>0,\ \sigma>0$, then we may assume $f=x_1y_1(x_2y_2+x_3y_3)$ and compute
\begin{eqnarray*}
(g_2-1)f&=&(x_1+x_2)y_1(x_2(y_1+y_2)+x_3y_3)-f\\
&=&(w_1w_2+(x_1+x_2)y_1w_3)+w_1x_2+y_1w_2+y_1x_2-f\\
&=&y_1x_2w_3+w_1x_2+y_1w_2+y_1x_2.
\end{eqnarray*}

If $\rho>0,\ \upsilon>0$, we may assume $f=(x_1y_1+x_3y_3),\ 2\in U$ and similarly get
\begin{eqnarray*}
(g_2-1)f&=&((x_1+x_2)y_1+x_3y_3)-f\\
&=&y_1x_2.
\end{eqnarray*}

Now we notice that for $1<i\le m,\ (g_1-1)(w_1+w_i)= y_1$.  Therefore, we can always go from a function of class $(\rho,\sigma,\tau,\upsilon),\ \rho\ne0,$ to one of class $(\rho-1,\sigma,\tau+1,\upsilon+1)$, reducing the degree $\lambda$ by $1$.  Similarly, since $(g_1-1)w_1\equiv y_1$, we can go from a function of class $(\rho,\sigma,\tau,\upsilon),\ \sigma>0$, to one of class
$(\rho,\sigma-1,\tau+1,\upsilon)$.  Therefore, in the first case, where $\rho>1$, by Lemma~\ref{permuteq2}(a), we may subtract the extra term of degree $2$.  In the second case, where $\sigma>0$, we may subtract the extra term of degree $2$ and the two terms of degree $3$.  We have exactly the functions we want.
\end{proof}

  The following lemma takes the place of Lemma~\ref{HH-down}.
It shows that given an SBF of degree $\lambda=s>1$ and of a class having maximum
$\sigma$ for that degree, we can find a group ring element which will give us an SBF
with $\lambda$ reduced by $1$ and $\sigma$ maximum for the new degree.
\begin{Lemma}\label{2HH-down}
Let $s\le 2m-r$ and let $f$ be an SBF of class $(0,\sigma,\tau,\upsilon)$ of degree
$\lambda=s>1$, where
$\sigma=\min\{2m-r-s+\delta(m>r)(m-r),\lfloor s/2\rfloor\},\
\tau=s-2\sigma,$ and $\upsilon=m-\sigma-\tau$.
\begin{enumerate}[\rm(a)]
\item If $\sigma<s/2-1$, then there exists
$g\in k[\Sp(V)]$ such that $gf$ is of class
$(0,\sigma+1,\tau-3, \upsilon+2)$.
\item  If $\sigma=s/2-1$ or
$\sigma=(s-1)/2$, then there is a $g\in k[\Sp(V)]$ such that
$gf$ is of class $(0,\sigma,\tau-1,\upsilon+1)$.
\item If $\sigma=s/2$, then there is a $g\in
k[\Sp(V)]$ such that $gf$ is of class
$(0,\sigma-1,\tau+1,\upsilon)$.
\end{enumerate}
\end{Lemma}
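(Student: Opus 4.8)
The plan is to imitate the proof of Lemma~\ref{2T-up}: put $f$ into a convenient normal form using the $q=2$ form of the subscript‑permutation lemma (Lemma~\ref{permuteq2}(a)), apply a short word in the transvections $g_1=g_1(1)$, $g_2=g_2(1)$, $g'_2$, and reduce every square via the identity $z^2=z$, which holds for functions on $V=\F_2^{2m}$. Since $t=1$ there is a single digit, so there are no carries and no projectors; the only bookkeeping is to track which index lands in which of the sets $S,T,U$ after the substitution and to peel off the unwanted terms. Everything rests on a handful of one‑ and two‑index computations, which I would record first.

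Writing $W_i=x_iy_i$, a direct calculation modulo $(x_i^2-x_i,\,y_i^2-y_i)$ produces the clean degree‑lowering identities
\begin{align*}
(g_1-1)W_1 &= y_1, & (g'_2-1)(x_1x_2) &= x_1, & (g_2-1)(x_1x_2x_3) &= x_2x_3,
\end{align*}
together with the degree‑\emph{preserving} $W$‑creating identity $(g_2-1)(x_1y_2)=W_1+W_2+y_1x_2$. The first converts an $S$‑index into a $T$‑index, so $(\sigma,\tau)\mapsto(\sigma-1,\tau+1)$; the second and third delete one $T$‑index into $U$, so $(\tau,\upsilon)\mapsto(\tau-1,\upsilon+1)$; the last manufactures a new $W$‑factor at the cost of the two extra terms $W_2$ and $y_1x_2$. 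In each case $g_1,g_2,g'_2$ disturb only the variables $x_1,y_1,x_2,y_2$, so once the active indices are moved to $1,2,3$ by Lemma~\ref{permuteq2}(a) every other factor of $f$ is left alone, and the local computations above determine the full action.

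Now I would assemble the three cases. Case (c) is immediate: here $\tau=0$ and $f=W_1W_2\cdots$, so $(g_1-1)f=y_1W_2\cdots$ is exactly of class $(0,\sigma-1,\tau+1,\upsilon)$ with no error term. Case (b) with $\tau=2$ is the single application $(g'_2-1)(x_1x_2)=x_1$, again landing cleanly in class $(0,\sigma,\tau-1,\upsilon+1)$. The subcase of (b) with $\tau=1$ and Case (a) both require deleting a \emph{lone} $T$‑index, which no single transvection can do without colliding with a $W$‑factor; this is the one genuinely delicate point. There I would fold the lone $T$‑index against an adjacent $W$‑factor via $(g_2-1)(x_1W_2)=x_2W_1+x_2y_1+W_2$, keep the wanted term $W_2$, and cancel the two errors $x_2W_1$ (of the same degree as $f$) and $x_2y_1$ (of the target degree but wrong class): the former equals $hf$ for the index‑swap $h$ of Lemma~\ref{permuteq2}(a), and the latter equals $h'f$ where $h'$ is $(g'_2-1)$ followed by a swap. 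Over $\F_2$ the combination $g=(g_2-1)-h-h'$ (all signs being immaterial) then isolates $W_2$ exactly. Case (a), where $\tau\ge3$, is handled by composing one degree‑preserving $W$‑creation (the last identity, turning three $T$‑indices into $W_1$ together with two freed indices, after deleting the analogous extra terms) with this lone‑$T$‑deletion step, cleaning up the new extra terms the same way.

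The conceptual content is small — every identity is a one‑line characteristic‑$2$ substitution — so I expect the main obstacle to be purely organizational, namely the cleanup in Case (a) and in the $\tau=1$ part of Case (b): because the error terms there are \emph{not} of strictly lower degree, they cannot be quietly absorbed and must instead be generated explicitly from $f$ by auxiliary group‑ring elements and subtracted, exactly as in the closing paragraph of the proof of Lemma~\ref{2T-up}. Finally, to confirm that the output has maximal $\sigma$ for the new degree $s-1$ — which is what the surrounding induction needs — I would compare $\min\{2m-r-s+\delta(m>r)(m-r),\lfloor s/2\rfloor\}$ with its value at $s-1$; the three regimes of this minimum are precisely the trichotomy $\sigma<s/2-1$, $\sigma\in\{s/2-1,(s-1)/2\}$, and $\sigma=s/2$ of cases (a), (b), (c).
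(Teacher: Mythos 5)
Your cases (b) and (c) are fine and essentially coincide with the paper's computations (the paper uses $(g_2-1)(x_1x_2)=x_2$, $(g_1-1)w_1=y_1$, and for $\tau=1$ the mirror image of your ``fold a lone $T$-index against a $W$-factor'' identity $(g_2-1)(w_1y_2)=w_1+y_1w_2+x_2y_1$, cancelling the two errors exactly as you describe). The closing remark about the trichotomy of $\min\{2m-r-s+\delta(m>r)(m-r),\lfloor s/2\rfloor\}$ is also the right sanity check.

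The gap is in case (a). Your ``degree-preserving $W$-creation'' reads, in full, $(g_2-1)(x_1y_2\cdots)=(W_1+W_2)\cdots+y_1x_2\cdots$. The term $y_1x_2\cdots$ is of the same class as $f$ and can be cancelled, but $W_1\cdots$ and $W_2\cdots$ each lie in the class $(0,\sigma+1,\tau-2,\upsilon+1)$ --- the very class with the enlarged $S$-set that you are trying to reach --- so ``deleting the extra term $W_2\cdots$'' presupposes you already possess a function of that class. This is circular: no auxiliary element applied to $f$ is yet known to produce a \emph{single} $W$-factor, and indeed $(W_1+W_2)\cdots$ is exactly the class-$(1,\sigma,\tau-2,\upsilon)$ function already supplied by Lemma~\ref{permuteq2}(b), so your identity reproves that lemma and gives no way to split the pair. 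The paper's proof of (a) uses a genuinely different computation to break this symmetry: it first forms $f'=(w_1+w_3)y_2\cdots$ and $f''=(w_1+w_3)x_2\cdots$ of class $(1,\sigma,\tau-2,\upsilon)$ (possible since $\tau\ge 3$ here), observes that $(g_2-1)f''=y_1x_2\cdots$ realizes the class $(0,\sigma,\tau-1,\upsilon+1)$, and then computes
$$(g_1+g_2)f'=w_1\cdots+y_1(w_2+w_3)\cdots+y_1y_2\cdots+x_2y_1\cdots,$$
in which every error term lies in one of the two classes just shown to be attainable ($y_1(w_2+w_3)\cdots$ has the class of $f'$; the other two have the class of $(g_2-1)f''$), leaving the single $w_1\cdots$ of the target class $(0,\sigma+1,\tau-3,\upsilon+2)$. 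Some such asymmetric collapse of an $R$-pair against a $T$-variable is needed; your proposal as written does not supply it.
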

\begin{proof}
\begin{enumerate}[\rm(a)]
\item In the  first case, by Lemma~\ref{permuteq2}(b), we instead consider $f'$ and $f^{''}$ of
class
$(1,\sigma,\tau-2, \upsilon)$ of the forms
$f'=(w_1+w_3)y_2\cdots$ and $f^{''}=(w_1+w_3)x_2\cdots$.
We first calculate:
$$(g_2-1)f'=y_1x_2\cdots.$$
  We also have (omitting variables with subscripts higher than $3$):
\begin{eqnarray*}
(g_1+g_2)f'&=&((x_1+y_1)y_1+x_3y_3)y_2+((x_1+x_2)y_1+x_3y_3)(y_1+y_2)\\
&=&y_1(w_2+w_3)+w_1+y_1y_2+x_2y_1.
\end{eqnarray*}
Since the first term is of the same class we started with, and the last two are of
the same class we obtained from $f^{''}$, we may subtract
them, leaving only $w_1$, the term we want.

\item If $\sigma=s/2-1,\ \tau=2$, by Lemma~\ref{permuteq2}(a), we may take $f=x_1x_2\cdots$, and get
$(g_2-1)f=x_2\cdots$.

\noindent If $\sigma=(s-1)/2>0$,  we may take $f=w_1y_2\cdots$ and get (omitting variables
with subscripts higher than $2$)
$(g_2-1)f=y_1w_2+w_1+y_1x_2$.  We may subtract the first term, since it is of the
same class as $f$.  We subtract the last term, as it is of the same class as
$(g_1-1)f=y_1y_2$, leaving the term we want, $w_1$.

\item If $\sigma=s/2$, we take $f=w_1\cdots$ and have $(g_1-1)f=y_1\cdots$.
\end{enumerate}\end{proof}

The following lemma gives us an SBF which generates all the other
SBFs in the submodule, according to the lemmas of this section.
\begin{Lemma}
If $r\ge m$, there is an SBF of class $(0,0,2m-r,r-m)$ in $\CC_r$. If
$r\le m$, there is an SBF of class $(0,m-r,r,0)$ in $\CC_r$.
\end{Lemma}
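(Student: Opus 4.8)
The plan is to realize the required SBF as the top-degree part of the characteristic function (\ref{charfunctioneq}) of a standard element of $\II_r$, which lies in $\CC_r$ by definition. Since $q=2$ this characteristic function is $\prod_{i=1}^{2m-r}(1-x_i)$ when $r\ge m$ and $\prod_{i=1}^{m}(1-x_i)\prod_{j=1}^{m-r}(1-y_j)$ when $r\le m$. In either case its \emph{unique} monomial of top degree $2m-r$ is $f^*=x_1\cdots x_{2m-r}$, respectively $f^*=x_1\cdots x_m\,y_1\cdots y_{m-r}=w_1\cdots w_{m-r}\,x_{m-r+1}\cdots x_m$, and reading off the index sets shows that $f^*$ is an SBF of class $(0,0,2m-r,r-m)$, respectively $(0,m-r,r,0)$, as claimed. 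So it suffices to produce, inside $\CC_r$, an SBF of the same class as $f^*$.

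For $r\ge m$ this can be done exactly by Möbius inversion. Over $\F_2$ we have $x_i=1+(1-x_i)$, so
\[
f^*=\prod_{i=1}^{2m-r}\bigl(1+(1-x_i)\bigr)=\sum_{J\subseteq\{1,\dots,2m-r\}}\ \prod_{i\in J}(1-x_i).
\]
Each summand $\prod_{i\in J}(1-x_i)$ is the characteristic function of $U_J=\{v\mid x_i(v)=0,\ i\in J\}=(\mathrm{span}\{f_i\mid i\in J\})^\perp$, the complement of the totally isotropic subspace $\mathrm{span}\{f_i\mid i\in J\}$, hence $U_J\in\II_{2m-\abs J}$. As $\abs J\le 2m-r$ we get $2m-\abs J\ge r$, so by transitivity of $\Sp(V)$ on $\II_{2m-\abs J}$ and the nesting (\ref{Cnested}) each $\chi_{U_J}\in\CC_{2m-\abs J}\subseteq\CC_r$. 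Therefore $f^*\in\CC_r$, settling the case $r\ge m$ (and, since the class there is level $0$, also the boundary case $r=m$).

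For $r<m$ the class $(0,m-r,r,0)$ has positive level $m-r$, i.e.\ $f^*$ contains the factors $w_i=x_iy_i$, and the Möbius argument breaks down: producing a single $w_i$ would force the characteristic function of $\{x_i=y_i=0\}$, whose perp is the hyperbolic plane $\mathrm{span}\{e_i,f_i\}$, a nondegenerate subspace lying in no $\II_{r'}$. Such factors can only be created by symplectic transvections, which is exactly what the operators of Lemmas~\ref{2T-up} and \ref{2HH-down} supply. My plan is to pass to the associated graded of the $\HH$-type (here, degree) filtration: for $t=1$ the filtration $Y((1))\subset\cdots\subset Y((2m-1))$ has graded pieces $Y((s))/Y((s-1))\cong S^{s}$, and by Lemma~\ref{submodule} the levels give the $\Sp(V)$-submodules $S^{s}_\ell$. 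Because $f^*$ has the minimal level $m-r=\lambda-m$ for $\lambda=2m-r$, its image $\overline{f^*}$ lies in the smallest submodule $S^{2m-r}_{m-r}$, which is isomorphic to the Weyl module $S^r_0$ under the duality $\delta$ of Section~\ref{wedges}. Thus $\CC_r$ contains $f^*$ modulo the span $Y((2m-r-1))$ of lower-degree functions, after which one applies the exact class- and degree-adjusting operators of this section to recover an honest SBF of class $(0,m-r,r,0)$.

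The main obstacle is precisely this last upgrade from ``membership modulo lower degree'' to exact membership. Unlike the case $r\ge m$, the lower-degree correction terms in $\chi$ are themselves of positive level and are genuinely new functions for $\CC_r$: they are not characteristic functions of any larger totally isotropic configuration, so they cannot be cleared by subtracting elements of the smaller modules $\CC_{r'}$ with $r'>r$. I expect to resolve this not in isolation but by interleaving the present statement with the generation argument, running an upward induction on degree: descend from $\chi$ by repeated application of Lemma~\ref{2HH-down} to land, in $\CC_r$ modulo strictly lower degree, a maximal-level SBF of each degree, and then invoke the inductive hypothesis that all lower-degree $r$-admissible SBFs already lie in $\CC_r$ to absorb the error term, exactly as in the proof of Lemma~\ref{generation2} for $q>2$.
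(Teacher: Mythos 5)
Your M\"obius-inversion argument for $r\ge m$ is correct and is a genuinely different, cleaner route than the paper's. Writing $x_1\cdots x_{2m-r}=\sum_{J}\prod_{i\in J}(1-x_i)$ over $\F_2$ exhibits the target monomial \emph{exactly} as a sum of characteristic functions of members of $\II_{2m-|J|}$ with $2m-|J|\ge r$, all of which lie in $\CC_r$ by (\ref{Cnested}); the paper instead reaches the same monomial through an explicit chain of group-ring manipulations. Your diagnosis of why this fails for $r<m$ (the factor $(1-x_i)(1-y_i)$ is the characteristic function of the perp of a hyperbolic plane, which lies in no $\II_{r'}$) is also exactly right.

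For $r<m$, however, your plan has a genuine gap. You propose to descend from $\chi$ by Lemma~\ref{2HH-down} and absorb the error ``exactly as in the proof of Lemma~\ref{generation2},'' but the ingredient that makes that absorption work for $q>2$ is the final clause of Lemma~\ref{HH-down}: the descent operators can be chosen to send monomials of lower $\HH$-type to \emph{strictly} lower $\HH$-type than the new target. That clause is proved with the shift operators and projectors of Lemmas~\ref{shifts} and~\ref{project}, which the paper explicitly notes do not exist for $q=2$ (the elements of $\F_2$ sum to $1$), and Lemma~\ref{2HH-down} carries no such clause. Concretely: $\chi=m_++e$ with $e$ in the span of monomials of degree at most $2m-r-1$; any group-ring element preserves that span, so after descending to degree $s$ the error $ge$ can still occupy any degree up to $2m-r-1$, including degrees equal to or \emph{above} the target degree $s$. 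Hence the upward induction never obtains a clean seed at any intermediate degree, while producing the exact top seed $m_+$ would require all lower degrees to be done first --- the scheme is circular. The paper's actual mechanism is different in kind: it factors $\chi=(x_1-1)(x_2-1)f_H$ with $f_H\in\CC_r$ by (\ref{Cnested}), and peels off the unwanted terms \emph{exactly}, each correction being either the characteristic function of another member of some $\II_{r'}$ (in $\CC_r$ by transitivity and nesting) or an explicitly produced multiple of $f_H$ such as $y_1f_H$ or $f_H$ itself; this yields $x_1y_2f_H$, and then the full monomial, in $\CC_r$ on the nose, with no ``modulo lower degree'' ever invoked. Some device of this kind is what your $r<m$ argument is missing.
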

\begin{proof}
We may assume $1<r<2m-1$, since the case $r=1$ is trivial and the
case $r=2m-1$ is the incidence of points with all hyperplanes
\cite{CSX1}.
We will pick a particular $r$-subspace $W$ and show how
to obtain the desired monomial SBF by applying
a sequence of group ring elements.
If $r\le m$, we take the totally isotopic $r$-subspace
$x_1=0$, \dots, $x_r=0$, $x_{r+1}=0$, $y_{r+1}=0$, \dots, $x_{m}=0$,
$y_{m}=0$,
with characteristic function
\begin{equation*}
f=(x_1-1)(x_2-1)\cdots(x_r-1)(x_{r+1}-1)
(y_{r+1}-1)\cdots(x_{m}-1)(y_{m}-1),
\end{equation*}
while if $r\ge m$, we pick
the $r$-subspace $x_1=0$, \dots, $x_{2m-r}=0$, which
is the complement of a totally isotropic subspace with respect to the form.
Its characteristic function is
\begin{equation*}
f=(x_1-1)(x_2-1)\cdots(x_{2m-r}-1).
\end{equation*}
Let $H$ be the $(r+2)$-dimensional subspace of $V$
defined by the same equations as $W$, but with the
two equations $x_1=0$ and $x_2=0$ omitted and let
$f_H$ be its characteristic function. We have
\begin{equation*}
f=(x_1-1)(x_2-1)f_H.
\end{equation*}
By (\ref{Cnested}), we have $f_H\in\CC_r$.
Next let $f'=(x_1-1)(y_2-1)f_H$ be the function obtained from
$f$ by interchanging $x_2$ and $y_2$.  Then $f'\in\CC_r$, since $\CC_r$ is a
$k\Sp(V)$-module.
We calculate:
\begin{equation*}
(g_2-1)f'= [x_1y_1+x_2y_2+1+(y_1-1)(x_2-1)]f_H.
\end{equation*}
We may subtract the last term because it represents the
characteristic function of another $r$-subspace.  Then we calculate:
\begin{equation*}
(g_1-1)(x_1y_1+x_2y_2+1)f_H=y_1f_H.
\end{equation*}
Using these results, we see that
$x_1f_H$, $y_2f_H$ and $f_H$ lie in $\CC_r$. If we subtract these
terms from $f'$, we have  $f''=x_1y_2f_H\in \CC_r$.
If $m=2$, we are done. If $m>2$, $f_H$ will have factors
such as $(x_i-1)$ or $(x_i-1)(y_i-1)$, $i\geq 3$. We now describe group
ring operations on $f''$ which convert these factors to monomials.
By permuting the subscripts $1$, $2$ and $i$, we can obtain from  $f''$ a function which
starts with either $(x_1-1)x_2$ or $(x_1-1)(y_1-1)x_2$. We
calculate:
\begin{eqnarray*}
g_2(x_1-1)x_2&=&x_1x_2\\
(g_2-1)(x_1-1)(y_1-1)x_2&=&x_1(y_1-1)x_2\\
(g_2-1)(x_1-1)y_1x_2&=&x_1y_1x_2,
\end{eqnarray*}
where we interchanged $x_1 $ and $y_1$ in going from the second line
to the third line. By applying these operators to $f''$,
we obtain a new function in which the factors of $f_H$
involving the index $i$ have been replaced by monomials.
This process can be repeated for each $i\geq 3$
which occurs as an index in $f_H$. The resulting function
is the required monomial SBF.
\end{proof}

%%%%%%%%%%%%%%%%%%%%%%%%%%%%%%%%%%%%%%%%%%%%%%%%%%%%%%%%%%%%%%%%%%

\section{Proof of Theorem~\ref{rankthm}, Examples and Comparisons}
\label{examples}

We are ready to give the proof of Theorem~\ref{rankthm}.\\

\noindent{\bf Proof of Theorem~\ref{rankthm}.} By Theorem 5.2, the
$r$-admissible symplectic basis functions form a basis of $\CC_r$.
Hence
\begin{eqnarray}\label{intermediateform}
{\rm dim}(\CC_r) &=&1+{\mbox {number of non-constant $r$-admissible
SBFs}}\notag\\
&=&1+\sum_{(s_0,s_1\ldots ,s_{t-1})\leq (2m-r,\ldots
,2m-r)}\prod_{j=0}^{t-1} {\rm dim}(S_{\ell_j}^{\lambda_j}),
\end{eqnarray}
where $\lambda_j=2s_{j+1}-s_j$ and $\ell_j=(m-r)\delta(r\leq
m)+(2m-r-s_j)$. We use $a_{s_j,s_{j+1}}$ to denote ${\rm
dim}(S_{\ell_j}^{\lambda_j})$. By Lemma~\ref{wfilt}, we have
$$a_{s_j,s_{j+1}}={2m \choose 2s_{j+1}-s_j}-{2m \choose
2s_{j+1}-s_j-2\ell_j-2},$$ where $\ell_j$ is given as above. Let
$$A=(a_{i,j})_{1\leq i\leq 2m-r,\; 1\leq j\leq 2m-r}.$$
Then by (\ref{intermediateform}) we see that ${\rm dim}(\CC_r)$ is
equal to 1 plus the trace of $A^t$. The proof is complete.\qed
\vspace{0.1in}

As noted before, the $2$-rank formula for the incidence between
$\II_1$ and $\II_2$ when $m=2$ and $q=2^t$ is the same as what one
obtains using the formula obtained in \cite{CSX2} for the $p$-rank
of the incidence matrix of $\II_1$ and $\II_2$ when $m=2$ and
$q=p^t,\ p$ odd, and substituting $p=2$.  The same is not true when
$m>2$, except when $m=3$ and $q=2$.  We review the case for odd $q$.
\begin{Theorem}[\cite{CSX2}]
Let $V$ be a $2m$-dimensional vector space over $\F_q,\ q=p^t,\ p$
odd, equipped with a nondegenerate alternating bilinear form, and
let $C_{m,1}$ be the incidence matrix between totally isotropic
$m$-dimensional subspaces of $V$ and 1-dimensional subspaces of $V$.
Let $\lambda=pj-i$ and let
$$d_\lambda=\sum_{k=0}^{\lfloor\lambda/p\rfloor}(-1)^k{2m\choose k}{2m-1+\lambda-kp\choose 2m-1}.$$
Let $A'$ be the $m\times m$ matrix whose $(i,j)$-entry is
$$a'_{i,j}=\left\{
\begin{array}{lcr}
(d_\lambda+p^m)/2,\;&\mathrm{if}& i=j=m,\\
d_\lambda,\; &&\mathrm{otherwise}.
\end{array}\right.$$
Then
$$\rank_p(C_{r,1})=1+{\rm Trace}({A'}^t).$$
\end{Theorem}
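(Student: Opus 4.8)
The plan is to run the module-theoretic argument of Theorem~\ref{rankthm} in odd characteristic, where it is in fact cleaner because, as noted in the introduction, $\Sp(V)$ acts \emph{semisimply} on each graded piece of the truncated polynomial algebra. Exactly as in the even case, transitivity of $\Sp(V)$ on totally isotropic $m$-subspaces reduces the problem to computing $\dim\CC_m$, the $k\Sp(V)$-submodule of $k[\PP]$ generated by the characteristic function $\chi_{W_0}=\prod_{i=1}^m(1-x_i^{q-1})$ of one such subspace. First I would invoke the $\GL(V)$-structure of Section~\ref{GL_structure}, which holds in every characteristic: $k[\PP]=k\oplus Y_\PP$ is multiplicity-free, its composition factors are indexed by $\HH$-types $\sss$, and $L(\sss)$ is the twisted tensor product $S^{\lambda_0}\otimes(S^{\lambda_1})^{(p)}\otimes\cdots\otimes(S^{\lambda_{t-1}})^{(p^{t-1})}$, where now $S^\lambda$ is the degree-$\lambda$ piece of $\F_q[X_1,\dots,X_{2m}]/(X_i^p)$ and $\lambda_j=p\,s_{j+1}-s_j$. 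A routine inclusion--exclusion over the monomials divisible by some $X_i^p$ identifies $d_\lambda$ as $\dim S^\lambda$.

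Next I would determine precisely which composition factors occur in $\CC_m$, and with what contribution from each digit. Arguing on the leading monomial of $\chi_{W_0}$ as in Lemma~\ref{Cinside}, one sees that $\CC_m$ meets exactly the factors $L(\sss)$ with $\sss\le(m,\dots,m)$. The key simplification over characteristic two is that, by semisimplicity, no level truncation survives: for a non--self-dual type the \emph{entire} graded piece $S^{\lambda_j}$ enters in each digit, so $L(\sss)$ lies in $\CC_m$ in full and contributes $\prod_j d_{\lambda_j}$. Here I would use the twisted tensor product structure to factor the $\Sp(V)$-submodule of $L(\sss)$ digit by digit. This is why the generic entry of $A'$ is simply $d_{pj-i}$, in contrast to the proper Weyl subquotients $S^\lambda_\ell$ forced by the level filtration in Section~\ref{wedges}.

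The main obstacle is the corner entry $(d_{(p-1)m}+p^m)/2$, which records the one place where a whole graded piece fails to enter. The self-dual degree of the truncated algebra is $m(p-1)$, reached in digit $j$ only when $s_j=s_{j+1}=m$; there the odd-characteristic counterpart of the duality automorphism $\delta$ of Section~\ref{wedges} restricts to an $\Sp(V)$-equivariant operator on $S^{m(p-1)}$ whose eigenvalues are $\pm1$, and I would split that module into the corresponding eigenspaces, of dimensions $(d_{(p-1)m}\pm p^m)/2$, showing that $\CC_m$ contains exactly the $+1$-eigenspace. The delicate step is to prove that the eigenvalue gap is \emph{exactly} $p^m$: I would obtain this as the trace of $\delta$ on $S^{m(p-1)}$, computed as a signed count of $\delta$-fixed basis monomials, and I would identify the eigenspace lying inside $\CC_m$ by evaluating $\chi_{W_0}$ against it. It is here, and only here, that the geometry of the two families of maximal isotropic subspaces enters, and pinning down the constant $p^m$ rather than merely establishing a splitting is the crux of the whole computation.

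Finally I would assemble the count. With $a'_{s_j,s_{j+1}}$ denoting the dimension of the $\Sp(V)$-submodule contributed by digit $j$, the above gives
$$
\dim\CC_m \;=\; 1+\sum_{(s_0,\dots,s_{t-1})\le(m,\dots,m)}\ \prod_{j=0}^{t-1}a'_{s_j,s_{j+1}}
\;=\; 1+\operatorname{Trace}\bigl({A'}^{\,t}\bigr),
$$
the last equality being the standard identity that expresses a cyclic sum of products of matrix entries (subscripts modulo $t$) as a matrix trace, and the leading $1$ accounting for the trivial summand $k$ in $k[\PP]=k\oplus Y_\PP$. Since $\rank_p(C_{m,1})=\dim\CC_m$, this is the asserted formula.
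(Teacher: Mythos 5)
First, a point of order: the paper does not prove this theorem --- it is quoted from \cite{CSX2} in Section~\ref{examples} purely for comparison with Theorem~\ref{rankthm} --- so there is no in-paper proof to measure you against. Your outline does follow the general strategy one would expect (reduce to $\dim\CC_m$, use the $\GL(V)$-composition series of \cite{bardoe-sin} indexed by $\HH$-types, and compute the contribution of each digit), and you have correctly located the crux: the constant $p^m$ in the corner entry should arise from a duality operator on the self-dual graded piece $S^{m(p-1)}$, and the unsigned count of monomials $x^ay^b$ fixed by $a_i\mapsto p-1-b_i$, $b_i\mapsto p-1-a_i$ in that degree is indeed $p^m$.

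However, at least two steps are asserted where real work is needed, so the proposal has genuine gaps. (1) ``By semisimplicity, no level truncation survives'' is not a valid inference: semisimplicity of $S^{\lambda}$ as a $k\Sp(V)$-module only tells you that the submodule generated by a vector is the sum of the irreducible summands on which that vector projects nontrivially. You must still prove that the relevant digit of the generator produces \emph{all} of $S^{\lambda}$ when $\lambda\neq m(p-1)$, and the entire $+1$-eigenspace (which could itself be reducible) when $\lambda=m(p-1)$; non-self-duality of the degree supplies none of this. (2) The digit-by-digit factorization of the $\Sp(V)$-submodule is exactly the hard part of this kind of argument: $\CC_m$ is not a $\GL(V)$-submodule, and its trace on a composition factor $L(\sss)\cong S^{\lambda_0}\otimes(S^{\lambda_1})^{(p)}\otimes\cdots$ is not a priori a tensor product of digit-wise submodules. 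In the present paper this is precisely what the carry analysis of \S\ref{subs}, the projectors of Lemma~\ref{project}, and Lemmas~\ref{permutesubscripts}--\ref{HH-down} exist to control, and an odd-characteristic analogue of that machinery (or a genuinely different argument) must be supplied, not just invoked. Finally, in odd characteristic the isomorphism $V\cong V^*$, $v\mapsto b(v,\cdot)$, introduces signs (it sends $f_i$ to $-x_i$), so the trace of the duality operator on $S^{m(p-1)}$ is a \emph{signed} fixed-point count, its being an involution at all needs checking, and identifying which eigenspace contains the relevant component of $\chi_{W_0}$ requires an actual computation; you flag this as the delicate step but do not carry it out. In short, the skeleton is plausible and correctly aimed, but as written this is an outline with several unproved load-bearing claims rather than a proof.
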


Note that when $p=2$, the formula for $d_\lambda$ simplifies to
${2m\choose\lambda}$, and the matrix entry $a_{i,j}$ (for $r=m$) is
$d_\lambda-{2m\choose2j+i-2m-2}$.  The explanation that we get the
same formula for the $p=2$ and $p$ odd cases when $m=2$ is then that
the second term (i.e., ${2m\choose2j+i-2m-2}$) vanishes except when
$i=j=2$, in which case it is $1$, and $a_{2,2}=a'_{2,2}=5$.

We now compare the matrices $A$ and $A'$ when $m=3$.  Using the formulas, we get
$$A=\left(
\begin{array}{ccc}
6 &20&6\\1&15&14\\0&6&14\end{array}\right)\quad
A'=\left(
\begin{array}{ccc}
6 &20&6\\1&15&15\\0&6&14\end{array}\right).$$
Since the two matrices differ only off the diagonal, the actual rank will be less than what is given by the
$p$-odd model when $t>1$.  In fact, the eigenvalues of $A$ are $\alpha_1=8,\
\alpha_2=\frac{27}2+\frac{\sqrt{473}}2,$ and $\alpha_3=\frac{27}2-\frac{\sqrt{473}}2$, and the rank formula
may
be given as
$$\rank_2(B_{3,1}(t))=1+{\rm Trace}(A^t)=1+\alpha_1^t+\alpha_2^t+\alpha_3^t.$$
The $2$-ranks of $B_{3,1}(1),\ B_{3,1}(2),$ and $B_{3,1}(3)$ are $36,\ 666,$ and $15012,$ respectively.
The expressions for the eigenvalues of $A'$ are not so simple, and we do not reproduce them here.

When $m>3$, we note that $a_{m-1,m-1}<a'_{m-1,m-1}$.  Thus the actual $2$-rank of $B_{m,1}$ is less than
that obtained using the $p$-odd model, even when $q=2$.

We can also note that $a_{m,m}\le a'_{m,m}$.  The reason is that each function of class
$(\rho,0,2m-2\rho,0),\ \rho>0,$ is the sum of $2^{\rho-1}$  basis functions
of the $p$-odd model.  These
have the form
$(\prod_{i\in R}x_iy_i+\prod_{j\in R'}x_jy_j)\prod_{k\in T}z_k$, where $z_k=x_k$ or $z_k=y_k$, and $R$ and
$R'$ can be chosen $2^{\rho-1}$ different ways, fixing $r_1$ and $r'_1$, but interchanging $r_\gamma$ and
$r'_\gamma$ at will, for $2\le\gamma\le\rho$.  For $\rho=0$, the basis functions in the two models are
identical.

%%%%%%%%%%%%%%%%%%%%%%%%%%%%%%%%%%%%%%%%%%%%%%%%%%%%%%%%%%%%%%%%

\end{document}